 \newcolumntype{d}[1]{D{:=}{\cdot}{#1} }
\newtheorem{theoremx}{Theorem}
\newtheorem{theorem}{Theorem}[section]
\newtheorem{corollaryx}[theoremx]{Corollary}
\newtheorem{lemma}[theorem]{Lemma}
\newtheorem{proposition}[theorem]{Proposition}
\def\D{{\mathfrak D}}
\def\p{{\mathfrak p}}
\def\A{{\mathbb A}}
\DeclareMathOperator{\Hom}{Hom}
\DeclareMathOperator{\Aut}{Aut}
\DeclareMathOperator{\SL}{SL}
\DeclareMathOperator{\GL}{GL}
\DeclareMathOperator{\disc}{disc}
\DeclareMathOperator{\Res}{Res}
\DeclareMathOperator{\Sym}{Sym}
\DeclareMathOperator{\Stab}{Stab}
\def\uG{{\underline{G}}}
\def\uV{{\underline{V}}}
\def\disc{{\rm disc}}
\def\Disc{{\rm Disc }}
\def\dim{{\rm dim}}
\def\Aut{{\rm Aut}}
\def\gen{{\rm gen}}
\def\ngen{{\rm ngen}}
\def\Vol{{\rm Vol}}
\def\R{{\mathbb R}}
\def\F{{\mathbb F}}
\def\FF{{\mathcal F}}
\def\PP{{\mathcal P}}
\def\Q{{\mathbb Q}}
\def\K{{F}}
\def\J{{\mathcal J}}
\def\C{{\mathcal C}}
\def\W{{\mathcal W}}
\def\Z{{\mathbb Z}}
\def\P{{\mathbb P}}
\def\F{{\mathbb F}}
\def\FF{{\mathcal F}}
\def\Q{{\mathbb Q}}
\def\C{{\mathbb C}}
\def\A{{\mathbb A}}
\def\GG{{\mathbb G}}
\def\L{{\mathcal L}}
\def\max{{\rm max}}
\def\Cl{{\rm Cl}}
\def\var{{\rm Var}}
\newcommand{\w}[1]{\widetilde{#1}}
\def\cO{{\mathcal O}}
\def\cl{{\rm cl}}
\def\fP{{\mathfrak p}}
\def\sc{{\mathcal C}}
\def\bG{{\mathbb G}}
\DeclareMathOperator{\proj}{proj}
\DeclareMathOperator{\diag}{diag}
\title{Geometry-of-numbers methods over global fields I:
  Prehomogeneous vector spaces} \author{Manjul Bhargava, Arul Shankar,
  and Xiaoheng Wang}
\begin{document}
\maketitle
\begin{abstract}We develop geometry-of-numbers methods to count orbits
  in prehomogeneous vector spaces having bounded invariants over any
  global field.   As our primary example, we apply these techniques to
  determine the density of discriminants of field extensions of degree
  at most 5 over any base global field $F$.
\end{abstract}

\section{Introduction}\label{sec:intro}

In recent years, there have been a number of problems in arithmetic
statistics that have been solved by developing suitable
geometry-of-numbers techniques to count integral orbits of a
representation of an algebraic group on a vector space defined over
$\Z$.  These methods have been applied, e.g., to determine the
densities of discriminants of number field extensions of $\Q$ of small
degree, to establish new cases of the Cohen--Lenstra--Martinet
heuristics for class groups, to bound the average rank of elliptic
curves over $\Q$, and to show that most hyperelliptic curves over $\Q$
have few rational points (see, e.g.,
\cite{hasse}--\cite{BV2}, \cite{KL}, \cite{PS2}, \cite{SW}, \cite{Th} for these and
further examples).  The representations that have so far been the most
useful for the purpose fall into two categories, namely,
prehomogeneous representations having one relative invariant, and
coregular representations having at least two relative invariants,
with the former parametrizing primarily field extensions and related
objects, and the latter parametrizing primarily algebraic curves
together with additional data.

In this series of two articles, our purpose is to generalize the
aforementioned counting methods from the geometry-of-numbers so that
they may be applied over an arbitrary global number or function field.
In this first article, we concentrate on the case of counting integral
orbits in a prehomogeneous vector space.  Our primary application is
the determination of the density
of discriminants of field extensions of degree less than or equal to 5
of any given global field.

Let $F$ denote a global number field or function field. A fundamental
problem in arithmetic statistics is to determine the density of
discriminants of field extensions of $F$ having a fixed degree $n$
over $F$.  The case of $n=2$ when $F=\Q$ (and the case of $n=1$ for
all $F$!) are trivial.  When $n=3$, even the case $F=\Q$ is highly
nontrivial and is a celebrated result of Davenport--Heilbronn
\cite{DH}.  Surprisingly, even the case $n=2$ is nontrivial for more
general $F$ and is a result of Datskovsky--Wright \cite{DW}, who also
settle the case of $n=3$ for general global fields $F$ having
characteristic not $2$ or $3$. The cases $n=4$ and $5$ for $F=\Q$ were
carried out in \cite{dodqf,dodpf}.

If $L$ is any field extension of $F$ of finite degree, then the norm of the relative discriminant of $L$ over $F$ is given by the
local product
\begin{equation}\label{eqnormdisc}
N(\Disc(L/F)):=\prod_{\fP\notin M_\infty}\prod_{w\mid \fP} |\Disc(L_w/F_\fP)|_\fP^{-1},
\end{equation}
where $M_\infty$ denotes the set of archimedean places of $F$,
$|\cdot|_\fP$ denotes the normalized absolute value at~$\fP$, and
$F_\fP$ and $L_w$ denote the completion of $F$ at $\fP$
and the completion of $L$ at $w$, respectively. When $F$ is a function field,
$M_\infty$ is empty.

Our main theorem determines the number of isomorphism classes of field
extensions $L$ of $F$ of degree at most 5 having bounded relative
discriminant:

\begin{theoremx}\label{thmainfield}
  Let $F$ be a global number or function field of any characteristic. For any $n=2$, $3$, $4$ or $5$, we let $N_n(F,X)$ denote the number
  of isomorphism classes of degree-$n$ field extensions $L$ of $F$
  weighted by $(\#\Aut(L/F))^{-1}$, whose normal closure over $F$ has
  full Galois group $S_n$ such that the norm of the relative
  discriminant of $L$ over $F$ is at most $X$ if $F$ is a number
  field and is equal to~$X$ if $F$ is a function field. Let $q(n,k)$
  denote the number of partitions of~$n$ into at most $k$ parts.
\begin{itemize}
\item[{\rm (a)}] If $F$ is a number field with $r_1$ real embeddings
  and $2r_2$ complex embeddings, then
\begin{equation}\label{eq:numberfieldcount}
\lim_{X\rightarrow\infty}\frac{N_n(F,X)}{X} = \frac12\,
%\Res_{s=1}
\underset{s=1}{\operatorname{Res\,}}
\zeta_F(s) \!\left(\frac{\#S_n[2]}{n!}\right)^{r_1}
\!\!\left(\frac{1}{n!}\right)^{r_2}
\!\!\prod_{\fP\notin M_\infty}
\!\!\left(\sum_{k=0}^n\frac{q(k,n-k)-q(k-1,n-k+1)}{\mbox{\bf N} \,\fP^k}\right).
\end{equation}
\item[{\rm (b)}] If $F$ is a function field with field of constants $\F_q$, then
\begin{equation}\label{eq:functionfieldcount}
\lim_{m\rightarrow\infty}\frac{N_n(F,q^{2m})}{q^{2m}} = (\log q)\,
%\Res_{s=1}
\underset{s=1}{\operatorname{Res\,}}
\zeta_F(s)\prod_{\fP} \left(\sum_{k=0}^n\frac{q(k,n-k)-q(k-1,n-k+1)}{\mbox{\bf N} \,\fP^k}\right).
\end{equation}
\end{itemize}
When $F$ is a function field, $X$ only runs through the possible norms of relative discriminants in the above limit.
\end{theoremx}
\noindent
Here $S_n[2]$ denotes the number of elements of order dividing 2 in
the symmetric group on $n$ letters.  We remark that the weight
$(\#\Aut(L/F))^{-1}$ is equal to~$1$ for $n\geq3$ and is $1/2$ for
$n=2$. The inclusion of these weights yields a statement of
Theorem~\ref{thmainfield} that is uniform in $n$. We note that Theorem \ref{thmainfield} is new even for $n=2$ and $n = 3$ when $F$ has characteristic dividing $6$. It is new for $n = 4$ and $n = 5$ for all $F\neq\Q$.

We note that Theorem~\ref{thmainfield}(a) proves
\cite[Conjecture~A]{Bmass} for $n\leq 5$.  We conjecture that both
parts of Theorem~\ref{thmainfield} hold for all $n$.

For a global field $K$, one may define its {\it absolute
  discriminant} $D_K$ as follows. If $K$ is a number field, then $D_K$
is the absolute value of the discriminant of $K$ viewed as an
extension of $\Q$. If $K$ is the function field of a smooth projective
and geometrically connected curve $\sc$ over $\F_q$, then
$D_K=q^{2g-2}$ where
%$q_K$ is the cardinality of the field of
%constants of $K$
$g$ is the genus of $\sc$.
The norm of the
relative discriminant of $L$ over $F$ is related to the absolute
discriminants of $L$ and $F$ by the following formula (\cite{JP}):
$$D_L/D_F^{[L:F]} = N(\Disc(L/F)).$$ Therefore, the version of
Theorem~\ref{thmainfield} where we instead order degree-$n$ extensions
of $F$ by absolute discriminant is a consequence of
Theorem~\ref{thmainfield}.

We actually prove much more general versions of
Theorem~\ref{thmainfield}.  First, we count field extensions of~$F$ of
degree $n$ satisfying any collection of local conditions at finitely
many places. In fact, we will even allow for certain collections of
local conditions at infinitely many places. For each place $\fP$ of $F$, let $\Sigma_\fP$ be a set of isomorphism
classes of \'etale algebras of degree $n$ over~$F_\fP.$ We say that
the collection $(\Sigma_\fP)$ is {\it acceptable} if, for all but
finitely many $\fP$, the set $\Sigma_\fP$ contains all \'etale
algebras of degree $n$ over $F_\fP$ that are unramified or have
splitting type $(1^2\tau)$ where $\tau$ is an unramified splitting
type of dimension $n-2$. Note when the characteristic of $F$ is not
$2$, one of these two splitting possibilities occurs if the discriminant is squarefree. We prove the following asymptotic formula for the number of isomorphism classes of extensions $L$ over $F$
of degree $n$ of bounded relative discriminant whose local
specifications lie in $(\Sigma_\fP)$, i.e., $L\otimes
F_\fP\in\Sigma_\fP$ for all $\fP$.

\begin{theoremx}\label{thmainfield2}
Let $F$ be a global number or function field of any characteristic. Let $n=2$, $3$, $4$, or~$5$.  Let $\Sigma=(\Sigma_\fP)$ be an acceptable
collection of local specifications for degree-$n$ extensions of
$F$. Let $N_{n,\Sigma}(F,X)$ denote the number of degree-$n$ field
extensions $L$ with local specifications in $\Sigma$, weighted by
$(\#\Aut(L/F))^{-1}$, whose normal closure over $F$ has full Galois
group $S_n$, such that the norm of the relative discriminant of
$L$ over $F$ is at most $X$ if $F$ is a number field and is equal
to~$X$ if $F$ is a function field. Then
\begin{equation}\label{eq:combinedcount}
\lim_{X\rightarrow\infty} \frac{N_{n,\Sigma}(F,X)}{X} = c\,
%\Res_{s=1}
\underset{s=1}{\operatorname{Res\,}}
\zeta_{F}(s)\prod_\fP m_{\fP}(\Sigma_\fP),
\end{equation}
where the constant $c$ is $\frac12$ if $F$ is a number field and $\log
q$ if $F$ is a function field, and where
\[
m_{\fP}(\Sigma_\fP) = \begin{cases}
\displaystyle\frac{N\fP-1}{N\fP} \sum_{K\in\Sigma_\fP}  \frac{|\Disc(K/F_\fP)|_\fP}{\#\Aut(K/F_\fP)} & \text{if }\fP\notin  M_\infty,\\[.2in]
\displaystyle\sum_{K\in\Sigma_\fP}  \frac{1}{\#\Aut(K/F_\fP)} & \text{if }\fP\in M_\infty.
\end{cases}
\]
When $F$ is a function field, $X$ only runs through the possible norms
of relative discriminants in the above limit.
\end{theoremx}
\noindent
If, for every $\fP$, the set $\Sigma_\fP$ consists of all \'etale
extensions of degree $n$ of $F_\fP$, then the local masses
$m_\fP=m_{\fP}(\Sigma_\fP)$ have been computed in \cite{Bmass}, and
Theorem~\ref{thmainfield2} reduces to Theorem~\ref{thmainfield}. Thus
Theorem~\ref{thmainfield2} gives an interpretation of the constants
appearing in the asymptotics in Theorem~\ref{thmainfield} in terms of
local masses.  Theorem~\ref{thmainfield2} also yields the density of
discriminants of degree-$n$ extensions of $F$ having squarefree relative discriminant.

Theorem \ref{thmainfield2} can be used to prove a complement to the
Chebotarev density theorem. If $F$ is a global field, and $L$ is an
$S_n$-extension of $F$ unramified at a finite prime $\fP$ of $F$, then
the Artin symbol at $p$ for the Galois closure of $L$ over $F$ is
defined as a conjugacy class in $S_n$. The Chebotarev density theorem
asserts that for fixed $L$ and varying $\fP$, the value of the
corresponding Artin symbol is equidistributed amongst the conjugacy
classes of $S_n$, where each class is weighted by its size. We prove
the analogous result for fixed $\fP$ and varying $L$.
\begin{theoremx}
    Let $F$ be a global number or function field of any characteristic and let $\fP$ be a finite prime of $F$.
    For $n=2$, $3$, $4$, or $5$, let $\Sigma$ be an
    acceptable collection of local specifications for degree-$n$
    extensions of $F$ such that $\Sigma_\fP$ consists of all
    unramified degree-$n$ \'etale extensions of $F_\fP$. Then, as $L$
    varies over degree-$n$ field extensions with local specifications
    in $\Sigma$ whose Galois closures over $F$ have Galois group $S_n$,
    the corresponding Artin symbol at $\fP$ is equidistributed across
    the conjugacy classes of $S_n$ where each conjugacy class is
    weighted by its size.
\end{theoremx}

We prove a further generalization of Theorem~\ref{thmainfield2}.  Let
$\Sigma$ denote again any acceptable collection of local
specifications of degree-$n$ extensions of $F$, and let $S$ be any
nonempty finite set of places of~$F$ containing~$M_\infty$. Define the
{relative $S$-discriminant} $\Disc_S(L/F)$ of an extension $L$ of $F$
in the usual way. Then the norm of $\Disc_S(L/F)$ is given by
\begin{equation}\label{eqnormdiscS}
N(\Disc_S(L/F)):=\prod_{\fP\notin S}\prod_{w\mid \fP} |\Disc(L_w/F_\fP)|_\fP^{-1}.
\end{equation}
We prove:
%We prove the following theorem counting the asymptotic number of
%field extensions of $F$ in $\Sigma$ having bounded relative
%$S$-discriminant.

\begin{theoremx}\label{thmainfieldS}
Let $F$ be a global number or function field of any characteristic. Let $n=2$, $3$, $4$, or $5$.  Let $\Sigma=(\Sigma_\fP)$ be an acceptable
collection of local specifications for degree-$n$ extensions of
$F$. Let $S$ be a nonempty finite set of places of $F$ containing
$M_\infty$. Let $N_{n,\Sigma,S}(F,X)$ denote the number of degree-$n$
field extensions $L$ with local specifications in $\Sigma$, weighted
by $(\#\Aut(L/F))^{-1}$, whose normal closure over $F$ has full Galois
group~$S_n$, such that the norm of the relative $S$-discriminant of
$L$ over $F$ is at most $X$ if $F$ is a number field, and is equal
to~$X$ if $F$ is a function field. Then
\begin{equation}\label{eq:combinedcount2}
\lim_{X\rightarrow\infty} \frac{N_{n,\Sigma,S}(F,X)}{X} = c\,
%\Res_{s=1}
\underset{s=1}{\operatorname{Res\,}}
\zeta_{F,S}(s)\prod_\fP m_{\fP,S}(\Sigma_\fP),
\end{equation}
where $\zeta_{F,S}(s)=\prod_{\fP\notin S}(1 - (N\fP)^{-s})^{-1}$ is
the partial zeta function and the constant $c$ is $\frac12$ if $F$ is
a number field and $\log q$ if $F$ is a function field, and where
\begin{equation}\label{eq:localmassold}
m_{\fP,S}(\Sigma_\fP) = \begin{cases}
\displaystyle\frac{N\fP-1}{N\fP} \sum_{K\in\Sigma_\fP}  \frac{|\Disc(K/F_\fP)|_\fP}{\#\Aut(K/F_\fP)} & \text{if }\fP\notin  S,\\[.2in]
\displaystyle\sum_{K\in \Sigma_\fP}  \frac{1}{\#\Aut(K/F_\fP)} & \text{if }\fP\in S.
\end{cases}
\end{equation}
When $F$ is a function field, $X$ only runs through the possible norms
of relative $S$-discriminants in the above limit.
\end{theoremx}

%The local masses $m_\fP$ have been computed in \cite{Bmass} for
%$\fP\notin S$.
We will use Theorem~\ref{thmainfieldS} to prove
Theorem~\ref{thmainfield2} (and hence Theorem~\ref{thmainfield}).
When $F$ is a number field and $S=M_\infty$, Theorem
\ref{thmainfieldS} reduces to Theorem~\ref{thmainfield2}. When $F$ is
the function field of a smooth projective and geometrically connected
curve $\sc$ over $\F_q$, we set $S$ to be any nonempty finite set of
closed points and use Theorem~\ref{thmainfieldS} to count the number
of degree-$n$ extensions~$L$, or equivalently, degree-$n$ covers
$\sc'$ of $\sc$, having a fixed genus and prescribed
splitting/ramification behavior at the chosen points; we then sum over
all the possible splitting/ramification behaviors at these points to
obtain Theorem~\ref{thmainfield2}.
%\add{It is worth noting that the
%local masses $m_\fP$ at places $\fP\in S$ don't need to be
%computed. (I think this line is confusing.)}
  Theorem~\ref{thmainfieldS}, with $S$ taken to be the union of
  $M_\infty$ and all the places above $2$, also allows a new, clean
  proof of Theorems~\ref{thmainfield} and \ref{thmainfield2} via
  geometry of numbers when $F$ is a number field and $n=2$.

Let $\cO_S$ denote the ring of $S$-integers in $F$. In the case of
function fields, $\cO_S$ is the ring of regular functions on the
affine curve obtained by removing the closed points in $S$. For any
degree-$n$ extension $L$ of $F$, the integral closure $\cO_{L,S}$ of
$\cO_S$ in $L$ is a projective module over $\cO_S$ of rank $n$. The
structure theory of projective modules over a Dedekind domain
\cite[Theorem 1.6]{Mlnr} states that $\cO_{L,S}\simeq \cO_S^{n-1}\times
I$ as $\cO_S$-modules for some fractional ideal $I$ of $\cO_S$. The
ideal class of $I$ is an invariant of $L$, called the {\it $S$-Steinitz class} of $L$. As a further
byproduct of the proofs of Theorems \ref{thmainfield} and
\ref{thmainfieldS}, we obtain the following equidistribution result.

\begin{theoremx}\label{thm:equidistribution}
Let $F$ be a global number or function field of any characteristic. Let $S$ be a nonempty finite set places of
$F$ containing $M_\infty$. For any $n=2$, $3$, $4$ or $5$, let
$\Sigma$ be an acceptable collection of local
specifications for degree-$n$ extensions of $F$. Then the $S$-Steinitz
classes of degree-$n$ field extensions of $F$ with local
specifications in $\Sigma$, whose normal closure has full Galois group
$S_n$, are equidistributed in the class group of $\cO_S$.
\end{theoremx}
\noindent
The result above was proven in the cases $n=2$ or $3$ and $S=M_\infty$
for the full family of degree-$n$ field extensions of $F$, when the characteristic of $F$ does not divide $6$, by Kable and
Wright~\cite{KW}.

The cases $n=3$ and $n=4$ of Theorems \ref{thmainfield} and
\ref{thmainfield2} yield results on the average sizes of the
$3$-torsion subgroups of the relative class groups of quadratic
extensions of $F$ and the $2$-torsion subgroups of the relative class
groups of cubic extensions of $F$. 

For a finite extension $L/F$,
recall that the relative class group $\Cl(L/F)$ is the kernel of the
relative norm map $N_{L/F}:\Cl(L)\to\Cl(F)$ from the class group of
$L$ to the class group of $F$. Taking the kernel of the relative norm map between the narrow class groups of $L$ and $F$
yields the relative narrow class group $\Cl^+(L/F)$. For a prime $p$,
we let $h_p(L/F)$ and $h_p^+(L/F)$ denote the sizes of the $p$-torsion
subgroups of $\Cl(L/F)$ and $\Cl^+(L/F)$, respectively. 

To state
these theorems, we need to introduce some additional notation. Let $F$
be a global field, and let $\Sigma=(\Sigma_\fP)$ be an acceptable set of
local specifications for degree-$n$ extensions of $F$. We say that
$\Sigma$ is {\it archimedeally pure} if for every $\fP\in M_\infty$,
the set $\Sigma_\fP$ consists of a single element. For $n=2$ and
$n=3$, we define the quantities $\alpha_n(\Sigma)$ as follows:
\begin{equation}\label{eqalphapure}
  \alpha_2(\Sigma):=\#\{\fP:F_\fP=\R;\,\Sigma_\fP=\{\R^2\}\};\;\;\;
  \alpha_3(\Sigma):=\#\{\fP:F_\fP=\R;\,\Sigma_\fP=\{\R^3\}\}.
\end{equation}
If $F$ is a function field, then $\alpha_n(\Sigma)=0$. We prove the following theorem:
\begin{theoremx}\label{thclassgps}
Let $n=2$ or $3$, and let $F$ be a fixed global field of any characteristic. If $F$ is a
number field, we denote the number of real completions of $F$ by $r_1$
and the number of conjugate pairs of complex completions of $F$ by
$r_2$. If $F$ is a function field, we set $r_1=r_2=0$. Let
$\Sigma$ be an acceptable archimedially pure collection of
local specifications for degree-$n$ extensions of $F$. Let
$S_{n,\Sigma}(F,X)$ denote the set of degree-$n$ field extensions $L$
with local specifications in $\Sigma$, whose normal closure over $F$
has full Galois group $S_n$, such that the norm of the relative
discriminant of $L$ over $F$ is at most $X$ if $F$ is a number field,
and is equal to~$X$ if $F$ is a function field. Then
\begin{itemize}
  \item[{\rm (a)}] If $n=2$, then
\begin{equation*}
  \displaystyle\lim_{X\to\infty}\frac{\displaystyle\sum_{L\in S_{2,\Sigma}(F,X)}h_3(L/F)}
                   {\displaystyle\sum_{L\in S_{2,\Sigma}(F,X)}1}\;\;=\;\;1+3^{-r_2-\alpha_2(\Sigma)\phantom{-r_1}}.
\end{equation*}
  \item[{\rm (b)}] If $n=3$, then
\begin{equation*}
  \begin{array}{rcl}
  \displaystyle\lim_{X\to\infty}\frac{\displaystyle\sum_{L\in S_{3,\Sigma}(F,X)}h_2(L/F)}
                   {\displaystyle\sum_{L\in S_{3,\Sigma}(F,X)}1}&=&1+2^{-r_1-2r_2-\alpha_3(\Sigma)};\\[.5in]
  \displaystyle\lim_{X\to\infty}\frac{\displaystyle\sum_{L\in S_{3,\Sigma}(F,X)}h^+_2(L/F)}
                   {\displaystyle\sum_{L\in S_{3,\Sigma}(F,X)}1}&=&1+2^{-r_1-2r_2+\alpha_3(\Sigma)}.
  \end{array}
\end{equation*}
\end{itemize}
\end{theoremx}
When $F=\Q$, Part (a) of the above theorem was a result of
Davenport--Heilbronn \cite{DH} while Part (b) was proved in
\cite{dodqf}. For general global fields $F$ having characteristic not
2 or 3, Part (a), for the full family of quadratic extensions of $F$, is due to
Datskovsky--Wright \cite{DW}. The above result has the following immediate consequence.
\begin{corollaryx}
Let $F$ be a number field, and let notation be as in Theorem \ref{thclassgps}. Then as $L$ varies over $S_{3,\Sigma}(F,X)$, a positive proportion of the relative class numbers of $L$ over $F$ are odd.
\end{corollaryx}

Next, we explore the average number of certain unramified {\em
  nonabelian} extensions of quadratic extensions of a global field
$F$. More precisely, given finite groups $G\subset G'$ and a quadratic
extension $L$ of $F$, we say that $K$ is a $(G,G')$-extension of $L$
if $K$ is Galois over $L$ with Galois group $G$, and the Galois
closure of $K$ over $F$ has Galois group $G'$. The average number of
$(G,G')$-extensions of quadratic fields over $\Q$, for
$(G,G')=(A_n,S_n)$, $(G,G')=(S_n,S_n\times C_2)$ for $n=3,4,5$, were
determined in \cite{Bgeosieve}. Here, we prove the analogous results
for acceptable families of quadratic extensions of global fields $F$.
\begin{theoremx}\label{thurna}
  Let $F$ be a fixed global field. If $F$ is a number field, we denote
  the number of real completions of $F$ by $r_1$ and the number of
  conjugate pairs of complex completions of $F$ by $r_2$. If $F$ is a
  function field, we set $r_1=r_2=0$. Let $\Sigma$ be an
  acceptable archimedially pure collection of local specifications for
  quadratic extensions of $F$. For $n=3$, $4$, or $5$, let
  $E_\Sigma(G,G')$ denote the average number of unramified
  $(G,G')$-extensions of quadratic extensions $L$ of $F$ having local
  specifications in~$\Sigma$, where these fields $L$ are ordered by
  the norms of their relative discriminants over $F$. Then
  \begin{equation*}
    \begin{array}{rclcl}
      &{\rm (a)}& \displaystyle E_\Sigma(A_n,S_n)&=&\displaystyle\frac{1}{2}\cdot
      \displaystyle\Bigl(\frac{2}{n!}\Bigr)^{r_2+\alpha_2(\Sigma)}\cdot
      \displaystyle\Bigl(\frac{1}{(n-2)!}\Bigr)^{r_1-\alpha_2(\Sigma)};\\[.2in]
      &{\rm (b)}& E_\Sigma(S_n,S_n\times C_2)&=&\infty.
    \end{array}
  \end{equation*}
\end{theoremx}
Note that the average values in Theorems \ref{thclassgps} and
\ref{thurna} do not change with local conditions imposed at
finite places. The reason we have restricted to archimedially pure
local specifications is that the averages do depend on local
specifications at infinite places.

Finally, we consider the following question, settled in the case $n=3$ by work of Wood \cite{WoodFinite}. Let $n$ be a positive integer, let $C$ be a smooth curve over $\F_q$ with characteristic greater than $n$, and let $z\in C(\F_q)$ be fixed. In \cite[Conjecture 3.2]{WoodFinite}, Wood predicts the expected number of points in the fiber over $z$ of a random curve over $\F_q$ with a degree-$n$ map to $C$ with full monodromy, as the genus goes to infinity. Wood proves that this conjecture follows from \cite[Conjecture 5.1]{Bmass}. Since this latter conjecture for $n=2$, $3$, $4$ or $5$ and arbitrary characteristic follows from Theorem \ref{thmainfield2}, we have the following unconditional result.

\begin{theoremx}\label{thfinpts}
Let $n=2$, $3$, $4$ or $5$, and let $C$ be a smooth curve over $\F_q$. Fix a point $z\in C(\F_q)$. Then the expected number of points in the fiber over $z$ of a random curve over $\F_q$
with a degree $n$ map to $C$ and full monodromy $($as the genus goes to infinity$)$ is
\begin{equation*}
\begin{array}{ccc}
    1&\mbox{ if }&n=2,\\ [.075in]
    1+\frac{q}{q^2+q+1}&\mbox{ if }&n=3,\\ [.05in]
    1+\frac{q^2+q}{q^3+q^2+2q+1}&\mbox{ if }&n=4,\\ [.05in]
    1+\frac{q^3+2q^2+2q}{q^4+q^3+2q^2+2q+1}&\mbox{ if }&n=5.
\end{array}
\end{equation*}
\end{theoremx}
\medskip

\noindent\textbf{Methods.} Our proofs of Theorems \ref{thmainfield}--\ref{thfinpts} extend the geometry-of-numbers techniques
developed in \cite{DH,dodqf,dodpf} so that they can be applied over
arbitrary global fields. More precisely, we consider prehomogeneous
representations $V_n$ of split (over $\Z$) reductive groups $G_n$ for
$2\leq n\leq 5$ such that the $G_n(F)$-orbits of $V_n(F)$ of nonzero
discriminant parametrize degree-$n$ \'etale extensions of $F$, with the exception in the case $n=2$ and $F$ has characteristic $2$ where the group $G_2$ is non-reductive. These
representations first arose in a unified context in the work of
Sato--Kimura~\cite{SK}, and the connection with field extensions was
first studied systematically in the work of Wright--Yukie~\cite{WY}.
The integral orbits of these representations were classified in
\cite{DF,BII,BIII}, and the rational orbits were then counted using
this classification of integral orbits via suitable
geometry-of-numbers arguments in \cite{DH,dodqf,dodpf}.  In this
paper, it is these latter parametrization and counting methods that we
aim to extend to general base fields.

One key difference in working over an arbitrary global field $F$
rather than $\Q$ is that not all $\cO_S$-modules are free. In these
cases, it is no longer sufficient to consider just the $S$-integral
orbits $G_n(\cO_S)\backslash V_n(\cO_S)$ to parametrize degree-$n$
extensions of $\cO_S$.  The parametrization of ring extensions of
degrees up to $4$ over an arbitrary base is due to Wood
\cite{WoodQuartic,WoodThesis}, and this parametrization could
be used to prove our main theorems in these cases. However, we use a
different approach and instead consider, for each ideal class $\beta$
of $\cO_S$, orbits $\Gamma_\beta\backslash \L_\beta^{\rm max}$, where
$\Gamma_\beta$ is a subgroup of $G_n(F)$ commensurable with
$G_n(\cO_S)$ and $\L_\beta^{\rm max}$ is a $\Gamma_\beta$-invariant
subset of an additive subgroup $\L_\beta$ of $V_n(F)$ commensurable
with $V_n(\cO_S)$. The subset $\L_\beta^{\rm max}$ of $\L_\beta$ is
defined by congruence conditions at every prime $\fP$ of
$\cO_S$. These $\Gamma_\beta$-orbits in $\L_\beta^{\rm max}$ are then
shown to correspond to $S_n$-extensions whose $S$-Steinitz class is
$\beta$. We then sum over $\beta$. Our approach works uniformly for all degrees $n\leq 5$.

To count the number of $\Gamma_\beta$-orbits in $\L_\beta$, we view
$\L_\beta$ as a lattice in $V_n(F_{S})$ where $F_S=\prod_{\fP\in S}
F_\fP$. We construct a fundamental domain $R$ for the action of
$\Gamma_\beta$ on $V_n(F_{S})$ so that the set of $\L_\beta$-points
in this fundamental domain is in bijection with the set of
$\Gamma_\beta$-orbits in $\L_\beta$. We then wish to obtain
asymptotics for the number of these orbits of bounded size by
computing the volume of the subset of the fundamental domain having
bounded size; here by ``size'', we mean the norm of the relative $S$-discriminant of the corresponding degree-$n$ field extension. This is much easier to carry out when the relevant subset of the
fundamental domain is bounded, which is true only in the case
$n=2$. When $n=3,4,5$, the fundamental domains contain cusps that go
off to infinity and we show that the number of points in these cusps
corresponding to $S_n$-extensions of $F$ is negligible. When $n=3$, it
turns out that no points in the cusp region correspond to
$S_n$-extensions. When $n=4$, there are two cuspidal regions that
contain points corresponding to $S_n$-extensions, and when $n=5$,
there are hundreds of such cusps! We reduce this cusp analysis to
the verification of certain combinatorial conditions on the characters
of the maximal torus of the derived subgroup of $G_n$ that we prove
hold over a general global field if and only if they hold over $\Q$
(using the fact that $G_n$ is split over $\Z$).  Since these
conditions have been checked over $\Q$ in \cite{DH,dodqf,dodpf}, it
follows that these conditions hold over arbitrary global fields.

To obtain the number of $\Gamma_\beta$-orbits in $\L_\beta^{\rm max}$
from the number of $\Gamma_\beta$-orbits in $\L_\beta$, we impose
congruence conditions at every prime ideal $\fP$ of $\cO_S$ by
multiplying together density functions at each $\fP$. Since an
infinite number of conditions are imposed, a uniformity estimate on
the error term is required. For $n=2$, this is elementary. For
$n=3,4,5$, such a uniformity estimate over $\Q$ has been established
in~\cite{Bgeosieve} and, as we will see, only some mild modifications
are required to generalize this estimate to arbitrary global fields.

To finish the proof, we must compute the volumes of some subsets of $V_n(F_{S})$
and $V_n(F_\fP)$ for $\fP\notin S$. We use a Jacobian change-of-variable formula to transfer these volume computations to $G_n$. Once
we multiply all the local volumes together and sum over all the
$S$-Steinitz classes, we find that we obtain essentially the Tamagawa
number of $G_n$, which in all our cases turns out to be $1$.  This
completes the proof of Theorem~\ref{thmainfieldS}, and thus also of
Theorems~\ref{thmainfield} and \ref{thmainfield2}.

\medskip 

This article is organized as follows. In Section \ref{sec:prelim}, we
set up some notations, recall the formula for the local masses
$m_\fP$, and define the Tamagawa number of a reductive
group over a global field. In Section \ref{sec:param}, we give the
representations $(G_n,V_n)$ for $n=2,3,4,5$ and describe the
parametrization of $S_n$-extensions via rational orbits and
$S$-integral orbits. In Section~\ref{sec:count}, we count the number
of $S$-integral orbits and carry out the cusp analysis as described
above. In Section~\ref{sec:sieve}, we impose the necessary congruence
conditions and prove the corresponding uniformity estimates needed to perform our sieve. In Section~\ref{sec:fieldscounting}, we specialize to the
congruence conditions required for counting field extensions.  In
Section~\ref{sec:volume}, we carry out the corresponding local volume
computations in terms of the Tamagawa number of the group and the
local masses. In Section~\ref{sec:proof}, we combine together the
results of the previous sections to prove our main theorems with the
restriction that the characteristic of $F$ is not $2$ if $n=2$. In
Section~\ref{sec:quadratic}, we introduce a new representation that
parametrizes quadratic extensions that works in any characteristic and
prove our main theorems in the case $n=2$ with no restriction on the
characteristic of $F$.

We note that our methods may be applied to many other counting
problems as well.  For example, they may be used to study the
3-torsion in the $S$-class groups of orders in quadratic extensions of any base field, as well as the 2-torsion in the $S$-class groups of orders in cubic extensions, as studied in \cite{DH,dodqf} when $F = \Q$. Finally, the counting methods
can eventually be extended to representations over $F$ having more than one invariant, leading, e.g., to
a proof of the boundedness of the average rank of elliptic curves over an arbitrary global field $F$ and
related results for average Selmer group sizes and curves of higher genus. These directions and the
corresponding requisite extensions of the methods will be discussed in the second article of this series.

\section{Preliminaries}\label{sec:prelim}

\subsection{Notations}\label{sec:notation}
Throughout this paper, $F$ denotes a fixed global field. That is, $F$ is either a number field or the field of
rational functions of a smooth projective and geometrically connected
algebraic curve $\sc$ over $\F_q$.

For every place $\fP$ of $F$, let $F_\fP$ denote the completion of $F$
at $\fP$. When $\fP$ is nonarchimedean, let $\cO_\fP$ and
$k(\fP)$ denote the ring of integers and the residue field at $\fP$,
respectively. We define $\deg\fP = [k(\fP):\F_q]$ if $F$ is a function field. Let $N\fP=|k(\fP)|$ denote the norm of $\fP$. For any
$a\in F_\fP$, define its $\fP$-adic norm by $|a|_\fP =
(N\fP)^{-\fP(a)}$ where $\fP(a)$ denotes the $\fP$-adic valuation of
$a$. Since every fractional ideal $I$ of $\cO_\fP$ is generated by an
element $a\in F_\fP$ unique up to $\cO_\fP^\times$, we define
$|I|_\fP$ to be $|a|_\fP$. Note that this is the inverse of the ideal
norm. When $F$ is a number field and $\fP$ is archimedean, we define $\deg\fP = [F_\fP:\R]$ and define
$|a|_\fP=|N_{F_\fP/\R}(a)|_\infty$ for any $a\in F_\fP$ where $|\cdot|_\infty$ is the usual absolute value on $\R$. Then for any
$a\in F^\times$, we have the product formula $\prod_\fP |a|_\fP = 1.$

Let $\A$ denote the ring of adeles of $F$. For any adele $a=(a_\fP)$,
we write $|a|$ for the product $\prod_\fP |a_\fP|_\fP$. For any finite
set $S$ of places containing $M_\infty$, let $\A_S$ denote the ring of
$S$-adeles of $F$, that is, $\A_S$ is the restricted direct product of
$F_\fP$ for $\fP\notin S$. Denote by $F_{S}$ the product
$\prod_{\fP\in S}F_\fP$. We embed $\A_S$ and $F_{S}$ in $\A$ by
setting all other coordinates to $1$, and continue to write $|\cdot|$
for the restrictions of $|\cdot|$ on $\A$. On $F_{S}$, $|\cdot|$ is
also called the {\it $S$-norm}. For any $S$-adele $a=(a_\fP)$, we
write $(a)$ for the fractional ideal of $\cO_S$ such that $(a)\otimes
\cO_\fP = a_\fP\cO_\fP$ for every $\fP\notin S$. For any fractional
ideal $I$ of $\cO_S$, let $|I|$ denote the product of
$|I\otimes\cO_\fP|_\fP$ as $\fP$ runs through places not in $S$. Then
$|(a)| = |a|$ for any $a\in \A_S$.

\subsection{The local masses $m_\fP$}

For any $\fP\notin S$, the local masses $m_{\fP}(\Sigma_\fP)$, in the
case that $\Sigma$ is the set of all degree-$n$ separable extensions
of $F$ that are totally ramified at $\fP$, was computed in
\cite[Theorem~1]{Smass}.
% where the sum was taken over separable extensions that are totally ramified.
When $F_\fP=\Q_p$ and $\Sigma$ consists of all degree-$n$ extensions
with no local restrictions, the corresponding full mass
$m_\fP=m_{\fP}(\Sigma_\fP)$ was computed in \cite[Theorem~1.1]{Bmass}.
The methods apply to general nonarchimedean local fields $F_\fP$ to
give
\begin{equation}\label{eq:localmassfinite}
\frac{N\fP}{N\fP - 1}m_\fP = \sum_{\substack{[K:F_\fP]=n\\K\textrm{\'etale}}}  \frac{|\Disc(K/F_\fP)|_\fP}{\#\Aut(K/F_\fP)} = \sum_{k=0}^{n-1}\frac{q(k,n-k)}{(N\fP)^k}.
\end{equation}
When $F_\fP=\R$, we have (\cite[Proposition~2.4]{Bmass})
\begin{equation}\label{eq:localmassR}
m_\fP = \sum_{\substack{[K:\R]=n\\K\textrm{\'etale}}}  \frac{1}{\#\Aut(K/\R)} = \frac{\#S_n[2]}{n!}.
\end{equation}
When $F_\fP=\C$, a degree-$n$ \'etale extension of $\C$ can only be $\C^n$ and so we have
\begin{equation}\label{eq:localmassC}
m_\fP = \sum_{\substack{[K:\C]=n\\K\textrm{\'etale}}}  \frac{1}{\#\Aut(K/\C)} = \frac{1}{n!}.
\end{equation}

%\subsection{Counting lattice points in vector spaces}

\subsection{Tamagawa number of a reductive group over $F$}\label{sec:tam}

In this section, we first recall the definition of the Tamagawa number
of a reductive group $G$ over a global field $F$ where the (global)
character group has rank $1$. See \cite[Chapitre I, \S5]{Os} for the definition in more
general cases.

Let $\omega_G$ denote a top degree left-invariant differential form on
$G$ defined over $F$. For every place $\fP$, write $\omega_{G,\fP}$
for the associated Haar measure on $G(F_\fP)$. Let $S$ be a nonempty
finite set containing all the archimedean places. Then the Tamagawa
measure $\tau_G$ of $G(\A)$ is defined by
$$\tau_G = \frac{D_F^{-\frac{\dim
      G}{2}}}{\underset{s=1}{\operatorname{Res\,}}\zeta_S(s)}
\prod_{\fP\notin S}\frac{N\fP}{N\fP-1}\omega_{G,\fP} \prod_{\fP\in
  S}\omega_{G,\fP}.$$ We note that this measure is independent of the
choice of $S$. Let $\chi$ denote a basis element for the character
group. Let $G(\A)^1$ denote the subgroup of $G(\A)$ consisting of
elements $g$ such that $|\chi(g)|=1$. The Tamagawa number of $G$ is
defined to be the volume of $G(F)\backslash G(\A)^1$ with respect to a
Haar measure $\tau_G^1$ on $G(\A)^1$ defined as follows.

When $F$ is a number field, there is an embedding of $\R^+$ in $G(\A)$
sending each $\lambda\in \R^+$ to the adele $\iota(\lambda)$ so that
$|\chi(\iota(\lambda))| = \lambda$ and that via this embedding, one may
write $G(\A)$ as a Cartesian product $G(\A) \simeq G(\A)^1\times
\R^+.$ Then $\tau_G^1$ is the Haar measure of $G(\A)^1$ such
that $$\tau_G = \tau_G^1\times d^\times \lambda$$ where $d^\times
\lambda = d\lambda/\lambda$ is the usual multiplicative measure on
$\R^+$.

When $F$ is a function field with field of constants $\F_q$, $G(\A)^1$
is an open subgroup of $G(\A)$ and $\tau_G^1$ is the Haar measure on
$G(\A)^1$ defined by
$$\tau_G^1 = \frac{{\tau_G}_{|G(\A)^1}}{\log q}.$$

It is known that the Tamagawa number of a reductive group over $F$
whose derived group is a product of $\SL_k$'s is $1$
(\cite[Chapitre III, Th\'{e}or\`{e}me~5.3]{Os}).

\section{Parametrization of quadratic, cubic, quartic, and quintic extensions of a fixed global field}\label{sec:param}

Let $F$ be a fixed global field.  Our goal in this section is to
parametrize quadratic, cubic, quartic, and quintic extensions of $F$
in terms of certain orbits of prehomogeneous representations.
The representation we give here for $n=2$ does not parametrize quadratic extensions in the case when $F$ has characteristic $2$; however, in Section \ref{sec:quadratic}, we will describe a different (though slightly more complicated and non-reductive) representation that will apply in all characteristics.  We have chosen to keep the representation listed here for simplicity of exposition in the cases when the characteristic of $F$ is not 2. For $n=2$, $3$, $4$, and $5$, we define the prehomogeneous
representations $(G_n,V_n)$ as in Table \ref{tablereps}.

\begin{table}[hbt]
\centering
\begin{tabular}{|c | c|c|c|}
\hline
$n$ & $G_n$ & $V_n$ & Action\\
\hline
$2$ & $\bG_m$ & $\bG_a$ & $g\cdot v:=g^2v$\\[.05in]
$3$ & $\GL_2$ & $\Sym^3(2)$ & $g\cdot f(x,y):=\det(g)^{-1}f((x,y)\cdot g)$ \\[.05in]
$4$ & $\GL_2\times\GL_3/\{(\lambda^2,\lambda^{-1})\}$ & $2\otimes\Sym^2(3)$ &
$(g_2,g_3)\cdot (A,B):=(g_3Ag_3^t,g_3Bg_3^t)g_2^t$ \\[.05in]
$5$ & $\GL_4\times\GL_5/\{(\lambda^2,\lambda^{-1})\}$ & $4\otimes\wedge^2(5)$ &
$(g_4,g_5)\cdot (A,B,C,D):=(g_5Ag_5^t,g_5Bg_5^t,g_5Cg_5^t,g_5Dg_5^t)g_4^t$ \\
\hline
\end{tabular}
\caption{Prehomogeneous representations parametrizing degree-$n$ extensions}\label{tablereps}
\end{table}
\noindent
In Table \ref{tablereps}, $\lambda\in\GL_k$ denotes the
element in the center of $\GL_k$ with $\lambda$'s on the diagonal. In
the cases $n=4$ and $5$, the actions listed are the actions of
$\GL_2\times\GL_3$ and $\GL_4\times\GL_5$, respectively, on $V_n$. It
is easy to check that the subgroup $\{(\lambda^2,\lambda^{-1})\}$ of
$G_n$ acts trivially, and thus the action descends to an action of
$G_n$ on~$V_n$. Each of these groups $G_n$ are reductive, and their
(global) character groups $\Hom(G_n,\bG_m)$ all have rank $1$. Let $\chi$ be
a generator of $\Hom(G_n,\bG_m)$.  Then $\chi$ is unique up to
inversion and for $n=2$, $3$, $4$, and $5$, we may take $\chi$ to
respectively be
\begin{equation}\label{eqchi}
\begin{array}{rcl}
\chi(g)&=&g,\\[.05in]
\chi(g)&=&\det(g),\\[.05in]
\chi(g_2,g_3)&=&\det(g_2)^3\det(g_3)^4,\\[.05in]
\chi(g_4,g_5)&=&\det(g_4)^5\det(g_5)^8.
\end{array}
\end{equation}

The representations $(G_n,V_n)$ are {\it prehomogeneous}, that is, the
action of $G_n(\C)$ on $V_n(\C)$ consists of a single open orbit (in
the Zariski topology); see~\cite{SK}. It follows that for each $n$,
the ring of relative invariants for the representation $V_n$ of $G_n$
is generated by a single element which we refer to as the {\it
  discriminant} and denote by $\Delta=\Delta_n$. The condition of
being a relative invariant means that $\Delta_n(g\cdot
v)=\chi(g)^k\Delta_n(v)$ for $g\in G_n(\C)$ and $v\in V_n(\C)$ for $k$
independent of $g$ and $v$.  It is easy to check that $k=2$ in all our
cases. We note that the constant $c$ in Theorems \ref{thmainfield}, \ref{thmainfield2} and
\ref{thmainfieldS} when $F$ is a number field arises as $1/k$.

For any $v\in \bG_a$, we have $\Delta_2(v) = v$. For any binary cubic
form $ax^3+bx^2y+cxy^2+dy^3\in\Sym^3(2)$, we have
$$\Delta_3(ax^3+bx^2y+cxy^2+dy^3) = b^2c^2-4ac^3-4b^3d-27a^2d^2+18abcd.$$
For any pair ternary quadratic forms viewed formally as $3\times 3$ symmetric matrices
$$(A,B)=\left(\begin{pmatrix} a_{11} &\frac12 a_{12} &\frac12
  a_{13}\\\frac12 a_{12} & a_{22} &\frac12 a_{23}\\ \frac12 a_{13}
  &\frac12 a_{23} &a_{33}\end{pmatrix}, \begin{pmatrix} b_{11}
  &\frac12 b_{12} &\frac12 b_{13}\\\frac12 b_{12} & b_{22} &\frac12
  b_{23}\\ \frac12 b_{13} &\frac12 b_{23}
  &b_{33}\end{pmatrix}\right),$$ we have
$$\Delta_4(A,B) = \Delta_3(4\det(Ax - By)).$$ The polynomial
$\Delta_5$ is homogeneous in the coordinates of $5\times 5$
alternating matrices $A,B,C,D$ of total degree $40$.

The reason why the representations in Table \ref{tablereps} are
particularly interesting is due to the following parametrization
theorem.

\begin{theorem}\label{thfieldparamR}
Suppose $R$ is a ring. Then there is a map $\theta_R$ from the set of
$G_n(R)$-orbits on $V_n(R)$ to the set of rings of rank $n$ over
$R$. Suppose that $2$ is invertible in $R$ when $n=2$. Then $\theta$ has
the following properties:
\begin{itemize}
\item[{\rm (a)}] When $R$ is a PID, $\theta_R$ is a surjection and is a bijection
  when restricted to the preimages of maximal rank $n$ rings over
  $R$. Furthermore if $v\in V_n(R)$ with $\theta_R(v)$ maximal, then
\begin{equation*}
\Aut(\theta_R(v)/R)\cong\Stab_{G_n(R)}(v),
\end{equation*}
where $\Aut(\theta_R(v)/R)$ denotes the group of automorphisms of
$\theta_R(v)$ fixing $R$, and where for simplicity we write $\theta_R(v)$ for
$\theta_R(G_n(R)v)$.
%\item[{\rm (b)}]  When $R$ is a Dedekind domain and $n=2$, we have for any $v\in
%  V_n(R)$, $$\Disc(\theta_R(v)/R)\otimes R\bigl[\frac12\bigr] =
%  (\Delta_n(v))\otimes R\bigl[\frac12\bigr].$$
\item[{\rm (b)}]  When $R$ is a Dedekind domain, we have for any $v\in V_n(R)$, $$\Disc(\theta_R(v)/R) = (\Delta_n(v)).$$
\end{itemize}
\end{theorem}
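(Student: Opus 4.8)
The plan is to construct the map $\theta_R$ case by case for $n=2,3,4,5$, using the classical ``resolvent'' and ``multiplication table'' constructions that realize each $V_n$ as a space of structure constants, and then to verify properties (a)--(c) by reducing to the case where $R$ is a complete discrete valuation ring (or a field) via localization. First I would recall, for each $n$, the explicit recipe attaching to $v\in V_n(R)$ a rank-$n$ $R$-algebra $\theta_R(v)$: for $n=2$, $v\in\bG_a(R)=R$ yields $R[x]/(x^2-v)$ (or more precisely the order $R[\sqrt{v}\,]$ suitably normalized so that the $G_2$-action by $g^2$ matches isomorphism of quadratic rings); for $n=3$, a binary cubic form $f$ gives the cubic ring whose multiplication is the Delone--Faddeev table built from the coefficients of $f$, as in \cite{DF}; for $n=4$, a pair of ternary quadratic forms $(A,B)$ gives the quartic ring together with its cubic resolvent $4\det(Ax-By)$, following \cite{BII}; and for $n=5$, a quadruple of $5\times5$ alternating forms gives the quintic ring via the sextic resolvent construction of \cite{BIII}. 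In each case the $G_n(R)$-equivariance of the recipe is exactly the statement (already proved in the cited references over $\Z$, and formal over any base) that the constructed algebra depends only on the $G_n(R)$-orbit of $v$.

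Next I would address part (a). When $R$ is a PID (hence a Dedekind domain with trivial class group), every rank-$n$ $R$-module is free, so the ``Steinitz class'' obstruction that in general forces one to twist the lattice disappears; this is what makes $\theta_R$ a surjection onto all rank-$n$ rings. Surjectivity and the bijectivity on maximal orders are then precisely the content of the parametrization theorems in \cite{DF,BII,BIII} (the case $n=2$ being elementary), whose proofs are insensitive to which PID $R$ is: they construct an explicit element of $V_n(R)$ from a chosen $R$-basis of the ring, and any two choices differ by a $G_n(R)$-transformation. The automorphism statement $\Aut(\theta_R(v)/R)\cong\Stab_{G_n(R)}(v)$ at a maximal $v$ follows because an $R$-algebra automorphism is an element of $\GL_n(R)$ preserving the multiplication table, which under the identification corresponds exactly to a stabilizing element of $G_n(R)$; at maximal rings the map on orbits is injective, so distinct stabilizer elements give distinct automorphisms and conversely.

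For parts (b) and (c), the identity $\Disc(\theta_R(v)/R)=(\Delta_n(v))$ of fractional ideals (resp.\ after inverting $2$, when $n=2$) can be checked one prime at a time, since both sides are determined by their localizations at the primes of the Dedekind domain $R$. Localizing, we may assume $R$ is a DVR, hence a PID, so $\theta_R(v)$ is a free rank-$n$ $R$-algebra with an explicit basis and an explicit multiplication table in terms of the coordinates of $v$; the discriminant ideal is then generated by the determinant of the trace form on that basis, which is a universal polynomial in the coordinates of $v$. One then verifies that this polynomial equals $\Delta_n$ (up to a unit, and up to a power of $2$ when $n=2$) --- this is a finite algebraic identity that can be confirmed over $\Z[1/2]$ (or $\Z$, for $n>2$) and specialized, and in fact is recorded in \cite{DF,BII,BIII}. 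The factor of $2$ in case (b) reflects the fact that the quadratic order $R[\sqrt{v}\,]$ has discriminant $4v$, not $v$, so the equality of ideals only holds after tensoring with $R[1/2]$; for $n>2$ the normalization of $\Delta_n$ has been chosen precisely so that no such factor appears. I expect the main obstacle to be purely bookkeeping: assembling the four separate classical constructions into one uniform statement and pinning down the normalizations (especially the quotient by $\{(\lambda^2,\lambda^{-1})\}$ in the $n=4,5$ cases and the ``$g^2$'' action in the $n=2$ case) so that the equivariance and the discriminant identities come out with exactly the constants claimed; the conceptual content is entirely contained in the cited references.
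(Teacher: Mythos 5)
Your construction of $\theta_R$ via the multiplication tables of \cite{DF,BII,BIII,BIV} and the appeal to those parametrization theorems over a PID matches the paper's proof (note that the quintic case is \cite{BIV}, not \cite{BIII}). Your handling of (b) and (c) by localization to a DVR and checking a universal polynomial identity for the discriminant would also work, though the paper avoids any explicit computation: it observes that the polynomial $\widetilde{\Delta}_n$ generating $\Disc(\theta_R(v)/R)$ is itself a relative invariant transforming by $\chi^2$, hence a constant multiple of $\Delta_n$, with the constant a unit as seen by evaluating at a single point.

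The genuine gap is in the stabilizer statement of part (a). You write that an $R$-algebra automorphism ``is an element of $\GL_n(R)$ preserving the multiplication table, which under the identification corresponds exactly to a stabilizing element of $G_n(R)$.'' But $G_n$ is not $\GL_n$: for $n=4,5$ it is $(\GL_2\times\GL_3)/\{(\lambda^2,\lambda^{-1})\}$, resp.\ $(\GL_4\times\GL_5)/\{(\lambda^2,\lambda^{-1})\}$, with one factor acting through the resolvent ring, so passing from a change-of-basis matrix on $\theta_R(v)$ to an element of $G_n(R)$ requires the full resolvent construction. More seriously, the hard direction --- that every $g\in\Stab_{G_n(R)}(v)$ arises from an $R$-algebra automorphism, i.e., that the natural injection $\Aut(\theta_R(v)/R)\hookrightarrow\Stab_{G_n(R)}(v)$ is surjective --- is not established by your remark about injectivity of the orbit map. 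The paper closes this by noting that maximality gives $\Aut(\theta_R(v)/R)\cong\Aut(\theta_K(v)/K)$ for $K$ the fraction field, reducing surjectivity to the case of a field, where it is precisely the theorem of Wright--Yukie \cite{WY}. You would need to supply this reduction (or some substitute for it) for part (a) to be complete.
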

We say a rank $n$ ring over $R$ is maximal if it is
%reduced and it is the integral closure
%of $R$ in $\text{Frac}(R_L)$. In particular,
not strictly contained in any other rank $n$ ring over $R$.  In
particular, since every \'etale ring of rank $n$ over a field is
maximal, we have the following particular case of Theorem
\ref{thfieldparamR} for the parametrization of degree-$n$ fields,
which was the main theorem of Wright--Yukie~\cite{WY}:
\begin{theorem}\label{cor:fieldparam}
Let $K$ be a field. There is a bijection $\theta_K$ between the set of
$G_n(K)$-orbits on $V_n(K)$ having nonzero discriminant and the set of
\'etale degree-$n$ extensions of $K$. For any $v\in V_n(K)$, we
have $$\Aut(\theta_K(v)/K)\cong \Stab_{G_n(K)}(v).$$
\end{theorem}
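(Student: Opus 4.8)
The plan is to deduce Theorem~\ref{cor:fieldparam} directly from Theorem~\ref{thfieldparamR}, using only that a field is a PID together with the standard characterization of étale algebras by nonvanishing of the discriminant.

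First I would observe that a field $K$ is a PID, so Theorem~\ref{thfieldparamR}(a) applies: the map $\theta_K$ is a surjection from the set of $G_n(K)$-orbits on $V_n(K)$ onto the set of rank $n$ rings over $K$, and it is a bijection when restricted to the preimages of the maximal ones. Over a field, however, \emph{every} rank $n$ ring is maximal: any inclusion $R_L\subseteq R_L'$ of rank $n$ rings over $K$ is an inclusion of $n$-dimensional $K$-vector spaces, hence an equality. Therefore $\theta_K$ is in fact a bijection from the set of all $G_n(K)$-orbits on $V_n(K)$ onto the set of all rank $n$ $K$-algebras, and by the last clause of Theorem~\ref{thfieldparamR}(a) we already have $\Aut(\theta_K(v)/K)\cong\Stab_{G_n(K)}(v)$ for every $v\in V_n(K)$.

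Next I would identify which orbits correspond to étale algebras. Since $\mathrm{char}\,F\neq 2$, we have $K=K[\tfrac12]$, so parts (b) and (c) of Theorem~\ref{thfieldparamR} together give $\Disc(\theta_K(v)/K)=(\Delta_n(v))$ for all $v\in V_n(K)$ and all $n\in\{2,3,4,5\}$. Now I would invoke the standard fact that a rank $n$ algebra $A$ over a field $K$ is étale if and only if its trace pairing is nondegenerate, i.e.\ if and only if $\Disc(A/K)$ is a nonzero element of $K$; equivalently, the étale $K$-algebras of degree $n$ are precisely the rank $n$ $K$-algebras of nonzero discriminant. Combined with the previous paragraph, $\theta_K(v)$ is étale if and only if $(\Delta_n(v))\neq(0)$, i.e.\ if and only if the orbit $G_n(K)v$ has nonzero discriminant.

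Finally I would assemble the pieces: restricting the bijection $\theta_K$ of the first step to the subset of orbits of nonzero discriminant yields, by the second step, a bijection onto the set of étale degree $n$ extensions of $K$, and the isomorphism $\Aut(\theta_K(v)/K)\cong\Stab_{G_n(K)}(v)$ is inherited from Theorem~\ref{thfieldparamR}(a). There is no genuine obstacle here; the only points requiring any care are the observation that maximality is automatic over a field and the (well-known) equivalence between étaleness and nonvanishing of the discriminant, and one should be sure to use the hypothesis $\mathrm{char}\,F\neq 2$ so that the factor of $\tfrac12$ appearing in Theorem~\ref{thfieldparamR}(b) is harmless in the case $n=2$.
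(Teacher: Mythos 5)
Your proof is correct and follows essentially the same route as the paper, which likewise deduces Theorem~\ref{cor:fieldparam} as the field case of Theorem~\ref{thfieldparamR}: the paper remarks that ``every \'etale ring of rank $n$ over a field is maximal,'' while you make the slightly stronger (and equally valid, by dimension count) observation that \emph{every} rank $n$ ring over a field is maximal, then spell out the \'etale--iff--nonzero-discriminant step that the paper leaves implicit. One small caveat worth noting: the paper's own proof of the stabilizer statement in Theorem~\ref{thfieldparamR}(a) reduces to the field case and there invokes the theorem of Wright--Yukie, which \emph{is} the content of Theorem~\ref{cor:fieldparam}, so the genuine logical direction of dependence is the reverse of your deduction --- your argument is a valid formal consequence of the statement of Theorem~\ref{thfieldparamR}, but should not be read as an independent proof of Wright--Yukie's theorem.
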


\noindent \textit{Proof of Theorem~\ref{thfieldparamR}.} The set of
$G_n(\Z)$-orbits of a vector $v\in V_n(\Z)$ have been determined in
\cite{DF,BII,BIII,BIV} by explicitly writing down multiplication
tables for a corresponding ring of rank $n$ over $\Z$ and for its
resolvent ring. The same multiplication tables define in general rings
of rank $n$ over $R$ and its resolvent ring, for any $v\in V_n(R)$,
and agree with the construction of Wright--Yukie~\cite{WY} in Theorem
\ref{cor:fieldparam} when $R$ is a field. Forgetting about the
resolvent ring gives our map $\theta_R$. When $R$ is a PID, maximal
rank $n$ rings over $R$ have unique resolvent rings (\cite[\S15]{DF},
\cite[Corollary~5]{BIII}, and \cite[Corollary~3]{BIV}).  Hence
$\theta_R$, when restricted to the preimages of maximal rings, is a
bijection.

The stabilizer statement follows also from the construction of the
multiplication table. The case $n=2$ is immediate. We now consider the
case $n=3$, $4$, or $5$. Suppose $R$ is a PID. Let $v$ be an element
of $V_n(R)$ with $R'=\theta_R(v)$ maximal and hence has a unique
resolvent ring $S'$. Since $R$ is a PID, the quotients $R'/R$ and $S'/R$
are free. An automorphism of $R'$ over $R$ induces an automorphism of
$R'/R$ and an automorphism of $S'/R$ which under the identification of
these quotients as free $R$-modules gives an element of $G_n(R)$
stabilizing $v$. Conversely, an element $g$ of $G_n(R)$ induces an
automorphism of $R'$, as a free $R$-module, which is a ring
homomorphism if $g$ stabilizes $v$.
%\add{Manjul can add more to the proof.}

For the discriminant statement when $n>2$, from the multiplication
table we see that for any Dedekind domain $R$ and any $v\in V_n(R)$,
$\Disc(\theta_R(v)/R)$ is the ideal of $R$ generated by a polynomial
$\w{\Delta}_n(v)$ in the coordinates of $v$. Moreover, for any $g\in
G_n(R)$, one has $\w{\Delta}_n(g.v) = \chi(g)^2\w{\Delta}_n(v)$. Hence
$\w{\Delta}_n$ is a constant multiple of $\Delta_n$. Computing both at
a particular $v$ shows that this constant is a unit. The discriminant
statement when $n=2$ is clear.\hfill$\Box$

\vspace{5pt} We would like to describe a degree-$n$ extension $L$ of
$F$ integrally. By Corollary~\ref{cor:fieldparam}, there exists an
element $v\in V_n(F)$ unique up to the action of $G_n(F)$ such that
$\theta_F(v)=L$. Let $S$ be a fixed nonempty finite set of places
containing $M_\infty$. Suppose further that $S$ contains all the
places above $2$ if $n=2$. For any place $\fP\notin S$, let
$\cO_{L,\fP}$ denote the integral closure of $\cO_\fP$ in $L$ and let
$v_\fP$ be an element of $V_n(\cO_\fP)$ unique up to the action of
$G_n(\cO_\fP)$ such that $\theta_{\cO_\fP}(v_\fP) = \cO_{L,\fP}$.

We would like to patch these $v_\fP$ together into a global integral
element. Since $\theta_{F_\fP}(v) = \theta_{F_\fP}(v_\fP)=L_\fP$,
there exists $g_\fP\in G_n(F_\fP)$ such that $v_\fP = g_\fP v.$ The
adele $(g_\fP)\in G_n(\A_S)$ is well-defined up to translation by
$\prod_{\fP\notin S}G_n(\cO_\fP)$ on the left and by $G_n(F)$ on the
right and so it defines an element in the corresponding double coset
space $\cl_S$. We fix a representative $\beta\in G_n(\A_S)$ for each
double coset and write $\cl_S$ again for this set of
representatives. Then we have the decomposition
\begin{equation}\label{eqstrongapproximation0}
G_n(\A_S)=\coprod_{\beta\in\cl_S} \bigl(\prod_{\fP\notin S}G_n(\cO_\fP)\bigr)\beta G_n(F).
\end{equation}
The set $\cl_S$ is finite due to the works of Borel~\cite{Brl} and
Borel--Prasad~\cite{BP}. There then exists a unique $\beta\in\cl_S$
and some $g\in G_n(F)$ such that $(g_\fP) \in \prod_{\fP\notin
  S}G_n(\cO_\fP)\beta g$. The element $v'=g.v$ corresponds also to the
field $L$ and lies in
\begin{equation}\label{eq:Lbeta}
\L_\beta:= \displaystyle V_n(F)\cap \beta^{-1}\prod_{\fP\notin S} V_n(\cO_\fP)\prod_{\fP\in S}V_n(F_\fP).
\end{equation}
Moreover, as fractional ideals of $\cO_S$, we have
%when $n=2$, $$\Disc_S(L)\otimes\cO_S\Bigl[\frac12\Bigr] =
%(\chi(\beta)^2\Delta_n(v'))\otimes\cO_S\Bigl[\frac12\Bigr],$$
%and when $n>2$,
$$\Disc_S(L) = \prod_{\fP\notin S} \Disc(L_\fP/F_\fP) =
\prod_{\fP\notin S} (\Delta_n(v_\fP)) = (\chi(\beta)^2\Delta_n(v')).$$
Note that $\L_\beta$ is an additive subgroup of $V_n(F)$ commensurable
with $V_n(\cO_S)$.

We have now associated to a degree-$n$ extension $L$ over $F$ an
element $v'\in V_n(F)$ that is $\cO_S$-integral up to finite
denominators and such that $\Disc_S(L)$ and $\Delta_n(v')$ are equal
up to a fixed bounded factor. How unique is such a $v'$?

For nonarchimedean places $\fP$, we say that an element $v\in
V_n(\cO_\fP)$ is {\it maximal at $\fP$} if $\theta_{\cO_\fP}(v)$ is
maximal. We say that an element $v\in \L_\beta$ is {\it $S$-maximal}
if $\beta\cdot v$, when viewed as an element of $V_n(\cO_\fP)$ is
maximal at $\fP$ for all $\fP\notin S$. Denote the set of $S$-maximal
elements of $\L_\beta$ by $\L_\beta^\max$. Then it is clear that
$v'\in \L_\beta^\max.$

\begin{proposition}\label{prop:unique}
If $v_1,v_2\in V_n(\cO_\fP)$ are maximal, then any $g\in G_n(F_\fP)$
such that $g\cdot v_1=v_2$ is an element of $G_n(\cO_\fP)$. In
particular, If $v\in V_n(\cO_\fP)$ is maximal, then
$\Stab_{G_n(\cO_\fP)}(v)=\Stab_{G_n(F_\fP)}(v)$.
\end{proposition}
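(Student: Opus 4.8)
The plan is to exploit the bijectivity part of Theorem~\ref{thfieldparamR}(a) over the PID $\cO_\fP$. Given maximal $v_1,v_2\in V_n(\cO_\fP)$ and $g\in G_n(F_\fP)$ with $g\cdot v_1=v_2$, I first observe that $\theta_{\cO_\fP}(v_1)$ and $\theta_{\cO_\fP}(v_2)$ are maximal rank $n$ rings over $\cO_\fP$, and that $v_1,v_2$ become $G_n(F_\fP)$-equivalent, hence correspond to the same \'etale extension $L_\fP$ of $F_\fP$ under $\theta_{F_\fP}$ by Theorem~\ref{cor:fieldparam}. Since the integral closure of $\cO_\fP$ in $L_\fP$ is unique, $\theta_{\cO_\fP}(v_1)\cong\theta_{\cO_\fP}(v_2)\cong\cO_{L_\fP}$ as $\cO_\fP$-algebras.

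Next I would use Theorem~\ref{thfieldparamR}(a): since $\cO_\fP$ is a PID and $v_1,v_2$ have maximal image, they lie in the preimages of maximal rank $n$ rings, on which $\theta_{\cO_\fP}$ is a bijection onto isomorphism classes. Two vectors mapping to isomorphic maximal rings therefore lie in the same $G_n(\cO_\fP)$-orbit; so there exists $h\in G_n(\cO_\fP)$ with $h\cdot v_1=v_2$. Then $h^{-1}g$ stabilizes $v_1$ in $G_n(F_\fP)$, and it remains to show $\Stab_{G_n(F_\fP)}(v_1)\subseteq G_n(\cO_\fP)$, which will give $g=h\cdot(h^{-1}g)\in G_n(\cO_\fP)$ and simultaneously prove the final sentence of the proposition. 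For that I would again invoke the stabilizer isomorphism in Theorem~\ref{thfieldparamR}(a): $\Stab_{G_n(\cO_\fP)}(v_1)\cong\Aut(\cO_{L_\fP}/\cO_\fP)$ and $\Stab_{G_n(F_\fP)}(v_1)\cong\Aut(L_\fP/F_\fP)$ via Theorem~\ref{cor:fieldparam}, and every automorphism of $L_\fP$ over $F_\fP$ preserves the integral closure $\cO_{L_\fP}$, so the two automorphism groups coincide; transporting this back through the (injective, compatible) maps $\alpha$ of the proof of Theorem~\ref{thfieldparamR} identifies the two stabilizers.

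The one place requiring care — and what I expect to be the main obstacle — is making sure the various identifications are genuinely compatible: that the bijection $\alpha_{\cO_\fP}\colon\Aut(\theta_{\cO_\fP}(v_1)/\cO_\fP)\hookrightarrow\Stab_{G_n(\cO_\fP)}(v_1)$ sits inside $\alpha_{F_\fP}\colon\Aut(L_\fP/F_\fP)\xrightarrow{\sim}\Stab_{G_n(F_\fP)}(v_1)$ as the inclusion of a subgroup, so that an element of $\Stab_{G_n(F_\fP)}(v_1)$ coming from an automorphism fixing $\cO_{L_\fP}$ automatically lands in $G_n(\cO_\fP)$. This is exactly the content of the compatibility between $\alpha_R$ for $R=\cO_\fP$ and $R=F_\fP$ used in the proof of Theorem~\ref{thfieldparamR}(a) (the multiplication-table construction is the same over both rings), so it is available to us; I would simply cite it. Everything else is formal.
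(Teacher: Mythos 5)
Your proof is correct and follows essentially the same route as the paper's: use the bijectivity of $\theta_{\cO_\fP}$ on maximal rank $n$ rings to produce $h\in G_n(\cO_\fP)$ with $h\cdot v_1=v_2$, reduce the claim to the stabilizer statement, and compare $\Stab_{G_n(\cO_\fP)}(v)$ with $\Stab_{G_n(F_\fP)}(v)$ via the chain through $\Aut(\theta_{\cO_\fP}(v)/\cO_\fP)=\Aut(\theta_{F_\fP}(v)/F_\fP)$. The compatibility worry you single out, while legitimate, can be bypassed: the inclusion $\Stab_{G_n(\cO_\fP)}(v)\subseteq\Stab_{G_n(F_\fP)}(v)$ is automatic, both groups are finite, and the abstract isomorphism chain already gives equal cardinalities, forcing equality; the paper uses this cardinality shortcut implicitly, whereas your compatibility argument is also available since it is precisely the naturality of $\alpha_R$ invoked in the proof of Theorem~\ref{thfieldparamR}(a).
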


\begin{proof}
Since $v_1,v_2$ are maximal and correspond to the same field, they
also correspond to the same ring. By Theorem~\ref{thfieldparamR},
$\theta_{\cO_\fP}$ is a bijection when parameterizating maximal
rings. Hence there exists $g_0\in G_n(\cO_\fP)$ such that $g_0\cdot
v_1 = v_2.$ Then $g_0^{-1}g$ is in $\Stab_{G_n(F_\fP)}(v_1)$ and we
see that the two assertions in the proposition are in fact
equivalent. We prove the stabilizer statement instead. By
Theorem~\ref{thfieldparamR}, we have
$$\Stab_{G_n(\cO_\fP)}(v) \cong \Aut(\theta_{\cO_\fP}(v)/\cO_\fP) =
\Aut(\theta_{F_\fP}(v)/F_\fP) \cong \Stab_{G_n(F_\fP)}(v),$$as desired.\end{proof}

As a consequence, if $v_1$ and $v_2$ are two elements in
$\L_\beta^\max$ that are equivalent via some element $g$ of $G_n(F)$, then $g$ must be an
element of $\Gamma_\beta$, where
\begin{equation}
\Gamma_\beta:= \displaystyle G_n(F)\cap \beta^{-1}\bigl(\prod_{\fP\notin S}
G_n(\cO_\fP)\prod_{\fP\in S}G_n(F_\fP)\bigr)\beta.
\end{equation}
Therefore, we conclude that $v'$ is unique up to the action of
$\Gamma_\beta$.

We summarize the above discussion in the following theorem.

\begin{theorem}\label{thmainparam}
We have a bijection
\begin{equation*}
\{\mbox{degree-$n$ extensions of $F$ having nonzero
  discriminant}\}\longleftrightarrow \bigcup_{\beta\in\cl_S}
(\Gamma_{\beta}\backslash \L_{\beta}^\max).
\end{equation*}
Furthermore, if $L$ is a degree-$n$ extension corresponding to $v'\in
\L_\beta$ for some $\beta\in\cl_S$ under this bijection, then
%\begin{eqnarray*}
%\Disc_S(L)\otimes \cO_S\Bigl[\frac12\Bigr]&=&(\chi(\beta))^2(\Delta_n(v'))\otimes \cO_S\Bigl[\frac12\Bigr],\mbox{ if }n=2,\\
$$\Disc_S(L)=(\chi(\beta))^2(\Delta_n(v')).$$
%\end{eqnarray*}
As a consequence, the $S$-Steinitz class of $L$ is the $\cO_S$-ideal
class of $(\chi(\beta))$.
\end{theorem}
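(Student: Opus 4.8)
The plan is to observe that almost every ingredient of Theorem~\ref{thmainparam} has already been assembled in the discussion preceding it, and to package that discussion into the stated bijection, supplying separately only the Steinitz-class assertion.

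\textbf{The two maps.} To a degree $n$ extension $L$ of $F$ of nonzero discriminant, the construction above associates --- after choosing $v\in V_n(F)$ with $\theta_F(v)=L$ (Theorem~\ref{cor:fieldparam}), maximal local representatives $v_\fP\in V_n(\cO_\fP)$ with $\theta_{\cO_\fP}(v_\fP)=\cO_{L,\fP}$ for $\fP\notin S$, and $g_\fP\in G_n(F_\fP)$ with $v_\fP=g_\fP v$ --- a double coset $\beta\in\cl_S$ of $(g_\fP)$ via \eqref{eqstrongapproximation0} and an element $v'=g\cdot v\in\L_\beta^{\max}$, where $(g_\fP)\in(\prod_{\fP\notin S}G_n(\cO_\fP))\beta g$. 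First I would record that the resulting class $[v']\in\Gamma_\beta\backslash\L_\beta^{\max}$ is independent of every choice: replacing $v_\fP$ by $\kappa_\fP v_\fP$ with $\kappa_\fP\in G_n(\cO_\fP)$ is absorbed in the left factor; for fixed $(v,v_\fP)$ the element $g_\fP$ is unique up to $\Stab_{G_n(F_\fP)}(v_\fP)=\Stab_{G_n(\cO_\fP)}(v_\fP)\subseteq G_n(\cO_\fP)$ by the preceding Proposition, again absorbed on the left; replacing $v$ by an element of $G_n(F)v$ twists $(g_\fP)$ on the right by $G_n(F)$; and the residual freedom in $g$ is precisely left multiplication of $v'$ by $\Gamma_\beta$. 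In the reverse direction, $[v']\mapsto\theta_F(v')$ is well defined since $\Gamma_\beta\subseteq G_n(F)$ and $\theta_F$ is constant on $G_n(F)$-orbits.

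\textbf{They are mutually inverse, and the discriminant formulas.} The composite $L\mapsto[v']\mapsto\theta_F(v')$ gives $\theta_F(g\cdot v)=\theta_F(v)=L$. Conversely, given $v'\in\L_\beta^{\max}$ put $L:=\theta_F(v')$ and run the construction with the choice $v:=v'$: because $v'$ is $S$-maximal, each $\beta_\fP v'\in V_n(\cO_\fP)$ is maximal and $\theta_{\cO_\fP}(\beta_\fP v')$ is the maximal order of $L\otimes F_\fP$, namely $\cO_{L,\fP}$; so one may take $v_\fP=\beta_\fP v'$ and $g_\fP=\beta_\fP$, whence $(g_\fP)=\beta\in(\prod G_n(\cO_\fP))\,\beta\cdot 1$ recovers the same $\beta$ and outputs $1\cdot v'=v'$. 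Thus the map of the theorem is a bijection. The displayed identities for $\Disc_S(L)$ are exactly the formulas derived immediately before the theorem (apply Theorem~\ref{thfieldparamR}(b),(c) at each $\fP\notin S$ to $v_\fP=\beta_\fP v'$ and multiply over $\fP$). The one point worth flagging is that a non-reduced $\cO_\fP$-algebra is never maximal, so any $v'\in\L_\beta^{\max}$ automatically has nonzero discriminant at every $\fP\notin S$ and $\theta_F(v')$ is \'etale; the two sides of the bijection then match once ``extension'' is read as \'etale extension (equivalently, once $\L_\beta^{\max}$ is understood to be cut down to the locus where $\theta_F(v')$ is a field).

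\textbf{The Steinitz class.} I would compute inside the one-dimensional $F$-space $\wedge^n_F L$, trivialized by the standard $F$-basis of $L$ coming from $v'$. For $\fP\notin S$ we have $\cO_{L,\fP}=\theta_{\cO_\fP}(\beta_\fP v')$, whose trace-form discriminant is $(\Delta_n(\beta_\fP v'))=(\chi(\beta_\fP)^2\Delta_n(v'))$ by Theorem~\ref{thfieldparamR}(c) --- for $n=2$ one instead checks directly that $\cO_{L,\fP}=\cO_\fP[\beta_\fP\sqrt{v'}\,]$, which is valid also at $\fP\mid 2$ --- while the $\cO_\fP$-span of the standard basis has trace-form discriminant $(\Delta_n(v'))$. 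Comparing these two $\cO_\fP$-lattices in $L\otimes F_\fP$, their module index has square $(\chi(\beta_\fP))^2$, so $\wedge^n_{\cO_\fP}\cO_{L,\fP}=(\chi(\beta_\fP))$. Gluing over all $\fP\notin S$ identifies $\wedge^n_{\cO_S}\cO_{L,S}$, hence the $S$-Steinitz ideal, with the $\cO_S$-fractional ideal $(\chi(\beta))$; its ideal class is unchanged if $\beta$ is replaced within its double coset or $v'$ within its $\Gamma_\beta$-orbit, since then $\chi(\beta)$ changes only by a unit at every $\fP\notin S$ times an element of $F^\times$.

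\textbf{Main obstacle.} There is essentially none: the substantive input --- the parametrization Theorem~\ref{thfieldparamR}, the maximality and stabilizer statements, finiteness of $\cl_S$, and the prime-by-prime discriminant comparison --- is already available, so the bijection and the discriminant identities amount to bookkeeping of choices. The only genuinely new computation is the Steinitz-class identity, and even that reduces to the lattice-index comparison above once the discriminant formula is in hand; the most delicate point is merely phrasing the field-versus-\'etale convention in the bijection correctly.
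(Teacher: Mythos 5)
Your reconstruction of the bijection and the two discriminant formulas is correct and matches the paper's intent exactly: the paper explicitly says it is merely summarizing the discussion preceding the theorem, so its own ``proof'' only addresses the final Steinitz-class claim. Your careful tracking of the independence of all choices (local maximal representatives up to $G_n(\cO_\fP)$, the uniqueness of $g_\fP$ up to $\Stab_{G_n(\cO_\fP)}(v_\fP)$, the residual $\Gamma_\beta$-ambiguity) is precisely the bookkeeping the authors elided, and your observation that one must read ``degree $n$ extension'' as \'etale extension is the right caveat.

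For the Steinitz-class assertion you take a genuinely different route from the paper. The paper simply invokes a theorem of Artin: for an $F$-basis of $L$ with trace-form discriminant $\delta_{L/F}$, one has $\Disc_S(L)=(\delta_{L/F})\mathfrak{a}^2$ with $[\mathfrak{a}]$ the $S$-Steinitz class; combined with $\Disc_S(L)=(\chi(\beta))^2(\Delta_n(v'))$ and the fact that $\Delta_n(v')$ is (up to a unit) the trace-form discriminant of the standard $F$-basis from the multiplication table, this gives $[\mathfrak{a}]=[(\chi(\beta))]$ in one line. You instead reprove Artin's identity locally, comparing $\cO_{L,\fP}$ against the free lattice spanned by the standard basis via the module index, extracting $\wedge^n_{\cO_\fP}\cO_{L,\fP}=(\chi(\beta_\fP))$ at each $\fP$, and gluing. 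This is a valid, more self-contained argument; what the paper's citation buys is brevity, and what your version buys is that the reader sees exactly where the square root of $(\chi(\beta))^2$ comes from at each prime. One small caution: your claim $\cO_{L,\fP}=\cO_\fP[\beta_\fP\sqrt{v'}]$ at $\fP\mid 2$ when $n=2$ presumes that the maximal order is always of that monogenic form, which is not generally true (e.g.\ unramified quadratic extensions of $\Q_2$); the paper dodges this by only asserting the $n=2$ discriminant identity after tensoring with $\cO_S[1/2]$ and by requiring $S$ to contain the primes above $2$ in all subsequent applications, and it waves away the $n=2$ Steinitz statement with ``this is clear.'' So you are not on worse footing than the paper there, but the $n=2$ local claim at dyadic primes as you state it should either be dropped or replaced by the same $\otimes\,\cO_S[\tfrac12]$ hedge the theorem itself uses.
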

\begin{proof}
Only the last statement on the $S$-Steinitz class needs to be
justified. When $n=2$, this is clear. When $n>2$, it follows
immediately from a result of Artin \cite{Artn} (see also
\cite[Proposition 3.3]{KW}): let $\delta_{L/F}\in F^\times$ be the
discriminant of the trace form with respect to some $F$-basis of $L$;
then there is a fractional ideal ${\mathfrak a}$ of $\cO_S$ such that
$\Disc_S(L)=(\delta_{L/F}){\mathfrak a}^2$ and moreover, the ideal
class of ${\mathfrak a}$ is the $S$-Steinitz class of $L$.
\end{proof}

\section{Counting $S$-integral orbits over global fields}\label{sec:count}

Let $F$ be a fixed global field.
%Suppose the characteristic of $F$ is not $2$ when $n=2$.

We say that an element $v\in V_n(F)$ is {\it generic} if the
degree-$n$ extension $L$ of $F$ corresponding to $v$ is a field
extension of $F$, and the Galois group of the normal closure of $L$
over $F$ is $S_n$. For a subset $U$ of $V_n(F)$, we denote the set of
generic elements in $U$ by $U^\gen$ .

For the rest of this section, we fix a nonempty finite set $S$ of
places of $F$ containing $M_\infty$. %When $n=2$ and $F$ is a number
%field, we assume that $S$ contains all the places above $2$.
Since the purpose of this section is to count the number of
$S$-integral orbits instead of counting field extensions, we make no
assumption on the characteristic of $F$ or $S$ when $n=2$. For any
positive real number $X$ and any subset $U$ of $V_n(F)$, we denote by
$U_X$ the set of elements of $U$ whose discriminants have $S$-norm
less than $X$ when $F$ is a number field and $S$-norm equal to~$X$
when $F$ is a function field.  If $H$ is a subgroup of $G_n(F)$ that
preserves $U$, then we let $N(U,H;X)$ denote the number of generic
$H$-orbits on $U_X$, where each orbit $H\cdot v$ is counted with
weight $1/\#\Stab_H(v)$.

By Theorem~\ref{thmainparam}, it suffices to count the number of
$S$-integral orbits such that the $S$-norm of the $\Delta_n$-invariant
is bounded. Since we need to impose congruence conditions later, we
consider the more general case of a sublattice $\L$ of $V_n(F)$ that
is commensurable with $V_n(\cO_S)$ and a subgroup $\Gamma$ of $G_n(F)$
preserving $\L$ that is commensurable with $G_n(\cO_S)$. Our goal in
this section is to determine asymptotics for $N(\L,\Gamma,X)$. We do
this by constructing a fundamental domain for the action of $\Gamma$
on $V_n(F_S)$ and counting the number of $\L$-points via suitable
geometry-of-numbers arguments.

\subsection{Reduction Theory}\label{sec:fund}

Let $S$ denote a set of places of $F$ containing $M_\infty$, and
let $F_S=\prod_{\fP\in S} F_\fP$.  In this section, we construct a
fundamental domain for the action of $\Gamma$ on $V_n(F_S)$ by
constructing a fundamental domain $\FF$ for the action of $\Gamma$ on
$G_n(F_S)$ via left multiplication, a fundamental domain $R$ for the
action of $G_n(F_S)$ on $V_n(F_S)$, and then taking $\FF\cdot R$ as a
multiset (so it is in bijection with $\FF\times R$).

The set $R$ can be taken as a finite discrete set. By
Theorem~\ref{thfieldparamR}, the set of $G_n(F_{S})$-orbits on
$V_n(F_S)$ is in bijection with the set of all
$S$-\textit{specifications}. Here an $S$-specification is a collection
$(L_\fP)_{\fP\in S}$ of degree-$n$ \'etale extensions $L_\fP$ of
$F_\fP$ for each $\fP\in S$. For each such $S$-specification $\sigma$,
we let $V_n(F_{S})^{(\sigma)}$ denote the set of elements
$(v_\fP)_{\fP\in S}\in V_n(F_S)$ such that the extension corresponding
to $v_\fP$ is $L_\fP$ for every $\fP\in S$. We also fix some
$v_\sigma$ in each $V_n(F_{S})^{(\sigma)}$. Then we may take $R$ to be
the collection of the $v_\sigma$'s. In what follows, we will fix an
$S$-specification $\sigma$ and consider only $V_n(F_S)^{(\sigma)}.$ We
define $\Aut(\sigma)$ by
\begin{equation*}
\Aut(\sigma):=\prod_{\fP\in S}\Aut(L_\fP).
\end{equation*}
Then the stabilizer in $G_n(F_S)$ of every element in $V_n(F_S)^{(\sigma)}$ is
isomorphic to $\Aut(\sigma)$ by Theorem~\ref{thfieldparamR}. We have
the following theorem whose proof is identical to that of
\cite[\S2.1]{BS2}.
\begin{theorem}\label{thmfunddomain}
Let $\FF$ denote a fundamental domain for the action of $\Gamma$ on
$G_n(F_{S})$.  For any fixed $v_\sigma\in V_n(F_{S})^{(\sigma)}$, the
multiset $\FF\cdot v_\sigma$ is an $\#\Aut(\sigma)$-fold cover of a
fundamental domain for the action of $\Gamma$ on
$V_n(F_{S})^{(\sigma)}$. More precisely, for $v\in V_n(F_{S})$, we
have
$$\#\{g\in\FF:g\cdot v_\sigma=v\}=\#\Aut(\sigma)/\#\Stab_\Gamma(v).$$
\end{theorem}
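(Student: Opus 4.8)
The plan is to deduce the statement from the orbit--stabilizer structure of the single $G_n(F_S)$-orbit $V_n(F_S)^{(\sigma)}$. First I would record, by applying Theorem~\ref{thfieldparamR} (equivalently Corollary~\ref{cor:fieldparam}) componentwise over the product of local fields $F_S=\prod_{\fP\in S}F_\fP$, that $V_n(F_S)^{(\sigma)}$ is exactly the $G_n(F_S)$-orbit of the chosen point $v_\sigma$ and that the stabilizer $H:=\Stab_{G_n(F_S)}(v_\sigma)$ is isomorphic to $\prod_{\fP\in S}\Aut(L_\fP)=\Aut(\sigma)$, hence a finite group of order $\#\Aut(\sigma)$. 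The orbit map $g\mapsto g\cdot v_\sigma$ then identifies $V_n(F_S)^{(\sigma)}$ with $G_n(F_S)/H$: for a point $v=g_v\cdot v_\sigma$ the fiber $\{g:g\cdot v_\sigma=v\}$ is the single left coset $g_vH$, of cardinality $\#\Aut(\sigma)$, and conjugating the stabilizer gives $g_vHg_v^{-1}=\Stab_{G_n(F_S)}(v)$, so that $\Stab_\Gamma(v)=\Gamma\cap g_vHg_v^{-1}$.

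With $\FF$ a fundamental domain for the left $\Gamma$-action on $G_n(F_S)$, that is, a transversal for the right cosets $\Gamma\backslash G_n(F_S)$, the quantity $\#\{g\in\FF:g\cdot v_\sigma=v\}$ is $\#(\FF\cap g_vH)$, and I would compute it by pushing the finite set $g_vH$ to $\Gamma\backslash G_n(F_S)$. Two elements $g_vh_1$ and $g_vh_2$ lie in the same right $\Gamma$-coset exactly when $g_v(h_1h_2^{-1})g_v^{-1}\in\Gamma$; since this element automatically lies in $g_vHg_v^{-1}=\Stab_{G_n(F_S)}(v)$, the condition is that it lie in $\Stab_\Gamma(v)$. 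Hence the map $g_vH\to\Gamma\backslash G_n(F_S)$ has all fibers of cardinality $\#\Stab_\Gamma(v)$, so $g_vH$ meets exactly $\#\Aut(\sigma)/\#\Stab_\Gamma(v)$ distinct right $\Gamma$-cosets. Since $\FF$ meets each of these cosets in one point, and for a suitably chosen $\FF$ (any transversal can be so adjusted by left-translating its representatives by appropriate elements of $\Gamma$, and the construction in \cite[\S2.1]{BS2} already has this form) that point can be taken inside $g_vH$, we obtain $\#(\FF\cap g_vH)=\#\Aut(\sigma)/\#\Stab_\Gamma(v)$. Summing this identity over a set of representatives for the $\Gamma$-orbits on $V_n(F_S)^{(\sigma)}$ shows that $\FF\cdot v_\sigma$ meets each such orbit in exactly one point, carrying there the asserted multiplicity --- which is precisely what it means for $\FF\cdot v_\sigma$ to be an $\#\Aut(\sigma)$-fold cover of a fundamental domain for $\Gamma$ on $V_n(F_S)^{(\sigma)}$. (For $v$ outside the orbit $V_n(F_S)^{(\sigma)}$ the left-hand side is vacuously $0$, and the identity is only ever evaluated at points actually occurring in $\FF\cdot v_\sigma$.)

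The step requiring the most care is the bookkeeping with the finite stabilizer $\Aut(\sigma)$: one must keep straight the two-sided coset space $\Gamma\backslash G_n(F_S)/H$, which indexes the $\Gamma$-orbits on $V_n(F_S)^{(\sigma)}$, against the one-sided space $\Gamma\backslash G_n(F_S)$ over which $\FF$ is a fundamental domain, and check that each point $v$ acquires multiplicity $\#\Aut(\sigma)/\#\Stab_\Gamma(v)$ rather than $\#\Aut(\sigma)$ when $\Stab_\Gamma(v)$ is nontrivial. Everything else is formal once Theorem~\ref{thfieldparamR} is available: it is that theorem which lets one identify all the relevant stabilizers with subgroups of the finite group $\Aut(\sigma)$, and the existence and standard properties of $\FF$ are furnished by the reduction theory recalled in this subsection. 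For the later counting arguments only the averaged form is needed, namely that $\#\{g\in\FF:g\cdot v_\sigma\in U\}=\#\Aut(\sigma)\sum_{O\subseteq U}\#\Stab_\Gamma(O)^{-1}$ for every $\Gamma$-stable subset $U\subseteq V_n(F_S)^{(\sigma)}$, the sum running over $\Gamma$-orbits $O$; this follows at once by summing the pointwise identity.
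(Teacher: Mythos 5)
Your approach — identify $V_n(F_S)^{(\sigma)}$ with $G_n(F_S)/H$ where $H=\Stab_{G_n(F_S)}(v_\sigma)\cong\Aut(\sigma)$, and then count by analyzing how the finite coset $g_vH$ distributes among $\Gamma$-cosets — is exactly the orbit--stabilizer/double-coset argument the paper invokes (via the reference to \cite[\S2.1]{BS2}), and the key computation that $g_vH$ meets precisely $\#\Aut(\sigma)/\#\Stab_\Gamma(v)$ distinct $\Gamma$-cosets is correct.

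However, the step where you claim ``for a suitably chosen $\FF$ \dots\ that point can be taken inside $g_vH$'' so that $\#(\FF\cap g_vH)=\#\Aut(\sigma)/\#\Stab_\Gamma(v)$ holds \emph{pointwise} does not hold up. The required adjustment of $\FF$ depends on $v$, and no single transversal achieves it for all $v$ simultaneously: for a generic $\FF$, one has only the inequality $\#(\FF\cap g_vH)\leq\#\Aut(\sigma)/\#\Stab_\Gamma(v)$, with equality failing at some points and the deficit made up elsewhere in the $\Gamma$-orbit of $v$. (A toy example: $G=\Z\times\Z/2$, $\Gamma=2\Z\times\{0\}$, $V=\Z$ with $(g,\epsilon)\cdot v=g+v$, $v_\sigma=0$, $H=\{0\}\times\Z/2$. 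The transversal $\FF'=\{(0,0),(2,1),(1,0),(3,1)\}$ is a valid fundamental domain, yet $\#\{g\in\FF':g\cdot 0=0\}=1$ while $\#\Aut(\sigma)/\#\Stab_\Gamma(0)=2$.) Relatedly, the assertion that ``$\FF\cdot v_\sigma$ meets each such orbit in exactly one point'' is not correct either: it meets each $\Gamma$-orbit $O$ in a multiset of total multiplicity $\#\Aut(\sigma)/\#\Stab_\Gamma(O)$, but this mass can be spread over several points of $O$. The correct and all that is needed (and all that BS2 and the downstream averaging in \S4.2 actually use) is the orbit-level identity $\#\{g\in\FF:g\cdot v_\sigma\in\Gamma v\}=\#\Aut(\sigma)/\#\Stab_\Gamma(v)$, which follows directly from your double-coset count $\#(\FF\cap\Gamma g_vH)=\#\Aut(\sigma)/\#\Stab_\Gamma(v)$ without any special choice of $\FF$. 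Your closing ``averaged form'' is the right statement; I would derive it from the double-coset count rather than from the (false-in-general) pointwise claim, and delete the $\FF$-adjustment gambit.
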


Fix any positive real number $X$. Recall the basis $\chi$ for the
group of characters of $G_n$ defined in \eqref{eqchi} and define
$G_n(\A)^1$ to be the subset of $G_n(\A)$ consisting of adeles $g$
such that adele norm $|\chi(g)|$ is $1$. Similarly define $G_n(F_S)^1$
via its natural embedding in $G_n(\A)$. Let $G_n(F_S)^{(\sigma)}_{X}$
denote the subset of $G_n(F_S)$ consisting of $S$-adeles $g$ such that
the adele norm $|\chi(g)|^2$ is at most (resp., equal to)
$X/|\Delta_n(v_\sigma)|$ when $F$ is a number field (resp., when $F$
is a function field). For any $g\in G_n(\cO_S)$, we have
$|\chi(g)|_\fP=1$ for every $\fP\notin S$ while the product
$\prod_\fP|\chi(g)|_\fP=1$ by the product formula. Hence the group
homomorphism $\chi_S:G_n(F_S)\rightarrow \R^+$ defined by $\chi_S(g) =
\prod_{\fP\in S}|\chi(g)|_\fP$ is trivial on $G_n(\cO_S)$. Since
$\Gamma$ is commensurable to $G_n(\cO_S)$, we see that $\chi_S(g)$ is
a root of unity for any $g\in\Gamma$. Hence $\Gamma$ is contained in
the kernel of $\chi_S$ and preserves $G_n(F_S)^{(\sigma)}_{X}.$ Let
$\FF(X)$ denote a fundamental domain for the action of $\Gamma$ on
$G_n(F_S)^{(\sigma)}_{X}$, then $\FF(X)\cdot v_\sigma$ is an
$\#\Aut(\sigma)$-fold cover of a fundamental domain for the action of
$\Gamma$ on $V_n(F_{S})^{(\sigma)}_X$.

To construct $\FF(X)$, we recall the reduction theory developed by
Springer \cite{Sp}. Let $P$ be a minimal parabolic subgroup of
$G_n$. Let $T$ be a maximal split torus of $G_n$ contained in $P$, and
let $N$ be the unipotent radical of $P$. Finally, let $\Delta$ denote
a set of simple roots. That is, $\Delta$ is a basis for the set of
positive roots defined by $P$.

We use the following coordinates for $T$ and $\Delta$:
\begin{equation}
\begin{array}{lll}
n=2:&T=1;&\\[.1in]
n=3:
&T=\Bigl\{\left(
\footnotesize{\begin{array}{cc}s_1^{-1}&{}\\{}&\!\!\!\!s_1\end{array}}
\right)\Bigr\},&\Delta = \{s_1^2\};\\[.2in]
n=4:
&T=\left\{
\left(
\footnotesize{\begin{array}{cc}s_1^{-1}&{}\\{}&\!\!\!\!s_1\end{array}}
\right),\biggl(
\footnotesize{\begin{array}{ccc}
s_2^{-2}s_3^{-1}&{}&{}\\{}&\!\!\!\!\!\!s_2s_3^{-1}&{}\\{}&{}&\!\!\!\!\!\!s_2s_3^2
\end{array}}
\biggr)\right\},&\Delta = \{s_1^2,s_2^3,s_3^3\};\\[.2in]
n=5:
&T=\left\{
\left(
\footnotesize{\begin{array}{cccc}
\!\!\!s_1^{-3}s_2^{-2}s_3^{-1}&{}&{}&{}\\[.01in]
{}&\!\!\!\!\!\!\!\!\!\!\!\!\!\!\!\!\!\!\!\!\!\!\!\!\!\!\!\!\!\!\!\!\!\!\!\!s_1^{1}s_2^{-2}s_3^{-1}&{}&{}\\[.01in]
{}&{}&\!\!\!\!\!\!\!\!\!\!\!\!\!\!\!\!\!\!\!\!\!\!\!\!\!\!\!\!\!\!\!\!\!\!\!s_1^{1}s_2^{2}s_3^{-1}&{}\\[.01in]
{}&{}&{}&\!\!\!\!\!\!\!\!\!\!\!\!\!\!\!\!\!\!\!\!\!\!\!\!\!\!\!\!\!\!\!\!\!\!\!\!\!\!\!\!s_1^{1}s_2^{2}s_3^{3}
\end{array}}
\!\!\!\!\!\!\!\!\!\!\!\right),
\left(
\footnotesize{\begin{array}{ccccc}
\!\!\!\!\!s_4^{-4}s_5^{-3}s_6^{-2}s_7^{-1}&{}&{}&{}&{}\\[.01in]
{}&\!\!\!\!\!\!\!\!\!\!\!\!\!\!\!\!\!\!\!\!\!\!\!\!\!\!\!\!\!\!\!\!\!\!\!\!\!\!\!\!\!\!s_4^{1}s_5^{-3}s_6^{-2}s_7^{-1}&{}&{}&{}\\[.01in]
{}&{}&\!\!\!\!\!\!\!\!\!\!\!\!\!\!\!\!\!\!\!\!\!\!\!\!\!\!\!\!\!\!\!\!\!\!\!\!\!\!\!\!\!\!s_4^{1}s_5^{2}s_6^{-2}s_7^{-1}&{}&{}\\[.01in]
{}&{}&{}&\!\!\!\!\!\!\!\!\!\!\!\!\!\!\!\!\!\!\!\!\!\!\!\!\!\!\!\!\!\!\!\!\!\!\!\!\!\!\!\!\!\!s_4^{1}s_5^{2}s_6^{3}s_7^{-1}&{}\\[.01in]
{}&{}&{}&{}&\!\!\!\!\!\!\!\!\!\!\!\!\!\!\!\!\!\!\!\!\!\!\!\!\!\!\!\!\!\!\!\!\!\!\!\!\!\!\!\!\!\!s_4^{1}s_5^{2}s_6^{3}s_7^{4}
\end{array}}
\!\!\!\!\!\!\!\!\!\!\right)\right\},&\Delta = \{s_1^4,s_2^4,s_3^4,s_4^5,s_5^5,s_6^5,s_7^5\}.%\\[.1in]
\end{array}
\end{equation}

Set $S_\infty$ to be $M_\infty$ when $F$ is a number field and to be
any (fixed) nonempty subset of $S$ when $F$ is a function field. For
any positive constants $c$ and $c'$, define:
\begin{eqnarray*}
 T(c) &=& \{t=(t_\fP)_{\fP\in S_\infty}\in T(F_{S_\infty}) : |\alpha(t)| \geq c,\forall \alpha\in \Delta\},\\
 T(c,c') &=& \{t\in T(c) : ||\frac{|t_w|_w^{1/\deg w}}{|t_{w'}|_{w'}^{1/\deg w'}}||_\infty \leq c',\forall w,w'\in S_\infty\},
\end{eqnarray*}
where we identify $T$ with $\bG_m^r$, view
$|t_w|_w^{1/\deg w}/|t_{w'}|_{w'}^{1/\deg w'}$ as an element of $\R^r$ and where
$||\cdot||_\infty$ denotes the supremum norm on $\R^r$. Then by
\cite[Remark 2.2]{Sp} there exist positive real numbers $c,c'$, a
compact subset $N'\subset N(F_{S_\infty})$, and a compact subgroup
$K'$ of $G_n(F_S)$ such that
\begin{equation}\label{eq:Spred}
G_n(F_S)^1 = G_n(\cO_S)N'T(c,c')K'.
\end{equation}
We remark that in \cite{Sp}, the set $T(c)$ is defined by $|\alpha(t)|
\leq c.$ Since we want a fundamental domain for the left action of
$G_n(\cO)$ on $G_n(F_S)$, we need to apply inverses to the results of
\cite{Sp}. The extra parameter $c'$ comes from computing
$T(\cO_S)\backslash T(c)$ using the fact that the image of
$\cO_S^\times$ in $\R^{|S_\infty|}$ under the map $t\mapsto (\log
|t_v|_v)$ is a lattice of rank $|S_\infty|-1$ in the hyperplane
$H:x_1+\cdots+x_{|S_\infty|} = 0$ with compact quotient. The scaling factor $1/\deg w$ is unnecessary in the function field case. In the number field case, it guarantees that when $t$ is viewed in $F_{S_\infty}^r = \R^{rd}$, every two coordinates differ multiplicatively by $O(1)$.

The subset $N'T(c,c')K'$ is called a {\it Siegel domain} $\D_0$. Since
$\Gamma$ is commensurable with $G_n(\cO_S)$, there exists a finite set
of elements $g_1,\ldots,g_k\in G_n(F)$ such that the union
$\cup_{i=1}^k g_i\D_0$ of translates of $\D_0$ contains a fundamental
domain $\Omega$ for the action of $\Gamma$ on $G_n(F_S)^1.$ To obtain
$\FF(X)$ or $\FF$, we use the center of $G_n$ to apply a scaling as
follows. We write $g(\lambda)$ for the element of the center of $G_n$
that scales every coordinate of $V_n$ by $\lambda$. When $F$ is a
number field, there is an embedding $\iota$ of $\R^+$ into
$G_n(F_{S})$ sending $\lambda\in\R^+$ to the adele $\iota(\lambda)$
that is $g(\lambda^{\kappa})$ at every infinite place and is $1$ at
every finite place in $S$ where $\kappa$ is a positive real constant
chosen so that the $S$-norm $|\chi(\iota(\lambda))|$ is $\lambda$. Let
$\Lambda_X$ be the image of this embedding of the interval
$(0,(X/|\Delta_n(v_\sigma)|)^{1/2}].$ Then we may take $\FF(X)$ to be
  $\Lambda_X\Omega$. We let $\Lambda$ denote the entire image of
  $\iota$ and set $\FF = \Lambda\Omega$.

When $F$ is a function field of characteristic $p$, one could let
$\Lambda_X$ be an arbitrary (fixed) element of
$G_n(F_{S})^{(\sigma)}_X$ and take $\Lambda_X\Omega$ to be $\FF(X)$. We
will use this construction when we later compute the various volumes
to obtain the main term of the estimate. However in order for the
shape of the fundamental domain to remain the same as $X$ grows, for
the purpose of a controllable error term, we make the following
modification. We fix an embedding of $\F_p(u)$ into $F$. Precomposing
it with the map $\Z\rightarrow \F_p(u)$ sending $m$ to $u^m$, and
postcomposing it with the embedding of $F$ into $F_{S}$ that is the
natural embedding at all the places of $S_\infty$ and is the constant
$1$ at all other places, give an embedding $\Z\rightarrow F_S$. Next
we postcompose it with the embedding of $F_{S}$ into $G_n(F_S)$
sending $\lambda$ to $g(\lambda)$ as described above. Denote by
$\iota$ the resulting embedding $\Z\rightarrow G_n(F_S)$. Let
$\Lambda$ denote the image of $\iota$. A finite union of right
translates of the set $\Lambda\Omega$ then forms a fundamental domain
$\FF$ for the action of $\Gamma$ on $G_n(F_S)$. Note that one may
combine these right translates with the compact subgroup $K'$. We set
$\FF(X)$ to be the intersection of $\FF$ with
$G_n(F_S)^{(\sigma)}_X$. Of course, we only consider $X$ when this set
is nonempty, for otherwise there are no orbits such that the $S$-norm
of the $\Delta_n$ invariant is $X$.

We summarize the above construction in the following theorem.

\begin{theorem}\label{thm:fundG}
 There exists a subset $\D$ of $G_n(F_S)$ of the form $\Lambda
 N'T(c,c')K''$ where $\Lambda$ is a subset of the center of
 $G_n(F_S)$, where $N'$ and $T(c,c')$ are as above, and where $K''$ is
 a finite union of right translates of a compact subgroup $K'$ of
 $G_n(F_S)$, such that a fundamental domain for the left action of
 $\Gamma $ on $G_n(F_{S})$ is contained in a finite union of
 translates $g_i\D$, with $g_i\in G_n(\cO_S)$, for $i=1,\ldots,k$.
\end{theorem}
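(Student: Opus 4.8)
The plan is to read off the statement from Springer's reduction theory \eqref{eq:Spred}, the commensurability of $\Gamma$ with $G_n(\cO_S)$, and the central scaling $\iota$ constructed above; the theorem is in effect a repackaging of that discussion, so the proof mostly consists of organizing those ingredients in the right order.

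First I would name the Siegel domain $\D_0:=N'T(c,c')K'$, so that \eqref{eq:Spred} reads $G_n(F_S)^1=G_n(\cO_S)\D_0$; thus $\D_0$ contains (indeed surjects onto) a fundamental domain for the left action of $G_n(\cO_S)$ on $G_n(F_S)^1$. Next I would pass from $G_n(\cO_S)$ to the commensurable group $\Gamma$: since $\Gamma\cap G_n(\cO_S)$ has finite index in both groups, a finite union $\bigcup_{i=1}^k g_i\D_0$ of left $G_n(F)$-translates of $\D_0$ contains a fundamental domain $\Omega$ for the action of $\Gamma$ on $G_n(F_S)^1$ --- this is exactly the step recorded in the discussion above, and uses only the elementary fact that a fundamental domain for a finite-index subgroup contains one for the overgroup.

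Then I would restore the central direction. Let $\Lambda$ be the image of $\iota$, i.e.\ the image of $\R^+$ under $\lambda\mapsto\iota(\lambda)$ in the number-field case and the image of $\Z$ under $m\mapsto g(u^m)$ coming from the fixed embedding $\F_p(u)\hookrightarrow F$ in the function-field case; in both cases $\Lambda$ lies in the center of $G_n(F_S)$, and $\Gamma\subset G_n(F_S)^1$ (as noted above). Taking $\FF:=\Lambda\Omega$ when $F$ is a number field, and a finite union of right translates of $\Lambda\Omega$ when $F$ is a function field, gives a fundamental domain for the action of $\Gamma$ on all of $G_n(F_S)$, with the extra right translates in the function-field case absorbed into $K'$ to produce $K''$. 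Since $\Lambda$ is central it commutes past each $g_i$, so
\[
\FF\ \subseteq\ \Lambda\Bigl(\bigcup_{i=1}^k g_i\D_0\Bigr)\ =\ \bigcup_{i=1}^k g_i\bigl(\Lambda\D_0\bigr)\ \subseteq\ \bigcup_{i=1}^k g_i\D,\qquad \D:=\Lambda N'T(c,c')K'',
\]
which is the assertion of the theorem (with $K''=K'$ in the number-field case).

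The only point that is not completely formal is the function-field bookkeeping in the last step: there $G_n(F_S)^1$ is open --- not merely cocompact --- in $G_n(F_S)$, so there is no $\R^+$ worth of central scaling, and one must instead use the discrete subgroup $\Lambda\cong\Z$, checking both that its complement in $G_n(F_S)$ is compact (hence can be merged into $K'$) and --- crucially for the error-term estimates of later sections --- that the resulting $\D$ has a shape independent of $X$. Everything else is a direct consequence of \eqref{eq:Spred} and commensurability, carried out exactly as in \cite[\S2.1]{BS2}.
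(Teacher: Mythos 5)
Your proof is correct and follows essentially the same route as the paper: the theorem is indeed a summary of the construction carried out just before it in \S\ref{sec:fund}, and you correctly assemble the Siegel-domain statement \eqref{eq:Spred}, the commensurability passage from $G_n(\cO_S)$ to $\Gamma$, and the central scaling $\Lambda$ (with the function-field right translates absorbed into $K''$). The computation $\FF\subseteq\bigcup_i g_i\Lambda N'T(c,c')K''$ using centrality of $\Lambda$ is the intended argument.
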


For each $i=1,\ldots,k$, let $\FF_{i}$ denote $\FF\cap g_i\D.$
Then $\FF$ is the disjoint union of $\FF_{1},\ldots,\FF_{k}.$

\subsection{Averaging}

Theorem~\ref{thmfunddomain} implies that we have
\begin{equation*}
N(\L^{(\sigma)},\Gamma;X)=\frac{1}{\#\Aut(\sigma)}\#\bigl\{\FF(X)\cdot
v_\sigma\cap\L^{\gen}\bigr\}.
\end{equation*}
When $F$ is a number field, we define $G_0\subset G_n(F_S)$ to be some
fixed product of nonempty open bounded semialgebraic sets $G_0'\subset
G_n(F_{M_\infty})$ and nonempty open compact sets $G_0''\subset
G_n(F_{S\backslash M_\infty})$. When $F$ is a function field, we set
$G_0$ to be some fixed nonempty open compact subset of
$G_n(F_{S})$. Suppose further that $|\chi(g)|>1$ for every $g\in G_0$.

We have the
following equalities used in~\cite{dodpf}:

\begin{equation}\label{eqavg}
\begin{array}{rcl}
N(\L^{(\sigma)},\Gamma;X)
%&=&\displaystyle\frac{1}{\#\Aut(\sigma)}\#\bigl\{\FF\cdot v_\sigma\cap\L^{\gen}_X\bigr\}\\[.2in]
&=&\displaystyle\frac{1}{\Vol(G_0)\#\Aut(\sigma)}\int_{g\in G_0}\#\bigl\{(\FF g\cdot v_\sigma)_X\cap\L^{\gen}\bigr\}dg\\[.2in]
&=&\displaystyle\frac{1}{\Vol(G_0)\#\Aut(\sigma)}\int_{g\in\FF}\#\bigl\{(gG_0\cdot v_\sigma)_X\cap\L^{\gen}\bigr\}dg\\[.2in]
&=&\displaystyle\frac{1}{\Vol(G_0)\#\Aut(\sigma)}\sum_{i=1}^k\int_{g\in\FF_{i}}\#\bigl\{(gG_0\cdot v_\sigma)_X\cap\L^{\gen}\bigr\}dg,
\end{array}
\end{equation}
where $dg$ is any Haar-measure on $G_n(F_S)$ and the volume of
$G_0$ is taken with respect to $dg$.

The action of $g_i^{-1}$ on $V_n(F)$ takes $\L$ to a different lattice
and preserves generic elements and the $S$-norms of their discriminants. Then we have
\begin{equation}\label{eqdivide}
\int_{g\in\FF_{i}}\#\bigl\{(gG_0\cdot v_\sigma)_X\cap\L^{\gen}\bigr\}dg=
\int_{g\in g_i^{-1}\FF_{i}}\#\bigl\{(gG_0\cdot v_\sigma)_X\cap g_i^{-1}\L^{\gen}\bigr\}dg.
\end{equation}
Since $g_i^{-1}\L$ is also a lattice in $V_n(F_S)$, the $S$-norms of
nonzero coefficients of elements in $g_i^{-1}\L$ are uniformly bounded
from below by a positive constant $c_i>0$.

Let $\var$ denote the set of coefficients of $V$. For $\alpha\in\var$,
let $w(\alpha)$ denote the quantity by which an element of $\Lambda T$
scales $\alpha$. Then $w(\alpha)$ is a monomial in $\lambda$ and the
$s_i$. It is easy to see that the exponent of $\lambda$ appearing in
the weight of each $\alpha\in\var$ is the same. We define a partial
order $\leq$ on $\var$, where we set $\alpha\leq\beta$ if the weight
$w(\beta\alpha^{-1})$, when viewed as a character of $T$ is positive;
that is, it is a product of nonnegative powers of the $s_i$. In all
our cases, $\var$ contains an element $\alpha_0$ with
$\alpha_0\leq\alpha$ for all $\alpha\in\var$. For $V_3$ we have $\alpha_0=a$, the
coefficient of $x^3$, for $V_4$ we have $\alpha_0=a_{11}$, and for
$V_5$ we have $\alpha=a_{12}$. Since our representations are not
trivial, we see that the powers of every $s_i$ in
$w(\alpha_0)$ is negative. Let $\FF_{i,X}'$ denote the set of elements
$g\in \FF_{i}$ such that $(g_i^{-1}gG_0\cdot v_\sigma)_X$ contains
elements whose $\alpha_0$-coefficients have $S$-norm at least
$c_i$. It follows that if $g\in\FF_{i}\backslash\FF_{i,X}'$, then
every element of $(g_i^{-1}gG_0\cdot v_\sigma)_X\cap g_i^{-1}\L$ has
$\alpha_0$-coefficient equal to~0.

We call the set $g_i^{-1}\FF_{i,X}'G_0\cdot v_\sigma$ ``the main body''
and the set $g_i^{-1}(\FF_{i}\backslash\FF_{i,X}')G_0\cdot v_\sigma$
``the cuspidal region''.  In the \S\ref{sec:mainnongensmall} and
\S\ref{sec:cuspgensmall}, we prove respectively that the number of
non-generic elements in the main body is negligible and that the
number of generic elements in the cuspidal region is negligible.

\subsection{The number of generic elements in the cusp is negligible}\label{sec:cuspgensmall}

Let $L$ be some fixed lattice in $V_n(F_S)$ that is commensurable with
$V_n(\cO_S)$. We will apply the result of this section when
$L=g_i^{-1}\L$. The $S$-norms of the nonzero coefficients of elements
in $gG_0\cdot v_\sigma\cap L$ for $g\in\D$ are uniformly bounded from
below, say by some constant $c_0>0$. Write $\D_X=\D\cap
G_n(F_S)_X^{(\sigma)}$. and let $\D_X'$ denote the set of elements
$g\in\D_X$ such that $gG_0\cdot v_\sigma$ contains elements whose
$\alpha_0$-coefficients have $S$-norm at least $c_0$. Since
$|\chi(g)|$, for any $g\in G_0$, is bounded below and above by an
absolute constant, it suffices to prove the following theorem.
\begin{theorem}
We have
\begin{equation}\label{eqcuspcutoff}
\int_{g\in\D_X\backslash\D_X'}\#\{gG_0\cdot v_\sigma\cap L^\gen\}dg=O(X^{1-\epsilon_n})
\end{equation}
where $\epsilon_n=1$ when $n=2$ or $3$ and $\epsilon_n=1/d_n$ when $n=4$ or $5$; here $d_n$ is the dimension of $V_n$.
\end{theorem}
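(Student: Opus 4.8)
The strategy is the standard cusp-counting argument from the geometry of numbers, adapted to the $S$-adic setting, combined with the reduction to the combinatorial conditions already verified over $\Q$. First I would make the averaging region explicit: an element $g$ in the Siegel-type domain $\D$ can be written (up to the compact part $K''$ and the finitely many left translates) as $g = \iota(\lambda)\, n\, t$ with $n\in N'$, $t\in T(c,c')$, and $\lambda$ ranging over $\Lambda$ up to the bound imposed by $X$; the torus coordinates $s_1,\dots,s_r$ satisfy $|\alpha(t)|\ge c$ for all simple roots $\alpha\in\Delta$ and the ``balancedness'' condition from $T(c,c')$. The condition $g\in\D_X\setminus\D_X'$ forces every element of $g_i^{-1}gG_0\cdot v_\sigma\cap L$ to have vanishing $\alpha_0$-coefficient (this is exactly how $\D_X'$ was cut out). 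So for each such $g$ I only need to count lattice points of $L$, lying in the translate $gG_0\cdot v_\sigma$, supported on the hyperplane $\{\alpha_0 = 0\}$, and then integrate over the torus and the scaling parameter.

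Next I would bound the number of such lattice points by a volume plus lower-dimensional error terms, via a Lipschitz-boundary estimate for the box $gG_0\cdot v_\sigma$: the number of $L$-points with $\alpha_0 = 0$ is $O\big(\prod_{\alpha\ne\alpha_0,\, \text{effective}} \max(w(\alpha),1)\big)$ in terms of the scaling factors $w(\alpha)$ of the remaining coordinates, plus the usual contributions from lower-dimensional faces (which are bounded analogously and are smaller). Here the constraint ``$gG_0\cdot v_\sigma$ contains an element of $S$-norm at least $c_0$ in the $\alpha_0$-coordinate'' being violated means precisely $|w(\alpha_0)(g)| \ll 1$, i.e. $\lambda^{\text{(exponent)}}\prod s_i^{a_i}\ll 1$ with all $a_i<0$ — this pins the torus into the genuinely cuspidal region. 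Integrating $\prod_{\alpha\ne\alpha_0} \max(|w(\alpha)(t)|,1)$ against the invariant measure $d^\times t$ over $T(c,c')$ restricted by $|w(\alpha_0)(t)|\ll 1$, and then over $\lambda$ up to height $X^{1/2}$, produces the claimed bound once one checks that the relevant exponents make the torus integral convergent (up to a power of $\log X$, which is absorbed into $X^{1-\delta}$ for any $\delta$ strictly smaller than the true gain). For $n=2$ there is no torus at all and the bound is immediate; for $n=3$ one in fact checks that $\{\alpha_0=0\}\cap L^\gen$ is \emph{empty} (no such lattice point can be generic), giving $\delta=1$; for $n=4,5$ one gets a genuine power saving $X^{1-1/d_n}$.

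The heart of the matter, and the step I expect to be the main obstacle, is the convergence/combinatorial input: one must know that the integral $\int_{T(c,c'),\,|w(\alpha_0)|\ll 1}\prod_{\alpha\ne\alpha_0}\max(|w(\alpha)(t)|,1)\, d^\times t$ is dominated by a power of $X$ with exponent strictly less than $1$ (after the $\lambda$-integration), equivalently that a certain linear-programming condition on the weights $w(\alpha)$ of the coordinates of $V_n$, relative to the simple roots of the derived group of $G_n$, holds. This is a statement purely about the character lattice of the maximal split torus of $G_n^{\dr}$, and — as emphasized in the introduction — it is insensitive to the base field because $G_n$ is split; the exponents of the $s_i$ in each $w(\alpha)$ are literally the same integers over any global field $F$ as over $\Q$. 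Therefore I would reduce the estimate to precisely the combinatorial inequalities that were verified in \cite{dodqf,dodpf} (and trivially in \cite{DH}) over $\Q$, invoke that those hold, and conclude. The only genuinely new bookkeeping over a general $F$ is (i) keeping track of the extra scaling parameter $c'$ coming from the rank-$(|S_\infty|-1)$ unit lattice, which only contributes a bounded factor since $T(c,c')/$(unit action) has compact image on the balancing hyperplane, and (ii) replacing ``volume of a box in $\R^{d_n}$'' by ``number of points of a lattice commensurable with $V_n(\cO_S)$ in a box in $V_n(F_S)$'', which is handled by the same Lipschitz-parametrization counting lemma applied coordinatewise over each $F_\fP$, $\fP\in S$.
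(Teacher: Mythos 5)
The overall strategy is the right one, and for $n=2,3$ the remark that any point in the cusp has vanishing $\alpha_0$-coordinate and is therefore non-generic is exactly the paper's argument. But for $n=4,5$ the proposal has a genuine gap: the volume integral you state as ``the heart of the matter'' is \emph{false} as an estimate, so the plan as written would not close.

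Concretely, you claim that it suffices to check that
\[
\int_{T(c,c'),\,|w(\alpha_0)|\ll 1}\ \prod_{\alpha\ne\alpha_0}\max\bigl(|w(\alpha)(t)|,1\bigr)\,|\delta_n(s)|\,d^\times t
\]
is $O(X^{1-\epsilon})$ after the $\lambda$-integration. For $n=4$ this already fails: in the sub-region of the cusp where both $|w(a_{11})|<1$ and $|w(b_{11})|<1$ (and, say, $|w(a_{12})|\gg 1$), the integrand is $\prod_{\alpha\ne a_{11},b_{11}}|w(\alpha)|\cdot|\delta_4(s)| = \lambda^{5/3}s_1^{-2}s_2^{2}s_3^{-2}$, with a \emph{positive} $s_2$-exponent. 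The constraint $|w(a_{12})|\gg 1$ bounds $s_2$ from above, but after carrying out the $s_2,s_1,s_3$-integrations and then $\int_0^{X^{1/2}}\lambda^2\,d^\times\lambda$, the contribution from this sub-region is $\Theta(X)$, not $O(X^{1-1/d_4})$. Thus counting \emph{all} lattice points on $\{\alpha_0=0\}$ via a pure volume bound, without using genericity, gives a quantity of the same order as the main term. This is not a bookkeeping issue that can be pushed into ``combinatorial conditions verified over $\Q$'': the actual combinatorial input in \cite{dodqf,dodpf} is not a single linear-programming inequality but a two-part argument, and you have only described one of the two parts.

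The missing ingredient is the \emph{reducibility} observation, which the paper uses to exclude precisely those shallow cusps. For each admissible downward-closed set $U\ni\alpha_0$ of coordinates one considers $L(U)$; and the key fact (proved ring-theoretically, not by volume) is that for certain $U$ — e.g. $U=\{a_{11},b_{11}\}$, $\{a_{11},a_{12},a_{13}\}$, $\{a_{11},a_{12},a_{22}\}$ when $n=4$ — every element of $L(U)$ is non-generic, so $L(U)^{\gen}=\emptyset$ and that region contributes nothing. Only after this pruning are the remaining $U$ (for $n=4$, just $\{a_{11}\}$ and $\{a_{11},a_{12}\}$) handled by the volume estimate, and even then one must insert auxiliary weights $|w(\alpha)|^{k_\alpha}$ with $\sum k_\alpha<\#U$ for $\alpha\in\min(U)$ to make the torus integral converge with strictly negative exponents. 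Your proposal should therefore be repaired in two places: (i) replace the blanket bound $\prod\max(|w(\alpha)|,1)$ by the $L(U)$-decomposition and explicitly invoke the algebro-geometric reducibility lemmas to discard the cusps where the volume is too large; (ii) use the weight-insertion trick, not mere convergence ``up to a power of $\log X$,'' to obtain the clean exponent $1-1/d_n$ claimed in the statement. Your observation that the relevant combinatorics are base-field independent because $G_n$ is split is correct and is exactly what makes the reduction to \cite{dodqf,dodpf} legitimate once these two points are in place.
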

\begin{proof}
When $n=2$ or $n=3$, the theorem is immediate because every $v\in
gG_0\cdot v_\sigma\cap L$, with $g\in \D_X\backslash\D_X'$, satisfies
$\alpha_0(v)=0$, which implies that $v$ is not generic. When $n=4$ or
$n=5$, we proceed as in \cite{dodpf}.

For a subset $U\subset\var$, let $L(U)$ denote the set of elements of
$L$ such that: for every $\alpha\in U$, $\alpha(v)=0$ for every $v\in
L(U)$; and for every $\alpha\notin U$, there exists $v\in L(U)$ such
that $\alpha(v)\neq 0$. There are characters $\delta_n$ of the torus
(not involving the center) such that the measure
$\delta_n(s)d^\times\lambda dnd^\times s dk$ is a (left-invariant)
Haar measure on $G_n(F_S)$, where $dk$ is the measure on $K''$
obtained from translating the Haar-measure on $K'$, $dn$ is a
Haar-measure on $N(F_{S_\infty})$ and $d^\times s$ denotes $\prod
ds_i/s_i$. When $F$ is a number field, we have the embedding $\iota$
of $\R^+$ into the center of $G_n(F_S)$ and $d^\times\lambda$ is the
push forward of the measure $d\lambda/\lambda$ on $\R^+$. When $F$ is
a function field, then $d^\times\lambda$ is $1$ (i.e., it does not
appear in the Haar measure). The characters $\delta_n(s)$ are given
by:
\begin{eqnarray*}
 \delta_2(s) &=& 1,\\
 \delta_3(s) &=& s_1^{-2},\\
 \delta_4(s) &=& s_1^{-2}s_2^{-6}s_3^{-6},\\
 \delta_5(s) &=& s_1^{-8}s_2^{-12}s_3^{-8}s_4^{-20}s_5^{-30}s_6^{-30}s_7^{-20}.
\end{eqnarray*}

Write $\Lambda'_X$ for $\Lambda_X$ when $F$ is a number field and for
$\iota(m)$ when $F$ is a function field where $m$ is the integer such
that $X/|\Delta_n(v_\sigma)|$ lies between $|\iota(m)|$ and
$|\iota(m+1)|$. By definition of $T$ and $N$, we see that there exists
a compact subset $N''\subset N(F_{S_\infty})$ such that for any $t\in
T(c)$ and any $n\in N'$, $t^{-1}nt\in N''$. Hence any element of
$\D_X$ has the form $gk$ where $g\in\Lambda'_X T(c,c')$ and $k$
belongs to the compact set $N''K'.$ It suffices to prove the estimate
\begin{equation}\label{eqcutcusp}
\int_{g\in\Lambda'_X T(c,c')}\#\{gkG_0\cdot v_\sigma\cap
L(U)^\gen\}|\delta_n(s)|d^\times s d^\times\lambda \ll X^{1-1/d_n},
\end{equation}
for any $k\in N''K'$ for all sets $U\subset\var$ containing $\alpha_0$, where the implied constant does not depend on $k$.  As in the proof of
\cite[Lemma~11]{dodpf} it suffices to prove the theorem for sets $U$
that are closed under the partial order $\leq$, meaning that if
$\beta\in U$ and $\alpha\leq \beta$, then $\alpha\in U$.  Also note
for $g=\lambda s \in\Lambda'_X T'$, the set $gkG_0\cdot v_\sigma\cap
L(U)$ is empty unless $w(\alpha)\gg 1$ for all $\alpha\notin U$.

To count the number of lattice points, we use the following result
from the geometry of numbers.

\begin{proposition}\label{davgen}
Let $E$ be $\R$ $($resp.\ $F_S)$ if $F$ is a number field
$($resp. function field and $S$ is a nonempty set of places$)$. Let
$m$ be a positive integer. Let $B$ be an open bounded and
semialgebraic subset of $E^m$ $($resp.\ an open compact subset of
$E^m)$. Let $K$ be any subset of $\GL_m(E)$ $($resp.\ an open compact
subset of $\GL_m(E))$. Let $c$ be a real constant. Let $L$ be a
lattice in $E^m$. Then for any $k\in K$ and any $t =
\diag(t_1,\ldots,t_m)\in\GL_m(E)$ with the additional condition, when
$F$ is a function field, that $|t_i|_v^{1/\deg v}/|t_i|_{v'}^{1/\deg v'}<c$ for any $v,v'\in
S$,
\begin{equation}\label{eq:daven}
 \#\{tkB \cap L\} = \Vol_{L}(tkB) + O(\Vol(\proj(tkB))),
\end{equation}
where $\Vol$ is some fixed volume measure on $E^m$, $\Vol_{L}$ is a constant multiple of $\Vol$ such that $E^m/L$
has volume $1$, and $\Vol(\proj(tkB))$ denotes the greatest
$\ell$-dimensional volume of any projection of $tkB$ onto a coordinate
subspace obtained by equating $m-\ell$ coordinates to zero, where $d$
takes all values from $1$ to $m-1$.  The implied constant in the
second summand depends only on $E$, $m$, $B$, $c$, $L$ and in the
function field case, also on $K$.
\end{proposition}

When $F$ is a number field, this result follows from Davenport
\cite{Lipschitz} where the dependency on $B$ is through the number and
the degrees of the polynomial inequalities that define $B$, which stay
the same when a linear change of variable is applied. When $F$ is a
function field, see Proposition \ref{prop:skewexp} in Appendix A.

Using Proposition~\ref{davgen} with $B = G_0\cdot v_\sigma$ and $K =
N''K'$, we have for any $k\in K$ and for any nonnegative real numbers
$k_\alpha$,
\begin{eqnarray*}
&&\displaystyle\int_{g\in\Lambda'_X T(c,c')}\#\{gkG_0\cdot v_\sigma\cap
L(U)^\gen\}d^\times\lambda d^\times s \\
&\ll&
\displaystyle\int_{g\in\Lambda'_X
  T(c,c')}\bigl(\prod_{\alpha\not\in U}|w(\alpha)|\bigr)
|\delta_n(s)|d^\times sd^\times\lambda\\
&\ll&\displaystyle\int_{g\in\Lambda'_X
  T(c,c')}\bigl(\prod_{\alpha\not\in U}|w(\alpha)|\bigr)
\bigl(\prod_{\alpha\in
  \min(U)}|w(\alpha)|^{k_\alpha}\bigr)|\delta_n(s)|d^\times sd^\times\lambda\\
&\ll&X\displaystyle\int_{g\in\Lambda'_X T'}
\bigl(\prod_{\alpha\in U}|w(\alpha)^{-1}|\bigr)
\bigl(\prod_{\alpha\in
  \min(U)}|w(\alpha)|^{k_\alpha}\bigr)|\delta_n(s)|d^\times sd^\times\lambda.
\end{eqnarray*}
Therefore, to prove \eqref{eqcutcusp} for a set $U$, it suffices to
find nonnegative real numbers $k_\alpha$ for $\alpha\notin U$ such
that $\sum k_\alpha< \#U$ and the exponents of each $s_i$ in
$\bigl(\prod_{\alpha\in U}w(\alpha)^{-1}\bigr)\bigl(\prod_{\alpha\in
  \min(U)}w(\alpha)^{k_\alpha}\bigr)\delta_n(s)$ are negative. The
condition $\sum k_\alpha< \#U$ implies that the exponent of $\lambda$
is also negative.

We now prove the theorem for $n=4$.  Every element $(A,B)\in\L$ with
$a_{11}=b_{11}=0$ (resp.\ $a_{11}=a_{12}=a_{13}=0$ or
$a_{11}=a_{12}=a_{22}=0$) is reducible because one of the four points
in $\P^2(\bar{F})$ corresponding to $(A,B)$ is rational
(resp.\ $\det(A)=0$; thus the cubic resolvent of $(A,B)$ is
reducible).  Thus it is enough to consider sets $U$ equal to
$\{a_{11}\}$ and $\{a_{11},a_{12}\}$. For $U=\{a_{11}\}$, we can take
the weight $1$: that is $k_\alpha=0$ for all $\alpha$. For $U =
\{a_{11},a_{12}\}$, we take the weight $w(a_{12})$: that is $k_\alpha
= 1$ for $\alpha = a_{12}$ and $0$ for all other $\alpha$. The proof
of the theorem when $n=5$ follows in an identical fashion from
\cite[Lemma~10]{dodpf} and \cite[Table 1]{dodpf}.
\end{proof}

\subsection{The main term}

With notation as above, we have the following lemma.
\begin{lemma}
For $g\in g_i^{-1}\FF_{i,X}'$, we have
\begin{equation*}
\#\{(gG_0\cdot v_\sigma)_X\cap L\}=\Vol_{L}\bigl((gG_0\cdot v_\sigma)_X\bigr)+
O(X|w(\alpha_0)|^{-1/d}),
\end{equation*}
where the volume is computed with respect to the Haar-measure of
$V_n(F_S)$ normalized so that $L$ has covolume 1 in $V_n(F_S)$ and
where $d$ is the degree of $F$ over $\Q$ when $F$ is a number field
and is $1$ when $F$ is a function field.
\end{lemma}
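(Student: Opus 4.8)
The plan is to apply the geometry-of-numbers counting result (Proposition~\ref{davgen}) to the region $gG_0\cdot v_\sigma$ inside the lattice $L$, and to show that the error term in that proposition is dominated by $X|w(\alpha_0)|^{-1/d}$. First I would fix $g\in\D_X'$ and write $g=\lambda s n k$ according to the decomposition $\D=\Lambda N'T(c,c')K''$, so that $gG_0\cdot v_\sigma$ is the image of the bounded set $G_0\cdot v_\sigma$ under multiplication by $\lambda s$ up to the compact factors $n,k$; since $N'$ and $K''$ are compact, the bounded set $G_0\cdot v_\sigma$ can be replaced by a single bounded open set $B\subset V_n(F_S)$ with measure-zero boundary, independent of $g$, and $gG_0\cdot v_\sigma$ is comparable to $(\lambda s)\cdot B$, which is exactly the skewed box $tB$ appearing in Proposition~\ref{davgen} with $t=\diag(w(\alpha))_{\alpha\in\var}$. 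Then I would invoke Proposition~\ref{davgen} (Davenport's lemma over $\R$, or its function-field analogue in Appendix A) to obtain
\[
\#\{gG_0\cdot v_\sigma\cap L\} = \Vol_L\bigl(gG_0\cdot v_\sigma\bigr) + O\bigl(\Vol(\proj(gG_0\cdot v_\sigma))\bigr),
\]
and then intersect with the ball of bounded discriminant: since the discriminant is a relative invariant with $\Delta_n(g\cdot v_\sigma)=\chi(g)^2\Delta_n(v_\sigma)$ and $g$ ranges over $G_n(F_S)_X^{(\sigma)}$, the set $(gG_0\cdot v_\sigma)$ already has all discriminants of $S$-norm $\ll X$ on $\D_X$, so up to adjusting constants one may replace $gG_0\cdot v_\sigma$ by $(gG_0\cdot v_\sigma)_X$ in both the main and error terms (this is the standard manoeuvre: only a bounded-volume shell is lost).

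The second step is to bound the projection term. The key point is that on $\D_X'$ the weight $|w(\alpha_0)|$ of the minimal coefficient $\alpha_0$ is $\gg 1$ (by definition of $\D_X'$: the box must contain a point whose $\alpha_0$-coordinate has $S$-norm $\geq c_0$, and since $\alpha_0$ is a minimal weight, $w(\alpha_0)\le w(\alpha)$ for all $\alpha$, forcing every $w(\alpha)\gg1$). Every $d$-dimensional coordinate projection of $tB$ has volume $\ll \prod_{\alpha\in J}|w(\alpha)|$ for some proper subset $J\subsetneq \var$, and since $\alpha_0$ is the unique minimal weight, any such proper subset omits at least one coordinate $\alpha$ with $w(\alpha)\gg|w(\alpha_0)|$ (more precisely omits $\alpha_0$ itself, or omits something $\ge\alpha_0$). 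Using the homogeneity of $\chi$ and the fact that $\prod_{\alpha\in\var}|w(\alpha)|=|\chi(g)|^{d_n\cdot(\text{stuff})}$ is comparable to $X$ on $G_n(F_S)^{(\sigma)}_X$, one gets $\Vol(\proj(tB)) \ll X\cdot |w(\alpha_0)|^{-c}$ for some positive $c$; tracking the exponents (which are dictated by the explicit characters $\chi$ in \eqref{eqchi} and the coordinates of $T$) shows one can take $c=1/d$ where $d=[F:\Q]$ (number field) or $d=1$ (function field), because the sub-box of largest volume is the one missing only $\alpha_0$, and $|w(\alpha_0)|^{-1}$ enters with the power $1/d$ coming from the archimedean places. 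In the function-field case the same bound follows from Proposition~\ref{prop:skewexp} in Appendix A, with $d=1$.

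I expect the main obstacle to be the bookkeeping in the second step: identifying precisely which coordinate projection maximizes the volume and confirming that the resulting bound is $X|w(\alpha_0)|^{-1/d}$ rather than merely $X|w(\alpha_0)|^{-\epsilon}$. This requires using that $\alpha_0$ is the unique minimal weight (true for $n=3,4,5$ as noted: $\alpha_0=a$ for $V_3$, $a_{11}$ for $V_4$, $a_{12}$ for $V_5$) together with the explicit weight list to see that dropping $\alpha_0$ costs exactly a factor $|w(\alpha_0)|$ in volume, and that over a number field each archimedean place contributes a $1/[F:\Q]$ power when one passes from the adelic norm $|w(\alpha_0)|$ to the Euclidean scaling used in Davenport's lemma. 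The remaining ingredients — compactness of $N'$ and $K''$ to reduce to a fixed box $B$, and the measure-zero boundary needed to apply Proposition~\ref{davgen} — are routine and follow the template of \cite{dodpf}.
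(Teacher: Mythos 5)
Your proposal follows the same route as the paper: apply Proposition~\ref{davgen} to get a main term equal to the volume plus an error given by the maximal projection volume, then observe that because $\alpha_0$ is the unique minimal weight, the projection of largest volume is obtained by dropping a single real coordinate corresponding to $\alpha_0$, and each such coordinate scales by roughly $|w(\alpha_0)|^{1/d}$, giving the error $O(X|w(\alpha_0)|^{-1/d})$.

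One place where your bookkeeping is looser than the paper's and worth tightening: you write the projection volume as $\prod_{\alpha\in J}|w(\alpha)|$ for a proper subset $J\subsetneq\var$, which treats the coordinates as indexed by $\var$; over a number field the real coordinates of $V_n(F_{S_\infty})$ are indexed by $\var\times\{1,\dots,d\}$, and the relevant projection drops exactly one of the $d$ real coordinates attached to $\alpha_0$, not all of $\alpha_0$. The reason each such real coordinate has size $\asymp|w(\alpha_0)|^{1/d}$ (rather than the total $|w(\alpha_0)|$ being lopsidedly concentrated at a single place, which would destroy the claimed power saving) is precisely the constraint in $T(c,c')$ bounding the ratios $\log|t_v|_v/\log|t_{v'}|_{v'}$: this is what equidistributes the scaling across archimedean places. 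You allude to this in passing via ``the coordinates of $T$'', but the paper makes this point explicitly, and without invoking $T(c,c')$ the bound $X|w(\alpha_0)|^{-1/d}$ is not forced. Also, in the number field case $L$ is a lattice in $V_n(F_S)$ which may have nonarchimedean $S$-components; the paper resolves this by writing $L$ as a translated union of finitely many lattices in $V_n(F_{S_\infty})$ before applying Davenport over $\R$, a step you fold into ``routine'' but that should be stated.
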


\begin{proof}
  Since the projection of $gG_0\cdot v_\sigma$ onto
  $V_n(F_{S\backslash S_\infty})$ is open compact, we may replace $L$
  by a union of finitely many lattices in $V_n(F_{S_\infty})$ and
  applying Proposition \ref{davgen} to each of these lattices gives
  the main term. The error term is then the maximum of the volumes of
  the images onto coordinate hyperplanes of $V_n(F_{S_\infty})$. Since
  $\alpha_0$ has minimal weight, we see that to achieve the maximum of
  the volumes (up to a bounded constant), we must include all the
  coordinate hyperplanes away from $\alpha_0$. We are thus done in the
  function field case by Proposition~\ref{davgen}. When $F$ is a
  number field, $V_n(F_{S_\infty})$ is a real vector space of
  dimension $\dim(V_n)d$. By the definition of $T(c,c')$, the
  projection of $(gG_0\cdot v_0)_X$ onto the $d$ $\R$-coordinates
  corresponding to $\alpha_0$ all have sizes bounded above and below
  by an absolute constant times $|w(\alpha_0)|^{1/d}$, and the maximum
  of the volumes of all projections, up to a bounded constant, is then
  achieved by taking one of these $d$ coordinates and projecting onto
  the coordinate hyperplanes corresponding to all the other
  coordinates.
% The main term is obvious from Davenport's Lemma. The error is the
% maximum of the volumes of the images onto coordinate
% hyperplanes. Since $\alpha_0$ has minimal weight, we see that the
% projection of $gG_0\cdot v_\sigma$ onto the $\alpha_0$-coordinate
% has volume $\gg 1$ by the definition of $\D_X'$.  Hence to achieve
% the maximum of the volumes, we need to include all the coordinate
% hyperplanes away from $\alpha_0$.  For any fixed $\alpha\in\var$,
% the projection of $gG_0\cdot v_\sigma$ onto the $\alpha$-coordinate
% can be thought of as a projection onto a copy of $F_{S_\infty}$ in
% $V(F_{S_\infty}).$ When $F$ is a number field, $F_{S_\infty}=\R^d$
% and we may include more coordinate hyperplanes at $\alpha_0$
% corresponding to some archimedean places in our projection. For any
% $t\in T(c,c')$ and any two places $\fP,\fP'\in M_\infty$, we have
% $|t|_{\fP_1}\asymp |t|_{\fP_2}$, it follows that if we further
% project $gG_0\cdot v_\sigma$ onto any two coordinate lines in
% $\R^d$, we obtain sets whose volumes are bounded by constant
% multiples of each other, where these constants are positive and
% independent of $g\in\D$. Hence to achieve the maximum, we can pick
% any archimedean place $\fP$ and include all the coordinate
% hyperplanes at $\alpha_0$ corresponding to all the other places.
\end{proof}

Integrating both sides of the equation in the above lemma over
$g\in\cup_i g_i^{-1}\FF_{i,X}'$, we obtain
\begin{equation}\label{appdavlem}
\begin{array}{rcl}
\displaystyle\int_{g\in g_i^{-1}\FF_{i,X}'}\#\{(gG_0\cdot v_\sigma)_X\cap g_i^{-1}\L\}&=&
\displaystyle\int_{g\in g_i^{-1}\FF_{i,X}'}\Vol_{g_i^{-1}\L}\bigl((gG_0\cdot
v_\sigma)_{X}\bigr)dg+O(X^{1-1/\epsilon'_n})\\[.2in]
&=&
\displaystyle\int_{g\in\FF_{i,X}'}\Vol_{\L}\bigl((gG_0\cdot
v_\sigma)_{X}\bigr)dg+O(X^{1-1/\epsilon'_n})\\[.2in]
&=&
\displaystyle\int_{g\in\FF_{i}}\Vol_{\L}\bigl((gG_0\cdot
v_\sigma)_{X}\bigr)dg+O(X^{1-1/\epsilon'_n}),
\end{array}
\end{equation}
where $\epsilon'_2=2d$, $\epsilon'_3=6d$, $\epsilon'_4=12 d$, and
$\epsilon'_5=40d$. The first equality follows from an explicit
computation for the integral of $|w(\alpha_0)|^{-1/d}$. The second
equality follows since $g_i\in G_n(F_S)^1$ is measure preserving. The
last equality follows from an element computation of the volume of
$\FF_{i}\backslash\FF_{i,X}'$.

\subsection{The number of nongeneric elements in the main body is
  negligible}\label{sec:mainnongensmall}

Let $\L^\ngen$ denote the set
of elements in $\L$ that are not generic. In this subsection, we prove
the following theorem.
\begin{theorem}
With notations as above, we have
\begin{equation}\label{eqngenmb}
\displaystyle\int_{\FF_{i,X}'}\#\{(gG_0\cdot v_\sigma)_X\cap\L^{\ngen}\}dg=o(X).
\end{equation}
\end{theorem}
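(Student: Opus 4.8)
The plan is to bound the number of nongeneric lattice points in the main body $\D_X'$ by stratifying the nongeneric locus according to the factorization type of the associated \'etale algebra, and showing that each stratum contributes $o(X)$. Recall that $v\in\L$ is nongeneric precisely when the degree $n$ algebra $\theta_F(v)$ is either not a field, or is a field whose normal closure has Galois group a proper subgroup of $S_n$. In the first case, $v$ lies on a proper subvariety $W\subset V_n$ defined over $F$ (cut out by the reducibility of the relevant resolvent/characteristic polynomial), and in the second case $v$ lies on a thin subset in the sense of Serre. The key point, which holds over any global field, is that the irreducible components of the reducible locus $W$ (other than the discriminant-zero locus) have dimension strictly less than $\dim V_n$, and that the cuspidal cutoff has already removed the problematic components where $\alpha_0$ vanishes identically.

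First I would reduce, exactly as in the cuspidal analysis, to integrating over $g=\lambda s\in\Lambda_X'T(c,c')$ against the measure $|\delta_n(s)|\,d^\times s\,d^\times\lambda$, since $N'$, $K''$ are compact. For each subvariety $W$ appearing, I would apply Proposition~\ref{davgen} (with $L$ replaced by the induced lattice on the linear span, when $W$ is linear, or by a direct lattice-point count on $W(F_S)$ refined by fibering over the coefficients that do not vanish on $W$), to get that $\#\{gG_0\cdot v_\sigma\cap \L\cap W(F_S)\}$ is bounded by $\prod_{\alpha\notin U_W}|w(\alpha)|$ up to lower-order projection terms, where $U_W$ is the set of coordinates vanishing on $W$. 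Integrating, the contribution is $\ll X\int (\prod_{\alpha\in U_W}|w(\alpha)|^{-1})|\delta_n(s)|\,d^\times s\,d^\times\lambda$, and one needs the $s_i$-exponents here to be negative with the $\lambda$-exponent making the integral converge to something $o(X)$ (strictly smaller power, or a convergent integral whose value is a constant so that the contribution is $O(X/\log X)$ or $O(X^{1-\epsilon})$ — in any case $o(X)$). The combinatorial inequalities required are on the characters $w(\alpha)$ of the torus $T$ of the derived group, and by the argument in the introduction they hold over a general global field if and only if they hold over $\Q$; since they were verified over $\Q$ in \cite{dodqf,dodpf} (and trivially for $n=2,3$), they hold here.

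The thin-set stratum — fields that are $S_n$-fields' complement, i.e.\ number-field-type $v$ with smaller Galois group — is handled separately: by work of S.\ D.\ Cohen's function-field Hilbert irreducibility / large sieve, or by the quantitative Hilbert irreducibility estimates already available over global fields (and used implicitly in \cite{dodqf,dodpf} over $\Q$), the number of such $v$ with $|\Delta_n(v)|\le X$ inside a box of the relevant shape is $O(X/\log X)$ or $O(X^{1-\delta})$, hence $o(X)$. Concretely one bounds this by a sieve over residue fields $k(\fP)$ for $\fP\notin S$ of bounded norm, using that for a positive proportion of such $\fP$ the reduction $v\bmod\fP$ must have a prescribed non-$S_n$ factorization type, which has density bounded away from $1$ in $V_n(k(\fP))$.

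The main obstacle I expect is the uniformity of the lattice-point count and the thin-set estimate \emph{over the varying lattices} $g_i^{-1}\L$ and \emph{with an archimedean box whose shape degenerates} as $\lambda\to\infty$ along $\Lambda_X'$: one must ensure the implied constants in Proposition~\ref{davgen} and in the Hilbert-irreducibility bound depend only on bounded data ($G_0$, $v_\sigma$, the finitely many $g_i$, and the constants $c,c',c_i$), not on $X$. In the number field case this is Davenport's lemma applied uniformly; in the function field case it is Proposition~\ref{prop:skewexp} of Appendix~A, whose statement is tailored precisely so that the error term $\Vol(\proj(tB))$ is controlled by a product of fewer than $\#U$ weights, which is what makes the integral converge. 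Once uniformity is in hand, the rest is the bookkeeping of checking the finitely many combinatorial inequalities, which reduces to the already-verified $\Q$ case.
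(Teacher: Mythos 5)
Your proposal has a genuine gap in ``case (i).'' The set of $v\in V_n$ with $\Delta_n(v)\neq 0$ for which $\theta_F(v)$ is not a field is \emph{not} a proper Zariski-closed subvariety $W$ of $V_n$ over $F$: over $\bar F$ every $v$ of nonzero discriminant satisfies $\theta_{\bar F}(v)\cong\bar F^n$, so the reducible locus is a thin set in Serre's sense, not a subvariety. The only genuine coordinate-vanishing subvarieties in the game (e.g.\ $a_{11}=b_{11}=0$, or $\det A=0$ for $n=4$) are precisely the cuspidal strata already treated in \S\ref{sec:cuspgensmall}; within the main body $\D_X'$, where $\alpha_0$ is bounded away from $0$, there is no closed subvariety to run your lattice-point-on-$W$ count over, so the proposed application of Proposition~\ref{davgen} together with the combinatorial inequalities on $w(\alpha)$ has no object to apply to. Those inequalities belong to the cusp estimate \eqref{eqcuspcutoff}, not to the present statement, and importing them here conflates the two parts of the argument.

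Your ``case (ii)'' sieve is in the right spirit, but you should apply it uniformly to \emph{all} nongeneric $v$, reducible or not. The paper's argument is a single short sieve driven by the Chebotarev observation: if $v$ is generic then every conjugacy class $\tau$ of $S_n$ arises as the local splitting type of $\theta_F(v)$ at a positive proportion of primes; conversely if $v$ is nongeneric (either $\theta_F(v)$ is not a field, or it is a field with Galois group a proper subgroup) then some conjugacy class $\tau$ is \emph{never} the splitting type at any $\fP\notin S$. Hence $\L^{\ngen}\subset\bigcup_\tau\bigcap_\fP\L^{\fP,\neq(\tau)}$, and by \cite{BST,BIII,BIV} the density $\mu_\tau(\fP)$ of the condition ``splitting type $\neq\tau$ mod $\fP$'' tends to $1-1/\#\Aut(\tau)<1$ as $N\fP\to\infty$. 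Imposing these finitely many congruence conditions for $N\fP<Y$ (which is legitimate by Theorem~\ref{thmaincountgon}) gives an upper bound $\ll X\prod_{N\fP<Y}\mu_\tau(\fP)$, and letting $Y\to\infty$ kills the constant. No Hilbert-irreducibility input, no subvariety counts, and no torus-character inequalities are needed here; the entire estimate is a density argument, and it applies to the reducible and the small-Galois-group strata in one stroke.
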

\begin{proof}
Let $\tau$ be a local splitting type corresponding to some conjugacy
class of $S_n$. In particular, $\tau$ is an unramified splitting
type. For a prime ideal~$\fP$ of $\cO_S$, let $\L^{\fP,\neq(\tau)}$
denote the set of elements in $v\in\L$ that do not have splitting type
$\tau$ at $\fP$.  Note that if $v\in\L$ is generic, then $v$ must have
splitting type $\tau$ at some (positive proportion of) primes.
Therefore, we have
\begin{equation*}
\L^{\ngen}\subset \bigcup_\tau\Bigl(\bigcap_\fP\L^{\fP,\neq(\tau)}\Bigr).
\end{equation*}

%We now bound
%$$
%\displaystyle\int_{\substack{g=\gamma_i(\lambda,n,t,k)\in\FF\\|\lambda|\ll X^{1%/4}\\|t|\ll|\lambda|^{1/3}}}\#(\bigcup_\fP\L^{\fP,(12)})dg.
%$$

The set $\L^{\fP,\neq (\tau)}$ is contained in the inverse image under the
reduction modulo $\fP$ map of the set $V_n(\F^{\neq (\tau)}_\fP)$
consisting of elements in $V_n(\F_\fP)$ that do not have splitting
type $(\tau)$. Denote by $\mu_\tau(\fP)$ the $\fP$-adic density of
$\L^{\fP,(\neq\tau)}$, that is, $\mu_\tau(\fP) \leq \#V_n(\F^{\neq
  (\tau)}_\fP)/\#V_n(\F_\fP)$. Then as $N\fP\to\infty$,
$\mu_\tau(\fP)$ is bounded above by a constant $c_\tau:=1/\#\Aut(\tau)$ which
is strictly between $0$ and $1$; this follows from \cite[Lemma
  18]{BST}, \cite[Lemma 21]{BIII}, \cite[Lemma 20]{BIV}
(see~\cite{Bmass} for the definition of $\#\Aut(\tau)$).

Let $Y>0$ be fixed. Then from the results of the last subsection we
see that
\begin{equation*}
\displaystyle\int_{\FF'_{i,X}}\#\{gG_0\cdot v_\sigma\cap
\bigcap_{N(\fP)<Y}\L^{\fP,\neq(\tau)}\}dg\ll X\prod_{N(\fP)<Y}\mu_\tau(\fP),
\end{equation*}
where the implied constant only depends on $X$. Letting $Y$ tend to
infinity, we obtain the result.
\end{proof}

\noindent {\bf Remark:} We note that an application of the Selberg
Sieve used exactly as in \cite{AJ} improves the right hand side of
\eqref{eqngenmb} to $O(X^{1-1/(5dd_n)+\epsilon})$, where $d_n$ is the
dimension of $V_n$.

\subsection{Conclusion}
By combining \eqref{eqavg}, \eqref{eqdivide},
\eqref{eqcuspcutoff}, \eqref{appdavlem}, and
\eqref{eqngenmb}, we obtain
\begin{equation}\label{eqfinalcountnf}
\begin{array}{rcl}
N(\L^{(\sigma)},\Gamma;X)&=&\displaystyle\frac{1}{\#\Aut(\sigma)\Vol(G_0)}\int_{g\in\FF}\Vol_\L
\bigl((gG_0\cdot v_\sigma)_X\bigr)dg + o(X)\\[.2in]
&=&\displaystyle\frac{1}{\#\Aut(\sigma)\Vol(G_0)}\int_{g\in G_0}\Vol_\L
\bigl((\FF g\cdot v_\sigma)_X\bigr)dg + o(X)\\[.2in]
&=&\displaystyle\frac{1}{\#\Aut(\sigma)}\Vol_\L
\bigl(\FF(X)\cdot v_\sigma\bigr)+o(X).
\end{array}
\end{equation}

Therefore, we have the following theorem.
\begin{theorem}\label{thmaincountgon}
Let $\L\subset V_n(F)$ be a sublattice commensurable with $V_n(\cO_S)$
and let $\Gamma\subset G_n(F)$ be a subgroup commensurable with
$G_n(\cO_S)$ that preserves $\L$. Fix an $S$-specification $\sigma$
and $v_\sigma\in V_n(F_S)^{(\sigma)}$. Let $\FF(X)$ be a fundamental
domain for the action of $\Gamma$ on $G_n(F_S)^{(\sigma)}_X$
constructed in {\rm $\S\ref{sec:fund}$}. Then
\begin{equation*}
N(\L^{(\sigma)},\Gamma;X)=\frac{1}{\#\Aut(\sigma)}\Vol_\L
\bigl(\FF(X)\cdot v_\sigma\bigr)+o(X).
\end{equation*}
\end{theorem}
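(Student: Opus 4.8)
The plan is to assemble Theorem~\ref{thmaincountgon} from the pieces already established in this section, following the structure of the averaging argument in \cite{dodpf}. The starting point is Theorem~\ref{thmfunddomain}, which identifies $N(\L^{(\sigma)},\Gamma;X)$ with $\frac{1}{\#\Aut(\sigma)}\#\{\FF_X\cdot v_\sigma\cap\L^{\gen}\}$. First I would invoke the averaging identity \eqref{eqavg}, which expresses this count as an integral over the fundamental domain $\FF_X$ (broken up as a sum over the finitely many pieces $\FF_{i,X}$) of the number of lattice points of $\L^{\gen}$ in the translated box $gG_0\cdot v_\sigma$; the point of averaging over the fixed bounded open set $G_0$ is to replace a single hard lattice-point count by an integral that can be estimated region by region. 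Next I would apply \eqref{eqdivide} to transfer each integral over $\FF_{i,X}$ to an integral over $g_i^{-1}\FF_{i,X}$ against the translated lattice $g_i^{-1}\L$, so that Proposition~\ref{davgen} may be applied to a lattice whose nonzero coefficients are uniformly bounded below.

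The core of the argument is then the decomposition of each region into a main body $g_i^{-1}\FF_{i,X}'\cdot v_\sigma$ and a cuspidal region $g_i^{-1}(\FF_{i,X}\setminus\FF_{i,X}')\cdot v_\sigma$, using the minimal-weight coordinate $\alpha_0$ as described before \S\ref{sec:cuspgensmall}. On the cuspidal region, \eqref{eqcuspcutoff} shows the number of generic points is $O(X^{1-\delta})=o(X)$. On the main body, the Lemma of \S\ref{sec:mainnongensmall}'s predecessor subsection (the ``main term'' lemma) replaces the point count by $\Vol_\L((gG_0\cdot v_\sigma)_X)$ with error $O(X|w(\alpha_0)|^{-1/d})$; integrating over $g_i^{-1}\FF_{i,X}'$ and using \eqref{appdavlem}, together with the elementary volume computation of $\FF_{i,X}\setminus\FF_{i,X}'$, lets me replace the integral over $\FF_{i,X}'$ by the integral over all of $\FF_{i,X}$, at the cost of an $O(X^{1-1/\delta_n})=o(X)$ error. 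Summing over $i$ then yields the first line of \eqref{eqfinalcountnf}. Finally, \eqref{eqngenmb} handles the remaining discrepancy: the volume integral over $\FF$ counts all lattice points, not just generic ones, but the nongeneric points in the main body contribute $o(X)$ by the sieve argument reducing to the densities $\mu_\tau(\fP)$ of non-split-type-$\tau$ elements. Unwinding the averaging identity in the reverse direction (the last two lines of \eqref{eqfinalcountnf}, using that $\Vol_\L$ and the $G_0$-average are compatible) gives $N(\L^{(\sigma)},\Gamma;X)=\frac{1}{\#\Aut(\sigma)}\Vol_\L(\FF_X\cdot v_\sigma)+o(X)$, which is exactly the claim.

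The main obstacle, and the step requiring the most care, is the cusp estimate feeding into \eqref{eqcuspcutoff}: one must verify that for every subset $U\subset\var$ closed under $\leq$ and containing $\alpha_0$, there exist nonnegative exponents $k_\alpha$ with $\sum k_\alpha<\#U$ making all the $s_i$-exponents of $\bigl(\prod_{\alpha\in U}w(\alpha)^{-1}\bigr)\bigl(\prod_{\alpha\in\min(U)}w(\alpha)^{k_\alpha}\bigr)\delta_n(s)$ negative. For $n=4$ this reduces (via the reducibility observations on $(A,B)$) to the two sets $U=\{a_{11}\}$ and $U=\{a_{11},a_{12}\}$, handled by taking weight $1$ and weight $w(a_{12})$ respectively; for $n=5$ it reduces to the combinatorial verification in \cite[Lemma~10]{dodpf} and \cite[Table~1]{dodpf}, which applies here because the characters of the maximal torus of the derived group are the same over $F$ as over $\Q$ (since $G_n$ is split over $\Z$). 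Everything else --- the reduction theory of Springer giving the Siegel domain \eqref{eq:Spred}, the geometry-of-numbers point count of Proposition~\ref{davgen} (which in the function-field case is Proposition~\ref{prop:skewexp} of Appendix~A), and the elementary volume bookkeeping --- is by now routine, and the function-field case differs from the number-field case only in replacing the $\R^+$-scaling $\iota$ by its discretized analogue, which does not affect any of the estimates.
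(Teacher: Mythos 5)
Your proposal is correct and reconstructs exactly the paper's argument: the paper obtains the theorem by combining \eqref{eqavg}, \eqref{eqdivide}, \eqref{eqcuspcutoff}, \eqref{appdavlem}, and \eqref{eqngenmb} into the chain of equalities \eqref{eqfinalcountnf}, and your write-up is a faithful prose expansion of that chain, including the correct identification of the cusp-exponent verification (the choice of weights $k_\alpha$ for each closed set $U\ni\alpha_0$, reduced to $U=\{a_{11}\}$, $\{a_{11},a_{12}\}$ for $n=4$ and to \cite[Lemma~10, Table~1]{dodpf} for $n=5$) as the one genuinely delicate step, and of Proposition~\ref{prop:skewexp} as the function-field replacement for Davenport's lemma.
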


\noindent {\bf Remark:} As in the previous subsection, the error term
of $o(X)$ in Theorem \ref{thmaincountgon} can be improved to
$O_\L(X^{1-1/(5dd_n)})$.

\section{Congruence conditions and a squarefree sieve}\label{sec:sieve}

Let $F$ be a fixed global field and let $S$ again be a nonempty finite
set of places of $F$ that contains all the archimedean places. Fix
lattices $\Gamma\subset G_n(F)$ and $\L\subset V_n(F)$ that are
commensurable with $G_n(\cO_S)$ and $V_n(\cO_S)$, respectively, and
such that $\Gamma$ preserves $\L$. For each prime ideal $\fP\subset
\cO_S$, let $Z_\fP$ be a compact subset of $V_n(F_\fP)^{\Delta\neq 0}$
whose boundary has measure 0 and is preserved by $\Gamma_\fP$, where
$\Gamma_\fP$ is the closure of $\Gamma$ in $G_n(F_\fP)$. To such a
collection $(Z_\fP)_\fP$, we associate the set $Z\subset\L$ consisting
of elements $v\in\L$ such that when viewed as an element of
$V_n(F_\fP)$, $v\in Z_\fP$ for all primes $\fP$ of $\cO_S$. Such a set
$Z$ is said to be {\it defined by congruence conditions}. We further
say that $Z$ is {\it large} if for all but finitely many primes $\fP$
in $\cO_S$, the set $Z_\fP$ contains all the elements $v\in
V_n(\cO_\fP)$ such that $\theta_{\cO_\fP}(v)$ is a maximal rank $n$
ring over $\cO_\fP$ and $\theta_{F_\fP}(v)$ is a degree-$n$ \'{e}tale
extension that is either unramified or have splitting type $(1^2\tau)$
where $\tau$ is an unramified splitting type of dimension $n-2$. We
say $v\in V_n(\cO_\fP)$ have {\it extra ramification} if
$\theta_{F_\fP}(v)$ is ramified and does not have splitting type
$(1^2\tau)$ where $\tau$ is an unramified splitting type of dimension
$n-2$. When the characteristic of $F$ is not $2$, being non-maximal or
having extra ramification is equivalent to $\fP^2$ dividing the
discriminant. When the characteristic of $F$ is $2$, we do not use
squarefree discriminants to define largeness since the discriminant is
always a square (even as a polynomial)! In this section, we prove the
following theorem.
\begin{theorem}\label{thsieve}
Let $Z$ be a large $\Gamma$-invariant set as above defined
via the local conditions $(Z_\fP)_\fP$. Then
\begin{equation}\label{eqaftersieve}
N(Z^{(\sigma)},\Gamma;X)=\frac{1}{\#\Aut(\sigma)}\Vol_\L(\FF(X)\cdot
v_\sigma)\prod_{\fP\subset\cO_S}\Vol(Z_\fP)+o(X),
\end{equation}
where the volume of $Z_\fP$ is computed with respect to the
Haar-measure on $V_n(\cO_\fP)$ normalized so that the total measure of
$\L_\fP$ is $1$.
\end{theorem}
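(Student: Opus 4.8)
The plan is to deduce the theorem from Theorem~\ref{thmaincountgon} by a sieve. By Theorem~\ref{thmaincountgon}, we have for each fixed finite set $Y$ of primes of $\cO_S$ and each collection of $\Gamma$-invariant sets $(W_\fP)_{\fP\in Y}$ that the count $N(W^{(\sigma)},\Gamma;X)$, where $W=\{v\in\L : v\in W_\fP\ \forall\fP\in Y\}$, equals $\frac{1}{\#\Aut(\sigma)}\Vol_\L(\FF_X\cdot v_\sigma)\prod_{\fP\in Y}\Vol(W_\fP)+o(X)$; this follows by applying Theorem~\ref{thmaincountgon} to the finite-index sublattice $\L'\subset\L$ cut out by the congruence conditions at the primes in $Y$ (together with its finitely many cosets), using that the density of $\L'$ inside $\L$ is exactly $\prod_{\fP\in Y}\Vol(W_\fP)$ by the Chinese remainder theorem. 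This handles all of the conditions at finitely many primes and gives the lower bound $N(Z^{(\sigma)},\Gamma;X)\ge \frac{1}{\#\Aut(\sigma)}\Vol_\L(\FF_X\cdot v_\sigma)\prod_{\fP}\Vol(Z_\fP)+o(X)$ upon letting $Y$ exhaust all primes, since $Z\subseteq W$ for any such truncation and the infinite product over all $\fP$ converges (the factors are $1+O(N\fP^{-2})$ for all but finitely many $\fP$, as $Z$ is large).

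The substance of the proof is the matching upper bound, i.e., showing that the tail contribution of elements failing a congruence condition at some large prime is negligible. First I would fix a parameter $Q$ and write, for $v\in\L$,
\[
N(Z^{(\sigma)},\Gamma;X)\le N(W_Q^{(\sigma)},\Gamma;X) + \sum_{\substack{\fP\subset\cO_S\\ N\fP>Q}} N\bigl(\{v\in\L : \fP^2\mid\Delta_n(v)\}^{(\sigma)},\Gamma;X\bigr),
\]
where $W_Q$ imposes the conditions $(Z_\fP)$ only for $N\fP\le Q$ (the finitely many bad primes being absorbed into this finite set). The first term is handled by the previous paragraph; the point is to bound the sum over large primes. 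This is exactly a squarefree-sieve / uniformity estimate: one needs a bound of the form
\[
\sum_{N\fP>Q} N\bigl(\{v\in\L:\fP^2\mid\Delta_n(v)\},\Gamma;X\bigr) = o(X) + O_\epsilon\!\left(\frac{X^{1+\epsilon}}{Q}\right) + (\text{error growing with }Q),
\]
valid uniformly in $Q$, so that choosing $Q=Q(X)\to\infty$ slowly makes all error terms $o(X)$; this is the analogue over a general global field of the uniformity estimate of \cite{Bgeosieve}, and as the introduction indicates, only mild modifications of the argument there are needed --- one splits the locus $\fP^2\mid\Delta_n(v)$ according to whether the ``reason'' for the divisibility is that $v$ is non-maximal at $\fP$ (controlled by a count of non-maximal points, which is sparse because such $v$ lie in a proper subvariety mod $\fP$) or that $v$ genuinely has a squarefree-failing discriminant for a maximal ring (controlled by counting $v$ whose associated ring has discriminant divisible by $\fP^2$, again a codimension condition), in each case combined with the geometry-of-numbers point count from \S\ref{sec:count} applied uniformly over the relevant sublattices.

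The main obstacle is precisely this uniformity estimate: one must count lattice points in $\FF_X$ satisfying a congruence modulo $\fP^2$ with an error term that is uniform in $\fP$ (in particular, beating the trivial bound $N\fP^{-2}\cdot X$ summed over $N\fP>Q$, which only gives $O(X/Q)$ and is already what one wants for the ``main'' part, but the subtlety is the cuspidal region and the non-maximal locus, where naive volume bounds fail). Over $\Q$ this is \cite{Bgeosieve}; the work here is to check that the reduction-theory and geometry-of-numbers inputs from \S\ref{sec:fund}--\S\ref{sec:count} --- in particular Proposition~\ref{davgen} with its projection error term, and the cusp estimate \eqref{eqcuspcutoff} --- can be run with the congruence condition in place and with constants independent of $\fP$. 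I would organize this by re-examining the proof of the cusp estimate and of the main-term lemma, inserting the mod-$\fP^2$ condition as a thinning of the lattice $\L$ by index a power of $N\fP$, tracking the dependence of all implied constants on $N\fP$, and verifying they contribute the claimed savings. Once the uniformity estimate is in hand, letting $Q\to\infty$ together with the lower bound from the first paragraph yields \eqref{eqaftersieve}.
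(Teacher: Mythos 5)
Your overall architecture matches the paper's: reduce to a tail (uniformity) estimate for the set $\W_\fP$ of $v$ with $\fP^2\mid\Delta_n(v)$, handle finitely many conditions via Theorem~\ref{thmaincountgon} applied to a finite-index sublattice, get the lower bound for free, and let the cutoff parameter grow. The paper states this tail estimate as Theorem~\ref{thunif} and then deduces Theorem~\ref{thsieve} by the sifting argument of \cite[\S2.7]{BS2}.

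The gap is in your sketch of the tail estimate itself. You split $\W_\fP$ into a ``non-maximal'' piece and a ``maximal but $\fP^2\mid\Disc$'' piece and assert that both are ``codimension conditions,'' so that thinning $\L$ mod $\fP^2$ and rerunning the geometry of numbers suffices. That fails for the second piece. The paper's decomposition is $\W_\fP=\W_\fP^{(1)}\cup\W_\fP^{(2)}$: $\W_\fP^{(2)}$ is the set of $v$ whose discriminant is \emph{strongly} divisible by $\fP^2$, i.e.\ the divisibility is detected mod $\fP$ because $v\bmod\fP$ lies in a fixed codimension-$2$ subscheme $Y$; for this piece the codimension argument (Theorem~\ref{thm:Th7F}, a direct generalization of \cite[Theorem~3.3]{Bgeosieve}) is exactly right. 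But $\W_\fP^{(1)}$ — the \emph{weakly} divisible elements, those of splitting type $(1^2\tau)$ with $\tau$ unramified — is cut out only by a codimension-$1$ condition mod $\fP$, refined by a further condition mod $\fP^2$. Running Proposition~\ref{davgen} on a lattice thinned this way does not give an error term that is uniform in $\fP$ with the needed savings: the Davenport-type projection error is governed by lower-dimensional slices, and the mod-$\fP$ constraint is only codimension $1$. The required bound $N(\W_\fP^{(1)},G_n(\cO_S);X)=O(X/N\fP^2)$ is proved instead by a ``division'' map: for $v\in\W_\fP^{(1)}$ one produces $g_\fP\in G_n(F_\fP)$ with $|\Delta_n(g_\fP v)|_\fP=|\Delta_n(v)|_\fP/N\fP^2$, globalizes it via the class-group decomposition~\eqref{eqstrongapproximation0} to get $\pi_\fP:G_n(\cO_S)\backslash\W_\fP^{(1)}\to\bigcup_{\gamma\in\cl_S}\Gamma_\gamma\backslash\L_\gamma$ with discriminant scaled by $C_1/N\fP^2$, and shows the fibers have absolutely bounded size. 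You omit this construction entirely. You also do not mention the new obstruction over general global fields that the paper flags: over $\Q$ the element $g_\fP$ can be taken in $G_n(\Q)$, whereas here the class group forces one to land in a possibly different lattice $\L_\gamma$ — this is precisely one of the ``mild modifications'' the paper makes to \cite{Bgeosieve}, and it needs the uniqueness of $\gamma$ guaranteed by Theorem~\ref{thmainparam}.
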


When $Z_\fP=V(\cO_\fP)$ for all but finitely many primes of $\cO_S$,
and is defined via finitely many congruence conditions at the other
finitely many primes, then $Z$ is a $\Gamma$-invariant lattice and the
results of Section \ref{sec:count} apply, yielding
\eqref{eqaftersieve}. When $Z$ is defined via infinitely many
congruence conditions, then we apply a simple sieve to obtain the
result. The key ingredient in this sieve is the following tail
estimate.

\begin{theorem}\label{thunif}
For a prime $\fP\subset \cO_S$, let $\W_\fP$ denote the set of
elements in $V_n(\cO_S)$ that are non-maximal or have extra
ramification at $\fP$. Then
\begin{equation}\label{equnif}
\sum_{N\fP>M}N(\W_\fP,G_n(\cO_S);X)=O(X/(M\log M))+o(X),
\end{equation}
where the implied constants are independent of $M$ and $X$.
\end{theorem}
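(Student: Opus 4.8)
The plan is to transport the geometric sieve of~\cite{Bgeosieve} to the present $S$-adelic setting. The case $n=2$ is elementary: there $\W_\fP=\{v\in\cO_S:\fP^2\mid v\}$, the number of generic $\cO_S^\times$-orbits on $(\W_\fP)_X$ is $O(X/(N\fP)^2)$, and $\sum_{N\fP>M}(N\fP)^{-2}=O(1/(M\log M))$ by the prime number theorem for $F$; so assume $n\in\{3,4,5\}$. First I would reduce to a lattice-point count in the main body (decomposing $\W_\fP$ according to the finitely many $S$-specifications and treating each separately, which I suppress from the notation). Applying the averaging identities \eqref{eqavg}--\eqref{eqdivide} with $\L=V_n(\cO_S)$ and $\Gamma=G_n(\cO_S)$, and bounding the cuspidal contribution via \eqref{eqcuspcutoff} applied to the lattices $g_i^{-1}\W_\fP\subset g_i^{-1}\L$, gives
\[
N(\W_\fP,G_n(\cO_S);X)\;\ll\;\sum_{i=1}^{k}\int_{g\in g_i^{-1}\FF_{i,X}'}\#\bigl\{gG_0\cdot v_\sigma\cap g_i^{-1}\W_\fP\bigr\}\,dg\;+\;O\bigl(X^{1-1/d_n}\bigr).
\]
Since a generic $v$ with $|\Delta_n(v)|_S\le X$ has $\fP^2\mid\Delta_n(v)$ for at most $O(\log X/\log M)$ primes $\fP$ with $N\fP>M$, summing that last term over these $\fP$ costs only a factor $O(\log X)$ and remains $o(X)$. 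On the main body the $\alpha_0$-coordinate is bounded below, so Proposition~\ref{davgen} (number-field case), resp.\ Proposition~\ref{prop:skewexp} of Appendix~A (function-field case), applies to count lattice points of $g_i^{-1}\W_\fP$ inside $gG_0\cdot v_\sigma$ once $g_i^{-1}\W_\fP$ is described by congruences; so it remains to estimate $\sum_{N\fP>M}\sum_{i}\int_{g_i^{-1}\FF_{i,X}'}\#\{\cdots\}\,dg$.

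Next I would record the geometric input and dispose of the small primes. The singular locus $Y$ of the discriminant hypersurface $\{\Delta_n=0\}\subset V_n$ has codimension $\ge2$ (equivalently $\dim Y\le d_n-2$), and $Y$ and $\{\Delta_n=0\}$ are cut out by polynomials of bounded degree; these are statements over the algebraic closure, which one checks case by case for $n=3,4,5$, and since $G_n$, $V_n$, $\Delta_n$ are defined over the prime field they hold over $F$ by base change. Split $\W_\fP=\W_\fP^{\mathrm{sg}}\sqcup\W_\fP^{\mathrm{sm}}$ according to whether $\bar v\bmod\fP$ lies on $Y$ or is a smooth point of $\{\Delta_n=0\}$. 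The $\fP$-adic density of $\W_\fP^{\mathrm{sg}}$ is $\#Y(k(\fP))/(N\fP)^{d_n}\ll(N\fP)^{-2}$ since $\dim Y\le d_n-2$, and that of $\W_\fP^{\mathrm{sm}}$ is also $\ll(N\fP)^{-2}$, the condition there being codimension $1$ modulo $\fP$ and then codimension $1$ modulo $\fP^2$. For $M<N\fP\le X^{\epsilon_n}$ with $\epsilon_n>0$ small enough (e.g.\ $\epsilon_n=1/(2d_n)$), counting $\W_\fP^{\mathrm{sg}}$ modulo $\fP$ and $\W_\fP^{\mathrm{sm}}$ modulo $\fP^2$ in the main body by Proposition~\ref{davgen}/Proposition~\ref{prop:skewexp} gives $\#\{v\in(\text{main body})\cap\W_\fP\}\ll X/(N\fP)^2+X^{1-1/d_n}$ with error uniform in $\fP$; summing over $M<N\fP\le X^{\epsilon_n}$ and using $\sum_{N\fP>M}(N\fP)^{-2}\ll 1/(M\log M)$ contributes $O(X/(M\log M))+o(X)$.

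The remaining, and main, point is the range $X^{\epsilon_n}<N\fP\le X^{1/2}$ (for $N\fP>X^{1/2}$ the set $\W_\fP$ is empty in the relevant discriminant range): here the congruence modulo $\fP^2$ is too fine for a term-by-term count, and I would instead run the one-dimensional geometric sieve of~\cite{Bgeosieve}. One fibers the main-body region by affine lines $\ell$ in a fixed direction transverse to $\{\Delta_n=0\}$: for all but a codimension-$\ge1$ family of lines, $\Delta_n|_\ell$ is a nonzero separable one-variable polynomial of degree $d_n$ and $\ell$ is disjoint from $Y$, and on such a line one bounds $\#\{v\in\ell:\fP^2\mid\Delta_n(v)\text{ for some }N\fP>M\}$ by distinguishing whether $\fP$ divides the nonzero, bounded-height discriminant of $\Delta_n|_\ell$; the exceptional lines meeting $Y$ form a lower-dimensional family precisely because $\dim Y\le d_n-2$, and contribute $o(X)$, while the one-variable lattice-point inputs over $F_S$ are the one-dimensional case of Proposition~\ref{prop:skewexp}. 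This is where the estimate is delicate: the content of~\cite{Bgeosieve} is precisely the careful control of the logarithmic factors that keeps the whole large-prime range at $O(X/(M\log M))+o(X)$ rather than, say, $O(X\log X)$. Since that combinatorial core is a statement about the degrees of polynomials on $V_n$ and the codimensions of subvarieties of $V_n$, which are the same over $F$ as over $\Q$, it follows from~\cite{Bgeosieve} once the lattice-point inputs are replaced by their $F_S$-analogues. Combining the three ranges yields the stated bound, with constants independent of $M$ and $X$.
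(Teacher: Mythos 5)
Your decomposition into "singular reduction" and "smooth-point" subsets matches the paper's split $\W_\fP=\W_\fP^{(2)}\cup\W_\fP^{(1)}$ (strong vs.\ weak divisibility), and your treatment of the strongly-divisible part — reduce to a codimension-$\ge 2$ subscheme $Y$ modulo $\fP$ and invoke the $F_S$-analogue of \cite[Theorem~3.3]{Bgeosieve} via Propositions~\ref{davgen}/\ref{prop:skewexp} — is essentially what the paper does with Theorem~\ref{thm:Th7F}. The problem is your third block: the proposed ``one-dimensional geometric sieve by fibering in lines'' does not give the needed bound for the weakly-divisible part $\W_\fP^{(1)}$ in the range $X^{\epsilon_n}<N\fP\le X^{1/2}$, and this is in fact the hard case the statement is designed to address.

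The issue is quantitative. On a typical line $\ell$, the restriction $\Delta_n|_\ell$ is a fixed separable polynomial in one variable. If $N\fP$ is very large relative to the side-length $\asymp X^{1/d_n}$ of the box cut out on $\ell$ (which is exactly the regime $N\fP^2\gg X^{2/d_n}$, so most of your third range), counting $v\in\ell$ with $\fP^2\mid\Delta_n(v)$ by a modulus-$\fP^2$ congruence in one variable gives an error term of order $1$ per line \emph{per prime}, and summing over $\gg X^{1/2}/\log X$ primes in the range destroys the bound. Bounding squarefree values of a one-variable polynomial of degree $\ge 4$ at large primes is, unconditionally, an open problem, and the ``discriminant of $\Delta_n|_\ell$'' device only tells you that roots mod $\fP$ are simple — it does not exclude $v$ with $\fP^2\mid\Delta_n(v)$ when there is at most one residue class to sample. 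The genuinely new ingredient in \cite[\S4.2]{Bgeosieve}, which the line-sieve does not reproduce, is the group-action embedding: for $v\in\W_\fP^{(1)}$ one produces $g_\fP\in G_n(F_\fP)$ with $g_\fP v$ integral and $|\Delta_n(g_\fP v)|_\fP = |\Delta_n(v)|_\fP / (N\fP)^2$, and this local element is glued (via the double-coset decomposition \eqref{eqstrongapproximation0}, which is where the class group enters over a general $F$) to a global map $\pi_\fP\colon G_n(\cO_S)\backslash\W_\fP^{(1)}\to\bigcup_{\gamma\in\cl_S}\Gamma_\gamma\backslash\L_\gamma$ with absolutely bounded fibers and the discriminant divided by $N\fP^2$. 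This gives $N(\W_\fP^{(1)},G_n(\cO_S);X)=O(X/N\fP^2)$ uniformly for \emph{all} $N\fP>M$ in one stroke, with no need to split into prime ranges; summing yields $O(X/(M\log M))$. You should replace your third block with this embedding argument; note also that adapting it from $\Q$ to $F$ is not a pure formality — one must pass through $\cl_S$ precisely because a global $g\in G_n(F)$ with the desired effect need not exist when the class group is nontrivial.
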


When $F=\Q$, this was proved in \cite[\S4.2]{Bgeosieve}. Some mild
arguments are needed to generalize it to arbitrary global fields. The
rest of this section is dedicated to this generalization.

We write $\W_\fP$ as the union $\W_\fP^{(1)}\cup\W_\fP^{(2)}$. The set
$\W_\fP^{(1)}$ consists of elements that are non-maximal but does not
have extra ramification and the set $\W_\fP^{(2)}$ consists of
elements that have extra ramification. When the characteristic of $F$
is not $2$, $\W_\fP^{(2)}$ consists of elements whose discriminants
are divisible by $\fP^2$ for (mod $\fP$) reasons and we say their
discriminants are strongly divisible by $\fP^2$ while for elements of
$\W_\fP^{(1)}$, we say their discriminants are weakly divisible
by~$\fP^2$.

For elements with extra ramification, there exists a subscheme $Y$ of
$\A_{\cO_S}^{d_n}$ of codimension $2$ such that
$$\W_\fP^{(2)}\subset \{v\in V_n(\cO_S): v(\mbox{mod }\fP) \in
Y(k(\fP))\}.$$ Recalling the shape of the fundamental domain for the
action of $G_n(\cO_S)$ on $V_n(F_S)$ in Section \ref{sec:fund}, we see
that in order to obtain the estimate \eqref{equnif} for
$\W_\fP^{(2)}$, it suffices to obtain the following estimate.

\begin{theorem}\label{thm:Th7F}
 Let $B$ be an open bounded region in $V_n(F_S)$, $Y$ be a closed
 subscheme of $\A_{\cO_S}^{d_n}$ of codimension $k\geq 2$ and let $M$
 be a positive real number. Let $r$ be a positive real number when $F$
 is a number field or an integer when $F$ is a function field of
 characteristic $p$ along with a fixed embedding of $\F_p(u)$ into
 $F$. Let $\iota$ denote the embedding of $\R^+$ or $\Z$ in $G_n(F_S)$
 described in Section \ref{sec:fund}. Then we have
\begin{eqnarray}
 \nonumber&&\#\{a\in \iota(r)B\cap V_n(\cO_S): a (\mbox{mod }\fP) \in Y(k(\fP))\mbox{ for some prime }\fP\notin S\mbox{ with }N\fP>M\} \\
 \label{eq:estF}&=& O\left(\frac{|\chi(\iota(r))|^2}{M^{k-1}\log M}\right) + o(|\chi(\iota(r))|^2),
\end{eqnarray}
as $|\chi(\iota(r))|$ tends to infinity, where the implied constant depends only on $B$, $Y$ and $F$.
\end{theorem}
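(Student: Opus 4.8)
The plan is to adapt the geometric sieve of \cite{Bgeosieve} to the global-field setting; the architecture of the argument is unchanged, and the only genuinely new ingredients are the function-field geometry of numbers of Appendix~A, the prime ideal theorem for $F$, and some bookkeeping with the $S$-norm. First I would make two preliminary reductions. The bound \eqref{eq:estF} is trivially implied by the pointwise estimate $\#(\iota(r)B\cap V_n(\cO_S))=O(|\chi(\iota(r))|^2)$ whenever $M$ stays bounded above (the first term on the right of \eqref{eq:estF} is then $\gg|\chi(\iota(r))|^2$), so we may assume $M\to\infty$ as $|\chi(\iota(r))|\to\infty$; in particular we may discard the finitely many primes $\fP$ at which the fibres of $Y$ over $\Spec\cO_S$ fail to behave well. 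Next, by a union bound it suffices to prove
$$\sum_{\substack{\fP\notin S\\ N\fP>M}}\#\{a\in\iota(r)B\cap V_n(\cO_S):a\ (\mathrm{mod}\ \fP)\in Y(k(\fP))\}=O\!\Bigl(\frac{|\chi(\iota(r))|^2}{M^{k-1}\log M}\Bigr)+o(|\chi(\iota(r))|^2).$$
Since $\iota(r)$ lies in the centre of $G_n$ and therefore scales all $d_n=\dim V_n$ coordinates of $V_n(F_S)$ by one and the same factor, and since one checks from the explicit formulas for $\Delta_n$ together with Table~\ref{tablereps} that $\deg\Delta_n=d_n$ in every case, the Jacobian of $\iota(r)$ acting on $V_n(F_S)$ equals $|\chi(\iota(r))|^2$; hence $\Vol(\iota(r)B)=|\chi(\iota(r))|^2\Vol(B)$. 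Put $V:=\Vol(\iota(r)B)$ and $T:=V^{1/d_n}$, which is comparable to the diameter of $\iota(r)B$, and split the sum above according to whether $N\fP\le T$ or $N\fP>T$.

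For primes with $M<N\fP\le T$ I would use equidistribution modulo $\fP$. Because $Y$ has codimension $k$, its reduction satisfies $\#Y(k(\fP))=O((N\fP)^{d_n-k})$ with implied constant depending only on $Y$, and counting the points of $V_n(\cO_S)\cap\iota(r)B$ lying in a fixed residue class modulo $\fP$ by Proposition~\ref{davgen} (Davenport~\cite{Lipschitz} when $F$ is a number field, Proposition~\ref{prop:skewexp} when $F$ is a function field) gives, using $k\ge2$ in the error term,
$$\#\{a\in\iota(r)B\cap V_n(\cO_S):a\ (\mathrm{mod}\ \fP)\in Y(k(\fP))\}=O\!\Bigl(\frac{V}{(N\fP)^k}\Bigr)+O\!\Bigl(\frac{T^{d_n-1}}{N\fP}\Bigr).$$
Summing over $M<N\fP\le T$: by the prime ideal theorem $\sum_{N\fP>M}(N\fP)^{-k}=O(1/(M^{k-1}\log M))$, so the first term contributes $O(V/(M^{k-1}\log M))$; and $\sum_{N\fP\le T}(N\fP)^{-1}=O(\log\log T)$ (Mertens' theorem over $F$), so the second term contributes $O(T^{d_n-1}\log\log T)=o(V)$. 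Thus this range is within the claimed bound.

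The range $N\fP>T$ is the crux, since there the equidistribution heuristic is vacuous; here I would run the fibration argument of \cite[\S3]{Bgeosieve}. After stratifying $Y$ into finitely many locally closed subschemes, on each stratum $Y$ lies in the zero locus of a nonzero polynomial involving a chosen coordinate, say the last. Fixing the first $d_n-1$ coordinates of $a$ — there are $O(T^{d_n-1})$ admissible choices, these being $\cO_S$-points of a box of diameter $\asymp T$ — the congruence $a\equiv(\text{a point of }Y)\pmod\fP$ either forces the last coordinate into one of $O_Y(1)$ residue classes modulo $\fP$, hence (as $N\fP>T$ exceeds the diameter, so each residue class meets the box in at most one $\cO_S$-value in that coordinate) into $O_Y(1)$ actual values, or else it forces the truncated point into a subscheme of $\A^{d_n-1}_{\cO_S}$ of codimension $\ge k\ge2$, whereupon one recurses on the number of coordinates, the base case being immediate. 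This yields
$$\#\{(a,\fP):\fP\notin S,\ N\fP>T,\ a\in\iota(r)B\cap V_n(\cO_S),\ a\ (\mathrm{mod}\ \fP)\in Y(k(\fP))\}=O(T^{d_n-1}),$$
so the range $N\fP>T$ contributes $O(T^{d_n-1})=O(V^{1-1/d_n})=o(|\chi(\iota(r))|^2)$, uniformly in $M$. Adding the two ranges proves the theorem. The main obstacle is precisely this last range: one must select coordinates adapted to $Y$ (possible after excluding finitely many primes, which is harmless by the first reduction) and supply the function-field version of the lattice-point count in a skewed box, which is exactly the content of Appendix~A; over a number field this step is \cite[\S4.2]{Bgeosieve} verbatim.
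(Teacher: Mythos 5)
Your overall structure matches the paper's: compute that $\iota(r)$ scales volumes by $|\chi(\iota(r))|^2$, split the primes at a threshold $T\asymp|\chi(\iota(r))|^{2/d_n}$, handle the range $M<N\fP\le T$ by Lang--Weil together with Proposition~\ref{davgen} and the prime-ideal/Mertens theorems over $F$, and handle $N\fP>T$ by the Ekedahl--Bhargava fibration argument. The paper itself records only the volume identity and then declares the remainder ``a direct generalization of \cite[Theorem~3.3]{Bgeosieve}''; you are following the same route, with the small-prime range correctly worked out.

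Your sketch of the large-prime range, however, does not close as written. You fix the first $d_n-1$ coordinates $a'$ (with $O(T^{d_n-1})$ choices), observe that for a given prime $\fP$ with $N\fP>T$ the last coordinate is confined to $O_Y(1)$ residue classes and hence $O_Y(1)$ actual values, and conclude that $\#\{(a,\fP)\}=O(T^{d_n-1})$. But that is a bound for each fixed prime; summed over the infinitely many $\fP$ of norm $>T$, nothing you say controls the total. The mechanism that controls it --- and the place codimension $\ge 2$ is actually used --- is that the Zariski closure of the image $\pi(Y)\subset\A^{d_n-1}$ has codimension $\ge 1$, hence is contained in the zero locus of some nonzero $f_2\in\cO_S[x_1,\dots,x_{d_n-1}]$. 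The condition $a'\bmod\fP\in\pi(Y)(k(\fP))$ then forces $\fP\mid f_2(a')$; since $f_2(a')\in\cO_S$ has norm $\ll T^{\deg f_2}$, only $O_Y(1)$ primes of norm $>T$ can divide it when $f_2(a')\neq0$, while the locus $f_2(a')=0$ has $O(T^{d_n-2})$ lattice points and one recurses there. Your dichotomy (finite fiber versus positive-dimensional fiber, the latter confined to a codimension-$\ge k$ bad locus) is a different stratification and by itself leaves the sum over $\fP$ unbounded. Your citation to \cite[\S 3--4]{Bgeosieve} does cover the correct step, but the paraphrase in the proposal would not, on its own, produce the claimed $O(T^{d_n-1})$.
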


\begin{proof}
Let $\lambda\in F_S$ be such that $\iota(r) = g(\lambda)$. That is,
when $F$ is a number field, $\lambda$ is the adele that is $r^\kappa$
at all the infinite places and $1$ elsewhere; and when $F$ is a
function field, $\lambda$ is the adele that is $u^r$ at all the places
of $S_\infty$ and $1$ elsewhere. Since $g(\lambda)$ acts on $V_n$ by
scaling each coordinate by $\lambda$, we see that it scales the volume
of any open bounded region by $|\lambda|^{d_n} =
|\chi(g(\lambda))|^2$. This explains the order of magnitude in
\eqref{eq:estF}.
%Note here we used the fact that the dimension of $V_n$ equals the
%homogeneous degree of $\Delta_n$!
The remainder of the estimate is then a direct generalization of
\cite[Theorem~3.3]{Bgeosieve}.
%\add{Do we want to add more detail? Or leave it to the GeneralCounting paper?}
\end{proof}

To deal with $\W_\fP^{(1)}$, the key strategy of \cite{Bgeosieve} is
that for any $v\in \W_\fP^{(1)}$, there exists an element $g\in
G_n(\Q)$ such that $\Delta_n(g.v) = \Delta_n(v)/p^2.$ Over a general
global field, such a global element $g$ might not exist due to the
class group. However one can still do it locally. Define $B_\fP^{(1)}$
to be the subset of
$V_n(\cO_\fP)$ consisting of elements that are non-maximal but does not have extra ramification.

Now take any element $v$ of $\W_\fP^{(1)}$. Then its specialization
$v_\fP$ when viewed as an element of $V_n(\cO_\fP)$ is in
$B_\fP^{(1)}$. Since $\theta_{\cO_\fP}(v_\fP)$ is not maximal, there exists $g_\fP\in
G_n(F_\fP)$ such that $g_\fP v_\fP\in V_n(\cO_\fP)$ is maximal and
with $$|\Delta_n(g_\fP v_\fP)|_\fP \leq |\Delta_n(v_\fP)|_\fP/N\fP^2.$$
Consider the adele $g\in G_n(\A_S)$ that equals $g_\fP$ at the place
$\fP$ and is $1$ everywhere else. From the decomposition
\eqref{eqstrongapproximation0}, there exists $\gamma\in\cl_S$, $h\in
G_n(F)$, $h_{\fP'}\in G_n(\cO_{\fP'})$ for any $\fP'\notin S$ such
that $g'=(h_{\fP'})_{\fP'\notin S}\gamma h$ in $G_n(\A_S)$. Set $v'\in
hv\in V_n(F)$. Then $v'$ lies in $\L_\gamma$ with
$$|\Delta_n(v')| \leq C_1\frac{|\Delta_n(v)|}{N\fP^2}$$ where $C_1 =
|\chi(\gamma)|^{-2}$ is a constant depending only on $G_n$ and $F$. We
have therefore defined the following map
$$\pi_\fP:G_n(\cO_S)\backslash \W_{\fP}^{(1)}\rightarrow
\bigcup_{\gamma\in\cl_S} \Gamma_\gamma \backslash \L_\gamma$$ such
that $|\Delta_n(\pi_\fP(v))| = C_1|\Delta_n(v)|/N\fP^2$.  We remark
that the choice of $\gamma$ is unique as it is determined by the field
extension corresponding to $v$ (Theorem~\ref{thmainparam}).
%In other situations, the choice of $\gamma$ might not be unique and
%one should view $\pi_\fP$ as a multi-valued map.

Finally the same argument as in \cite[\S4.2]{Bgeosieve} shows that the
fibers of $\pi_\fP$ have absolutely bounded sizes (in fact, they have
cardinality at most $10$). Hence we conclude that
\begin{equation}\label{eq:main_unif_eq}
N(\W_\fP^{(1)},G_n(\cO_S);X) = O(X/N\fP^2).    
\end{equation}
Summing over primes
$\fP$ such that $N\fP>M$ gives the desired estimate \eqref{equnif} for
$\W_\fP^{(1)}$, and we have proven Theorem~\ref{thunif}.

Theorem~\ref{thsieve} now follows from Theorems~\ref{thmaincountgon}
and \ref{thunif} via a sifting argument just as in \cite[\S2.7]{BS2}.

\vspace{.1in}
\noindent {\bf Remark 5.4:} The error term of Theorem \ref{thsieve} can be
improved as follows: first, note that by repeating the arguments of
\S4, the error term of Theorem \ref{thunif} can be improved to
$O(X/(M\log M)+X^{1-1/\epsilon'_n})$, where $\epsilon'_2=2d$,
$\epsilon'_3=6d$, $\epsilon'_4=12d$, and $\epsilon'_5=40d$. Second, we
use the inclusion-exclusion sieve instead of a sifting argument to
recover Theorem \ref{thsieve} from Theorems \ref{thmaincountgon} and
\ref{thunif}.  Finally, from the improvement to Theorem
\ref{thmaincountgon} explained in the remark immediately following it,
we see that the error term of Theorem~\ref{thsieve} can be improved to
$O_Z(X^{1-1/(5dd_n)})$.

\section{Application to field counting}\label{sec:fieldscounting}

Let $F$ be a fixed global field. Suppose the characteristic of $F$ is not $2$ when $n=2$. Let $S$ be a nonempty finite set of primes of $F$ containing
$M_\infty$. Suppose $S$ contains all places above $2$ when $n=2$.  Let $\Sigma=(\Sigma_\fP)$ be an acceptable collection of local
specifications for degree-$n$ extensions of $F$, such that
$\Sigma_\fP$ for $\fP\in S$ consists of only one isomorphism class of
extension of $F_\fP$.  Thus $(\Sigma_\fP)_{\fP\in S}$ defines an
$S$-specification $\sigma$.

Let $N_{\Sigma,S}(\L_\beta,\Gamma_\beta;X)$ denote the number of
$\Gamma_\beta$-orbits of elements $v$ of $\L_\beta$ such that
$|\Delta_n(v)|_S$ is at most $X$ if $F$ is a number field and is equal
to~$X$ if $F$ is a function field and such that $\theta_{F_\fP}(v)\in \Sigma_\fP$ for every prime
$\fP\in S$, and $\theta_{\cO_\fP}(v)$ is the ring of integers of a degree-$n$ extension of
$F_\fP$ contained in $\Sigma_\fP$ for every prime $\fP\notin
S$. For any prime $\fP\notin S$, we
write $V_n(\cO_\fP)^\max_{\Sigma_\fP}$ for the subset of
$V_n(\cO_\fP)^\max$ consisting of elements whose $G(F_\fP)$-orbit
corresponds to a degree-$n$ \'{e}tale extension of $F_\fP$ contained
in $\Sigma_\fP$.

We summarize the results of the previous sections in the following
theorem.

\begin{theorem}\label{thsieve22}
With notations as above, we have
\begin{equation}\label{eq:sieve2}
N_{\Sigma,S}(\L_\beta,\Gamma_\beta;X) = \frac{\Vol_{\L_\beta}(\FF(X)\cdot v_\sigma)}{\#\Aut(\sigma)}\prod_{\fP\notin S}\frac{\Vol_\fP(V_n(\cO_\fP)^\max_{\Sigma_\fP})}{\Vol_\fP(V_n(\cO_\fP))} + o(X).
\end{equation}
\end{theorem}

Let $N_{\Sigma,S}(F,\beta;X)$ denote the number of \'{e}tale
extensions $L$ over $F$ of degree $n$ such that its normal closure has
Galois group $S_n$, its local specification is contained in $\Sigma$,
at places outside of $S$ its $S$-Steinitz class is $(\chi(\beta))$,
and the norm of the $S$-discriminant of $L$ is at most $X$ if $F$ is a
number field and equal to~$X$ if $F$ is a function field. Suppose $L$
is one such field corresponding to some $v\in
\L_\beta^\max$. Theorem~\ref{thmainparam} gives the following equality of
$\cO_S$ fractional ideals,
$$\Disc_S(L) = (\chi(\beta))^2(\Delta_n(v)).$$
The norm of the ideal $\Disc_S(L)$ is then
\begin{equation}\label{eq:Snorm}
N(\Disc_S(L)) = \prod_{\fP\notin S}|\Disc_S(L)|_\fP^{-1} =
|\chi(\beta)|^{-2}\prod_{\fP\notin S}|\Delta_n(v)|_\fP^{-1} =
|\chi(\beta)|^{-2}\prod_{\fP\in S}|\Delta_n(v)|_\fP.
\end{equation} Hence to count
fields such that the norm of the $S$-discriminant is bounded by $X$,
we may count $\Gamma_\beta$-orbits in $\L_\beta^{\max}$ such that the
$S$-norm is bounded by $|\chi(\beta)|^2X.$ In other words,
\begin{equation}\label{eq:numberoffields}
N_{\Sigma,S}(F,\beta;X) = N_{\Sigma,S}(\L_\beta,\Gamma_\beta;|\chi(\beta)|^2X).
\end{equation}

We now compute all the local volumes appearing in
\eqref{eq:sieve2}. We abbreviate $V_n,G_n$ to $V,G$.

\section{Computing the product of local volumes}\label{sec:volume}

\subsection{From volumes in $V$ to volumes in $G$: Jacobian change of variable}

Observe first that the groups and the representations that we have
been using are all defined over $\Z$. Let $\uG$ and $\uV$ denote the
corresponding group and vector space defined over $\Z$ so that $G$ and
$V$ are the base changes of $\uG$ and $\uV$, respectively, to $F$. Let
$\omega_G$ $($resp. $\omega_V)$ be a top degree left-invariant
differential form that generates the rank $1$ module of top degree left-invariant
differential forms on $\uG$ $($resp. $\uV)$. Then $\omega_G$ and $\omega_V$
are well-defined up to sign. They induce Haar measures
$\omega_{G,\fP},\omega_{G,S}$ on $G(F_\fP)$ and $G(F_{S})$ and
similarly for $V$. Since $V$ is a direct sum of copies of $\bG_a$, the
volume of $V(\cO_\fP)$ computed with respect to $\omega_{V,\fP}$ is
$1$ for every finite prime and the covolume of $\cO_S$ in $V(\cO_{S})$
is $\sqrt{D_F}^{\,\dim V}$ where $D_F$ is the absolute discriminant of
$F$ (see \cite[\S2.1.1]{Weil}). Hence, we have
\begin{equation}\label{eq:covolume}
\Vol_{\L_\beta}(\FF(X)\cdot v_\sigma) = \sqrt{D_F}^{\,-\dim V}\,|\chi(\beta)|^{-2}\int_{\FF(X)\cdot v_\sigma} \omega_{V,S}(v).
\end{equation}
We wish to the compute the volumes of $\FF(X)\cdot v_\sigma$ in
$V(F_{S})$ and of $V(\cO_\fP)^{\rm max}_{\Sigma_\fP}$ in $V(F_\fP)$ by
the volumes of $\FF(X)$ in $G(F_{S})$ and $G(\cO_\fP)$ in $G(F_\fP)$
respectively. To do this, we require a Jacobian change-of-variable
formula.

Let $v$ be the generic point of $\uV$. We have a morphism $\pi_v:\uG\rightarrow \uV$ defined over $\Z$
sending $g$ to $\pi_v(g) = g.v$. Then there exists a polynomial $J:\uV\rightarrow \GG_a$ such that
\begin{equation}\label{eqjac1}
(\pi_v^*\omega_V)(g) = J(v)\chi(g)^2\omega_G(g)
\end{equation}
 since the top degree form $(\pi_v^*\omega_V)(g)/\chi(g)^2$ is
 left-invariant on $G$ (over $\Z)$. For any $h\in \uG$, considering the map $g\mapsto gh\mapsto gh.v$
 shows that $J(h.v)=\chi(h)^2J(v)$. This shows that
 $J$ is an integer $\J$ times the relative invariant
 $\Delta_n(v)$.

We have the following proposition computing the value of $\J$.
\begin{proposition}\label{propJvalue}
Let $\J$ be as above. Then $\J=2$ when $n=2$ and $\J=\pm1$ when $n=3,4,5$.
\end{proposition}
\begin{proof}
When $n=2$, the group $\underline{G_2}=\GG_m$ acts on $\underline{V_2}=\GG_a$ via $g\cdot
v=g^2v$. We tale $\omega_G = g^{-1}dg$ and $\omega_V=dv$. It is then easy to compute $\J$ to be $2$.

When $n=3,4,5$, we use Theorem \ref{thfieldparamR} for the rings
$\Z_p$ and $\F_p$ to compute the value of $|\J|_p$ for all primes
$p$. Consider the set $Y$ of elements in $\uV(\Z_p)$ whose $\uG(\Z_p)$-orbit corresponds to the rank $n$ ring $\Z_p^n$. The set $Y$ is a single $\uG(\Z_p)$-orbit and the size of
the stabilizer in $\uG(\Z_p)$ of an element in $Y$ is $n!$. From \eqref{eqjac1}, we obtain
\begin{equation}\label{eqjp1}
\Vol(Y)=\frac{|\J|_p}{n!}\Vol(\uG(\Z_p)),
\end{equation}
where the volumes of $Y$ and $\uG(\Z_p)$ are computed with respect
to the measures induced by $\omega_V$ and $\omega_G$, respectively.

The reduction $\bar{Y}$ of $Y$ in $\uV(\F_p)$ corresponds to the ring
extension $\F_p^n$ of $\F_p$, and consists of a single
$\uG(\F_p)$-orbit. The size of the stabilizer in $\uG(\F_p)$ of an
element in $\bar{Y}$ is again $n!$. Hence we have $\#\bar{Y} =
\#\uG(\F_p)/n!$.  The set $Y$ is the inverse image of $\bar{Y}$ under
the reduction modulo $p$ map. Indeed if $v\in \uV(\Z_p)$ such that the
$\theta_{\F_p}(v \!\!\mod \fP)=\F_p^n$,
then $\Delta_n(v)$ is nonzero modulo $p$. Hence $\theta_{\Q_p}(v)$ is a product of unramified extensions of $\Q_p$,
$\theta_{\Z_p}(v)$ is the product of their rings
of integers and $\theta_{\F_p}(v\!\!\mod\fP)$ is the product of their residue fields. Since all of
these residue fields are $\F_p$ and the extensions of $\Q_p$ are
unramified, it follows that the $\theta_{\Q_p}(v)=\Q_p^n$ and $v\in Y$.  Hence we have
\begin{equation}\label{eqjp2}
\Vol(Y)=\frac{\#\bar{Y}}{p^{\dim \uV}}=\frac{1}{n!}\frac{\#\uG(\F_p)}{p^{\dim \uG}},
\end{equation}
since the dimension of $\uG$ equals the dimension of $\uV$. Finally by Hensel's lemma (cf.\ \cite[Proposition 4.7]{G}),
\begin{equation}\label{eqjp3}
\Vol(\uG(\Z_p))=\frac{\#\uG(\F_p)}{p^{\dim \uG}}.
\end{equation}
Combining equations \eqref{eqjp1}, \eqref{eqjp2}, and
\eqref{eqjp3} gives $|\J|_p=1$ for all primes $p$. Alternately, one can compute $\J$ by an explicit
computation in the three cases.
\end{proof}

We thus have the following Jacobian change of variable formula.

\begin{proposition}\label{prop:Jchange}
Suppose the characteristic of $F$ is not $2$ if $n=2$. Then there exists a constant $\J\in F^\times$ such that
for any place $\fP$
of $F$, any nonzero element $v_0$ of $V(F_\fP)$, any measurable
function $\varphi$ on $V(F_\fP)$ and any measurable subset $\FF$ of
$G(F_\fP)$, we have
\begin{equation}\label{eq:Jchange}
\int_{\FF.v_0} \varphi(v)\omega_{V,\fP}(v) = |\Delta_n(v_0)|_\fP|\J|_\fP \int_{\FF} \varphi(g.v_0)|\chi(g)|_\fP^2 \omega_{G,\fP}(g),
\end{equation}
where as before $\FF.v_0$ is viewed as a multiset so that it is in
bijection with $\FF$. The value of $\J$ is $2$ when $(G,V)=(G_2,V_2)$
and the value of $\J$ is $\pm1$ when $(G,V)=(G_n,V_n)$ for $n=3,4,5$.
\end{proposition}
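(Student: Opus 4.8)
The plan is to upgrade the algebraic identity of differential forms established immediately above, namely $\pi_{v_0}^{*}\omega_V=J(v_0)\,\chi^{2}\,\omega_G$ on $G$ with $J=\J\,\Delta_n$ and $\J\in F^{\times}$, to the asserted identity of integrals over $F_\fP$ by invoking the change-of-variables formula for $F_\fP$-analytic maps. First I would dispose of the degenerate case: if $\Delta_n(v_0)=0$, then the orbit $G\cdot v_0$ lies in the proper closed subvariety $\{\Delta_n=0\}$ of $V$, so $\FF\cdot v_0$ is contained in a set of $\omega_{V,\fP}$-measure zero, while the right-hand side of \eqref{eq:Jchange} carries the factor $|\Delta_n(v_0)|_\fP=0$; hence both sides vanish and there is nothing to prove. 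So we may assume $\Delta_n(v_0)\neq 0$. Since $\Delta_n(g\cdot v_0)=\chi(g)^{2}\Delta_n(v_0)$ and $\chi$ takes values in $\Gm$, the function $\Delta_n\circ\pi_{v_0}$ is nowhere vanishing on $G$; combined with the fact (recalled just above) that $\pi_{v_0}$ is étale wherever its image avoids $\{\Delta_n=0\}$, this shows that $\pi_{v_0}\colon G\to V$ is an étale morphism, and therefore induces a local $F_\fP$-analytic isomorphism $G(F_\fP)\to V(F_\fP)$ at every point.

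Next I would apply the change-of-variables (substitution) formula with multiplicity. Let $\mu$ denote the measure on $G(F_\fP)$ attached in the usual way to the top-degree form $\pi_{v_0}^{*}\omega_V$: for nonarchimedean $\fP$ via the $\fP$-adic absolute value of its coefficients in any local coordinate system, and for archimedean $\fP$ via the module $|\cdot|_\fP=|N_{F_\fP/\R}(\cdot)|_\infty$, which is exactly the Jacobian factor relevant to the real/complex substitution rule. Covering $\FF$ by countably many measurable pieces $\FF_j$ on each of which $\pi_{v_0}$ is injective (possible because $\pi_{v_0}$ is everywhere a local analytic isomorphism), the classical substitution rule on each piece gives $\int_{\pi_{v_0}(\FF_j)}\varphi(v)\,\omega_{V,\fP}(v)=\int_{\FF_j}\varphi(g\cdot v_0)\,\mu(g)$. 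Summing over $j$, and using that the paper's multiset convention defines $\int_{\FF\cdot v_0}\varphi(v)\,\omega_{V,\fP}(v)$ to be $\int_{V(F_\fP)}\varphi(v)\,\#\{g\in\FF:g\cdot v_0=v\}\,\omega_{V,\fP}(v)$, we obtain $\int_{\FF\cdot v_0}\varphi(v)\,\omega_{V,\fP}(v)=\int_{\FF}\varphi(g\cdot v_0)\,\mu(g)$.

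Finally I would substitute the form identity. Because $\omega_V$, $\omega_G$ and $\chi$ are all defined over $F\subset F_\fP$, taking $\fP$-adic absolute values of coefficients in $\pi_{v_0}^{*}\omega_V=J(v_0)\,\chi^{2}\,\omega_G$ yields the identity of measures $\mu(g)=|J(v_0)|_\fP\,|\chi(g)|_\fP^{2}\,\omega_{G,\fP}(g)$ on $G(F_\fP)$. Since $J(v_0)=\J\,\Delta_n(v_0)$ with the \emph{same} rational constant $\J\in F^{\times}$ at every place, we have $|J(v_0)|_\fP=|\J|_\fP\,|\Delta_n(v_0)|_\fP$, and combining the last two displayed equalities gives exactly \eqref{eq:Jchange}.

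The only real obstacle is the careful formulation of the change-of-variables-with-multiplicity statement so that it applies uniformly at archimedean and nonarchimedean places, together with the verification that the multiset convention for $\FF\cdot v_0$ is precisely the device that absorbs the finite, constant fibre cardinality $\#\,\Stab_{G}(v_0)(F_\fP)$ which would otherwise appear as a spurious factor. The remaining ingredients — the identity $\pi_{v_0}^{*}\omega_V=J(v_0)\,\chi^{2}\,\omega_G$ and the factorization $J=\J\,\Delta_n$ with $\J\in F^{\times}$ independent of $\fP$ — have already been assembled in the discussion preceding the proposition, and the place-independence of $\J$ is exactly what will make the eventual product-of-local-volumes computation collapse to a Tamagawa number.
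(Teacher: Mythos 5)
Your proof follows exactly the approach the paper intends: the algebraic identity $\pi_{v_0}^{*}\omega_V = J(v_0)\,\chi^2\,\omega_G$ with $J=\J\,\Delta_n$, $\J\in F^\times$, established in the paragraph immediately preceding the proposition, is promoted to a measure identity by taking $\fP$-adic absolute values and invoking the analytic change-of-variables formula. The paper in fact gives no separate proof of the proposition, treating it as an immediate consequence of that discussion; your write-up merely supplies the analytic details (the degenerate case $\Delta_n(v_0)=0$, the local-isomorphism observation, the covering-by-injective-pieces step, and the role of the multiset convention in absorbing the fibre cardinality) that the paper leaves implicit.
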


We recall the definition of $\FF(X)$ from \S\ref{sec:fund}. Let
$\Omega$ denote a fundamental domain for the action of $\Gamma_\beta$
on $G(F_S)^1$ by left multiplication. When $F$ is a number field, let
$\Lambda_X$ denote the image
$\iota((0,(X/|\Delta_n(v_\sigma)|)^{1/2}])$ where $\iota$ is the
  embedding of $\R^+$ in $G(F_S)$ normalized such that
  $|\chi(\iota(r))| = r$ for every $r\in \R^{+}$. When $F$ is a
  function field, let $\Lambda_X$ denote an arbitrary element of
  $G(F_S)^{(\sigma)}_X.$ Using Proposition~\ref{prop:Jchange}, we
  obtain the following result:

\begin{equation}\label{eq:volinftyS2}
\int_{\FF(X)\cdot v_\sigma} \omega_{V,S}(v) =
\begin{cases}
\displaystyle|\Delta_n(v_\sigma)| \Bigl(\prod_{\fP\in S} |\J|_\fP \Bigr) \int_{\Lambda_X\Omega} |\chi(g)|^2 \omega_{G,S}(g) & \text{if }F\text{ is a number field},\\
\displaystyle X \Bigl(\prod_{\fP\in S} |\J|_\fP \Bigr) \int_{\Omega} \omega_{G,S}(g) & \text{if }F\text{ is a function field}.
\end{cases}
\end{equation}

For $V(\cO_\fP)^{\rm max}_{\Sigma_\fP}$ for a prime ideal $\fP$ of
$\cO_S$, we break it up into $G(\cO_\fP)$-orbits: $$V(\cO_\fP)^{\rm
  max}_{\Sigma_\fP} = \bigcup_{v_\fP\in G(\cO_\fP)\backslash
  V(\cO_\fP)^{\rm max}_{\Sigma_\fP}} G(\cO_\fP)v_\fP.$$ By
Theorem~\ref{thfieldparamR}, each orbit $G(\cO_\fP)v_\fP$ in
$V(\cO_\fP)^\max_{\Sigma_\fP}$ corresponds uniquely to an \'etale
extension $K$ over $F_\fP$ of degree $n$ with $|\Delta_n(v_\fP)|_\fP =
|\Disc(K/F_\fP)|_\fP$ and $K\in\Sigma_\fP$, so
\begin{eqnarray*}
\int_{G(\cO_\fP)v_\fP}\omega_{V,\fP}(v)&=&\frac{1}{\#\Stab_{G(\cO_\fP)}(v_\fP)}\int_{G(\cO_\fP).v_\fP}\omega_{V,\fP}(v)\\
&=&\frac{1}{\#\Stab_{G(F_\fP)}(v_\fP)} |\Delta_n(v_\fP)|_\fP|\J|_\fP\int_{G(\cO_\fP)}\omega_{G,\fP}(g)\\
&=&\frac{|\Disc(K/F_\fP)|_\fP}{\#\Aut(K/F_\fP)} |\J|_\fP \int_{G(\cO_\fP)}\omega_{G,\fP}(g).
\end{eqnarray*}
Hence adding up all the $G(\cO_\fP)$-orbits gives
\begin{equation}\label{eq:volp2}
\int_{V(\cO_\fP)^{\rm max}_{\Sigma_\fP}}\omega_{V,\fP}(v) =
\Bigl(\sum_{K\in \Sigma_\fP}
\frac{|\Disc(K/F_\fP)|_\fP}{\#\Aut(K/F_\fP)}\Bigr)|\J|_\fP
\int_{G(\cO_\fP)}\omega_{G,\fP}(g) = \frac{N\fP}{N\fP-1}m_{\fP,S}(\Sigma_\fP)|\J|_\fP\int_{G(\cO_\fP)}\omega_{G,\fP}(g),
\end{equation}
where recall $m_{\fP,S}(\Sigma_\fP)$ is defined by $$m_{\fP,S}(\Sigma_\fP) = \frac{N\fP-1}{N\fP}\sum_{K\in \Sigma_\fP}  \frac{|\Disc(K/F_\fP)|_\fP}{\#\Aut(K/F_\fP)}.$$

We now combine \eqref{eq:numberoffields}, \eqref{eq:covolume},
\eqref{eq:volinftyS2} and \eqref{eq:volp2}. Note that the local
factors $|\J|_\fP$ cancel out due to the product formula. We summarize
our computation in the following proposition.

\begin{proposition}\label{prop:VtoG}
With notations as above, we have
\begin{equation*}\label{eq:VtoG1a}
N_{\Sigma,S}(F,\beta;X) =
\displaystyle\frac{\sqrt{D_F}^{\,-\dim
    V}|\chi(\beta)|^{-2}|\Delta_n(v_\sigma)|}{\#\Aut(\sigma)}\prod_{\fP\notin
  S}\Bigl(m_{\fP,S}(\Sigma_\fP)\frac{N\fP}{N\fP-1}\int_{G(\cO_\fP)}\omega_{G,\fP}(v)\Bigr)\int_{\Lambda_{|\chi(\beta)|^2X}\Omega}
|\chi(g)|^2 \omega_{G,S}(g)+o(X)
\end{equation*}
if $F$ is a number field, and
\begin{equation*}\label{eq:VtoG1b}
N_{\Sigma,S}(F,\beta;X) =
\displaystyle\frac{\sqrt{D_F}^{\,-\dim V}X}{\#\Aut(\sigma)}\prod_{\fP\notin S}\Bigl(m_{\fP,S}(\Sigma_\fP)\frac{N\fP}{N\fP-1}\int_{G(\cO_\fP)}\omega_{G,\fP}(v)\Bigr)\int_{\Omega} \omega_{G,S}(g) +o(X)
\end{equation*}
if $F$ is a function field.
\end{proposition}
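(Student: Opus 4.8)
The plan is to prove Proposition~\ref{prop:VtoG} by a purely formal substitution, chaining together the five displayed identities \eqref{eq:numberoffields}, \eqref{eq:sieve2}, \eqref{eq:covolume}, \eqref{eq:volinftyS2}, and \eqref{eq:volp2}, and then collapsing the resulting product of local Jacobian factors $|\J|_\fP$ by means of the product formula for $\J\in F^\times$.

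Concretely, I would first use \eqref{eq:numberoffields} to replace $N_{\Sigma,S}(F,\beta;X)$ by $N_{\Sigma,S}(\L_\beta,\Gamma_\beta;|\chi(\beta)|^2X)$, and then invoke the counting formula \eqref{eq:sieve2} with $X$ replaced by $|\chi(\beta)|^2X$; since $\beta$ ranges over the finite set $\cl_S$, the quantity $|\chi(\beta)|$ is a bounded constant, so an error term of size $o(|\chi(\beta)|^2X)$ is $o(X)$. For each prime $\fP\notin S$, the ratio $\Vol_\fP(V_n(\cO_\fP)^{\max}_{\Sigma_\fP})/\Vol_\fP(V_n(\cO_\fP))$ is independent of the normalization of the local measure and, because $\omega_{V,\fP}(V_n(\cO_\fP))=1$, equals $\int_{V_n(\cO_\fP)^{\max}_{\Sigma_\fP}}\omega_{V,\fP}$, which by \eqref{eq:volp2} is $\tfrac{N\fP}{N\fP-1}\,m_\fP(\Sigma_\fP)\,|\J|_\fP\int_{G(\cO_\fP)}\omega_{G,\fP}$. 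For the $S$-part I would substitute \eqref{eq:covolume} and then \eqref{eq:volinftyS2} for $\Vol_{\L_\beta}(\FF_{|\chi(\beta)|^2X}\cdot v_\sigma)$: in the function-field branch the factor $|\chi(\beta)|^2$ coming from $\Lambda_{|\chi(\beta)|^2X}$ cancels the $|\chi(\beta)|^{-2}$ of \eqref{eq:covolume}, leaving $\sqrt{D_F}^{-\dim V}X\bigl(\prod_{\fP\in S}|\J|_\fP\bigr)\int_\Omega\omega_{G,S}$, while in the number-field branch the surviving normalization factor is $\sqrt{D_F}^{-\dim V}|\chi(\beta)|^{-2}|\Delta_n(v_\sigma)|\bigl(\prod_{\fP\in S}|\J|_\fP\bigr)$ multiplying $\int_{\Lambda_{|\chi(\beta)|^2X}\Omega}|\chi(g)|^2\omega_{G,S}$. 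Assembling these ingredients reproduces the two stated formulas up to an overall factor $\bigl(\prod_{\fP\in S}|\J|_\fP\bigr)\bigl(\prod_{\fP\notin S}|\J|_\fP\bigr)=\prod_{\fP}|\J|_\fP$, which is $1$ by the product formula since $\J\in F^\times$; this finishes the argument.

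Since everything is substitution, I do not expect a serious obstacle; the points requiring care are purely bookkeeping. The first is that the $o$-error and the rescaling $X\mapsto|\chi(\beta)|^2X$ must be tracked consistently through $\FF_X$ and $\Lambda_X$ (hence the argument $\Lambda_{|\chi(\beta)|^2X}$ in the number-field formula), with the two powers of $|\chi(\beta)|$ seen to cancel in the function-field case. The second is the minor normalization discrepancy at the primes $\fP\notin S$: the sieve \eqref{eq:sieve2} is phrased with Haar measure normalized so that $\L_{\beta,\fP}$ has mass $1$, whereas \eqref{eq:volp2} uses $\omega_{V,\fP}$ with $V_n(\cO_\fP)$ of mass $1$, but since only the ratio $\Vol_\fP(V_n(\cO_\fP)^{\max}_{\Sigma_\fP})/\Vol_\fP(V_n(\cO_\fP))$ appears, this discrepancy is irrelevant. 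Finally, one should note that the factor $|\Delta_n(v_\sigma)|$ surviving in the number-field formula is exactly the quantity absorbed into $X$ in the function-field formula, consistently with the definition of $G_n(F_S)^{(\sigma)}_X$ via $|\chi(g)|^2=X/|\Delta_n(v_\sigma)|$.
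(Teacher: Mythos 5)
Your proposal is correct and is essentially the paper's own proof: the paper explicitly says ``We now combine \eqref{eq:numberoffields}, \eqref{eq:covolume}, \eqref{eq:volinftyS2} and \eqref{eq:volp2}. Note that the local factors $|\J|_\fP$ cancel out due to the product formula,'' which is exactly your chain of substitutions (you also correctly note the implicit use of \eqref{eq:sieve2}). Your bookkeeping of the $|\chi(\beta)|$ powers in the two cases and of the $\prod_\fP|\J|_\fP=1$ cancellation matches what the paper intends.
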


\subsection{Product of the volumes over $G$: Tamagawa number}\label{sec:product}

We now compute the product of the local volumes over $G$ using the
Tamagawa number. See Section~\ref{sec:tam} for the definition of the
Tamagawa number of a reductive group over a global field where the
(global) character group has rank $1$. The Tamagawa number of $G =
G_n$ is $1$ for $n=2,3,4,5.$ Note also that $\dim\,G = \dim\, V.$

When $F$ is a number field, we have
\begin{eqnarray*}
&\phantom{=}&\sqrt{D_F}^{\,-\dim V}|\chi(\beta)|^{-2}|\Delta_n(v_\sigma)|\Bigl(\prod_{\fP\notin S}\frac{N\fP}{N\fP-1}\int_{G(\cO_\fP)}\omega_{G,\fP}(v)\Bigr)\int_{\Lambda_{|\chi(\beta)|^2X}\Omega} |\chi(g)|^2 \omega_{G,S}(g) \\
&=& |\chi(\beta)|^{-2}|\Delta_n(v_\sigma)|
\underset{s=1}{\operatorname{Res\,}}\zeta_S(s)\,\tau_G^1\bigl(\prod_{\fP\notin S} G(\cO_\fP) \times \Omega\bigr) \int_{0}^{(|\chi(\beta)|^2X/|\Delta_n(v_\sigma)|)^{1/2}} \lambda^2 d^\times\lambda\\
&=& \frac12\,
\underset{s=1}{\operatorname{Res\,}}
\zeta_S(s)X\, \tau_G^1\bigl(\prod_{\fP\notin S} G(\cO_\fP) \times \Gamma_\beta \backslash G(F_{S})^1\bigr).
\end{eqnarray*}
When $F$ is a function field over $\F_q$, we have
$$\sqrt{D_F}^{\,-\dim V}X\Bigl(\prod_{\fP\notin S}\frac{N\fP}{N\fP-1}\int_{G(\cO_\fP)}\omega_{G,\fP}(v)\Bigr)\int_{\Omega} \omega_{G,S}(g) =\log q\,\underset{s=1}{\operatorname{Res\,}}
%\Res_{s=1}
\zeta_S(s)X\,\tau_G^1\bigl(\prod_{\fP\notin S} G(\cO_\fP) \times \Gamma_\beta \backslash G(F_{S})^1\bigr).$$
Hence by Proposition~\ref{prop:VtoG},
\begin{equation}\label{eq:VtoGtotau}
N_{\Sigma,S}(F,\beta;X) = c\frac{\Res_{s=1}\zeta_S(s)X}{\#\Aut(\sigma)} \, \tau_G^1\bigl(\prod_{\fP\notin S} G(\cO_\fP) \times \Gamma_\beta \backslash G(F_{S})^1\bigr)\prod_{\fP\notin S}m_{\fP,S}(\Sigma_\fP) + o(X),
\end{equation}
where $c=\frac12$ when $F$ is a number field and $\log q$ when $F$ is
a global field over $\F_q$.

\section{Proof of the main theorems}\label{sec:proof}

In this section we prove Theorems \ref{thmainfield}, \ref{thmainfield2} and \ref{thmainfieldS} in the case where the characteristic of $F$ is not $2$ when $n=2$. In Theorem \ref{thmainfieldS}, we also assume that $S$ contains all the places above $2$ when $n=2$. In the next section, we handle the case of quadratic extensions in any characteristic and any $S$ using a different representation.

We prove Theorem~\ref{thmainfieldS} first. Let
$\Sigma=(\Sigma_\fP)$ be an acceptable collection of local
specifications. For every $S$-specification $\sigma$ allowed in
$\Sigma$, let $\Sigma(\sigma)$ denote the subset of $\Sigma$ where the
local specifications at places of $S$ are given by $\sigma$. Summing up
\eqref{eq:VtoGtotau} over all the $\beta$, using the
formula $$\tau_G^1(G(F)\backslash G(\A)^1) = \sum_{\beta\in\cl_S}
\tau_G^1(\prod_{\fP\notin S} G(\cO_\fP) \times \Gamma_{\beta} \backslash
G(F_{S})^1),$$ and the fact that the Tamagawa number of $G$ is $1$, we
obtain
\begin{equation}\label{eq:VtoGtotausumbeta}
\sum_{\beta\in\cl_S} N_{\Sigma(\sigma),S}(F,\beta;X) = c\,\Res_{s=1}\zeta_S(s)X\frac{1}{\#\Aut(\sigma)} \, \prod_{\fP\notin S}m_{\fP,S}(\Sigma_\fP) + o(X).
\end{equation}
Adding up the $S$-specifications $\sigma$ then gives the desired
equation \eqref{eq:combinedcount2}.

Finally we prove Theorem~\ref{thmainfield2}. Let $\Sigma=(\Sigma_\fP)$
be an acceptable collection of local specifications. Let $S$ be any
nonempty finite set of places containing $M_\infty$ and when $n=2$ and
$F$ is a number field, all the places above $2$. Fix any
$S$-specification $\sigma=(L_\fP)_{\fP\in S}$ allowed in
$\Sigma$. Define
$$\Disc(\sigma) = \prod_{\fP\in S-M_\infty} |\Disc(L_\fP/F_\fP)|_\fP.$$
Suppose $L$ is a degree-$n$ extension of $F$ whose normal closure has Galois group $S_n$ and has local specification $\sigma$ at places of $S$. Then
$$N(\Disc_S(L)) = N(\Disc(L/F))\Disc(\sigma).$$
Hence, we have
\begin{eqnarray*}
N_n(F,X) &=& \sum_\sigma \sum_{\beta\in\cl_S} N_{\Sigma(\sigma),S}(F,\beta;X\Disc(\sigma))\\
&=&c\,\Res_{s=1}\zeta_S(s)X\sum_\sigma\frac{\Disc(\sigma)}{\#\Aut(\sigma)}\, \prod_{\fP\notin S}m_{\fP,S}(\Sigma_\fP) + o(X)\\
&=&c\,\Res_{s=1}\zeta_S(s)X\prod_{\fP\in M_\infty} \sum_{K\in \Sigma_\fP}\frac{1}{\#\Aut(K/F_\fP)}\prod_{\fP\notin M_\infty} \sum_{K\in \Sigma_\fP}  \frac{|\Disc(K/F_\fP)|_\fP}{\#\Aut(K/F_\fP)}.
\end{eqnarray*}
We have now proved Theoerem
\ref{thmainfield2}. Theorem~\ref{thmainfield} then follows from the
formulae \eqref{eq:localmassfinite}, \eqref{eq:localmassR},
\eqref{eq:localmassC} for the masses.

\section{Quadratic extensions of global fields}\label{sec:quadratic}
In this section, we count quadratic extensions of global fields,
thereby finishing the proof of Theorems \ref{thmainfield},
\ref{thmainfield2}, and \ref{thmainfieldS}. The $R$-orbits of the
representation $(G_2,V_2)$ described in Section \ref{sec:param} do not
yield all quadratic extensions of a PID $R$ if $2$ is not invertible
in $R$. To obtain a proof of the main theorems in the case $n=2$ that
is uniform over global fields of all characteristics, we consider the
following representation $V$ of $G$, where for a ring $R$ we have
\begin{equation*}
\begin{array}{rcl}
G(R)&=&\Bigl\{\Bigl(\begin{array}{cc}1 & \\ n &\lambda\end{array}\Bigr):
n\in R,\lambda\in R^\times\Bigr\},\\[.15in]
V(R)&=&\{x^2+axy+by^2:a,b\in R\}.
\end{array}
\end{equation*}
The group $G$ acts on $V$ via the action $\gamma\cdot
f(x,y)=f((x,y)\cdot\gamma)$ for $\gamma\in G$ and $f\in V$. Note that
the group $G$ is not reductive. The techniques in Sections 3 through 8
do not apply generally to representations of
non-reductive groups. However, they do apply in our case.
% due to the simplicity of $G$.

\subsection{Parametrization of quadratic extensions}
The ring of relative invariants for the action of $G$ on $V$ is freely
generated by one element, namely, the discriminant $\Delta(f)=a^2-4b$
of the binary quadratic form $x^2+axy+by^2$. The group of characters of
$G$ is generated by the determinant. For $\gamma\in G$ and $f\in V$, we have
\begin{equation*}
\Delta(\gamma\cdot f)=\det(\gamma)^2\Delta(f).
\end{equation*}

Let $R$ be a fixed ring. Given an element $f\in V(R)$, we can construct a ring
\begin{equation*}
R_f:=R[x]/f(x,1)
\end{equation*}
that is rank-$2$ as an $R$ module. The ring $R_f$ is generated
(as a module) by the elements $1$ and $x$, and the ring structure on
$R_f$ is determined by the equation $x^2=-ax-b$. There are two
automorphisms of $R_f$ over $R$: the trivial automorphism, and
$\sigma$ defined by $\sigma(1)=1$ and $\sigma(x)=-a-x$. Note these two automorphisms are the same precisely when $F$ has characteristic $2$ and $\Delta(f)=0$. We can then
compute the discriminant of $R_f$ over $R$ to be
\begin{equation*}
\disc(R_f)=\det\Bigl(\begin{array}{cc}1 & x\\1 & -a-x\end{array}\Bigr)^2
  = a^2-4b
\end{equation*}
and verify that
\begin{equation*}
\disc(R_f)=\Delta(f).
\end{equation*}
Clearly, the ring $R_f$ is independent of the $G(R)$-orbit of $f$.

Conversely, suppose $R$ is a PID. If $R'$ is a rank-$2$ ring over $R$, then we may find a basis of the form
$\{1,\omega\}$ is for $R'$ as a module over $R$ and the ring structure of $R'$ is determined by the equation
$\omega^2=-a\omega-b$ for some $a,b\in R$. Thus $R'\cong R_f$ for $f(x,y)=x^2+axy+by^2$. The choice of $w$ is well-defined up to $w\mapsto \lambda w - n$ for any $\lambda\in R^\times$ and $n\in R$. The effect on the multiplication table is exactly as the action of $G(R)$ on $V(R)$.

We therefore have the following theorem.
\begin{theorem}\label{thparam2}
Let $R$ be a PID. There is a bijection between the set of
$G(R)$-orbits on $V(R)$ and the set of
rank-$2$ rings over $R$. For any $f\in
V(R)$, with the additional assumption that $\Delta(f)\neq0$ if $F$ has characteristic $2$, we have
\begin{equation*}
\Aut(R_f)\cong \Stab_{G(R)}(f)\cong \Z/2\Z,
\end{equation*}
where $R_f$ is the ring corresponding to $R$.
\end{theorem}
\begin{proof}
Only the last assertion requires proof. The automorphism group of
$R_f$ is isomorphic to $\Z/2\Z$, independent of $f$. The theorem
follows since the stabilizer in $G(R)$ of an element $f=x^2+axy+by^2$
in $V(R)$ consists of two elements, namely the identity and the matrix
$\Bigl(\begin{array}{cc}1 &\\a & -1\end{array}\Bigr)$ in $G(R)$.
\end{proof}

The finiteness of class numbers for any affine algebraic group of finite type over a global field is proved by Borel \cite[Theorem 5.1]{Brl} in the number field case and by Conrad \cite[Theorem 1.3.1]{Cnrd} in the function field case. For the group $G$, this finiteness follows from a much easier argument (see below). The parametrization theorem for quadratic field extensions (cf. Theorem \ref{thmainparam}) then follows formally as in Section \ref{sec:param}.

\subsection{Class group, Tamagawa measure and reduction theory}

The group $G$ fits in a split exact sequence
%\begin{equation}\label{eq:exact}
%\xymatrix{1\ar[r]&\bG_a\ar[r]^{i}& G\ar[r]^{\det}&\bG_m\ar[r] \ar@/^/[l]^{j}& 1},
%\end{equation}
where $i$ sends $n$ to the matrix $\Bigl(\begin{array}{cc}1 &\\n & 1\end{array}\Bigr)$ and $j$ sends $\lambda$ to the diagonal matrix $\Bigl(\begin{array}{cc}1 &\\ & \lambda\end{array}\Bigr)$.
Hence as far as the class group, the Tamagawa measure and the reduction theory of $G$ are concerned, $G$ behaves just like the cartesian product $\bG_a\times \bG_m.$ More precisely, let $S$ be any nonempty set of places of $F$ containing $M_\infty.$ Then the maps $i:
\bG_a\rightarrow G$ and $j:\bG_m\rightarrow G$ define the following bijections of sets
\begin{eqnarray*}
\bigl(\prod_{\fP\notin S}G(\cO_\fP)\bigr)\backslash G(\A_S)/G(F) &\simeq& \bigl(\prod_{\fP\notin S}\bG_a(\cO_\fP)\bigr)\backslash \bG_a(\A_S)/\bG_a(F) \times \bigl(\prod_{\fP\notin S}\bG_m(\cO_\fP)\bigr)\backslash \bG_m(\A_S)/\bG_m(F)\\
&=&\bigl(\prod_{\fP\notin S}\bG_m(\cO_\fP)\bigr)\backslash \bG_m(\A_S)/\bG_m(F)\\
G(\A)^1 &\simeq& \bG_a(\A) \times \bG_m(\A)^1\\
G(F)\backslash G(\A)^1 &\simeq& \bG_a(F)\backslash \bG_a(\A) \times \bG_m(F)\backslash \bG_m(\A)^1\\
G(\cO_S)\backslash G(F_S)^1 &\simeq&\bG_a(\cO_S)\backslash \bG_a(F_S) \times \bG_m(\cO_S)\backslash \bG_m(F_S)^1
\end{eqnarray*}
The first bijection gives the finiteness of the class group of $G$. The second bijection defines the Tamagawa measure on $G(\A)^1$. The third bijection shows that the Tamagawa number of $G$ is $1$. Finally, the last bijection gives a fundamental domain for the action of $G(\cO_S)$ on $G(F_S)^1$. Note now once the height condition is imposed, the fundamental domain will be bounded so no cusp analysis needs to be done and the asymptotic for the number of integral orbits (cf. Theorem \ref{thmaincountgon}) follows as in Section \ref{sec:count}. We note that in general, the main difficulty with non-reductive groups is the cusp analysis, which does not pose an issue for us in this case because the group is very simple.

\subsection{Congruence conditions and a maximal sieve}

Next we impose infinitely many congruence conditions to go from
counting integral orbits to counting field extensions as in Sections
\ref{sec:sieve} and \ref{sec:fieldscounting}. Since we are working with degree-$2$ extensions, there are no extra ramifications. The needed
maximal sieve (cf. Theorem \ref{thunif}) then follows from the same method in
Section \ref{sec:sieve} that dealt with $\W_{\fP}^{(1)}$.

We note that this argument implies that if $R_f$ is non-maximal, then
after a change of variable using an element of $G(\cO_\fP)$,
$f(x,y)=x^2 + axy + by^2$ with $a\in \fP$ and $b\in\fP^2$. This is
also evident from the parametrization of quadratic rings
above. Indeed, if $R_f$ is strictly contained in a rank-$2$ ring
$R_f'$ with basis $\{1,w\}$, then there exists a positive integer $k$
such that $\{1,\pi_\fP^kw\}$ is a basis for $R_f$ where $\pi_\fP$ is a
uniformizer of $\cO_\fP$. From the multiplication table we see that
$a\in\fP^k$ and $b\in\fP^{2k}$. Note when the characteristic of $F$ is
$2$, we have $\Delta(f) = a^2$ and so $\fP^2\mid\Delta(f)$ if and only
if $\fP\mid a.$ This illustrates why squarefree-ness is not the
correct condition to use in characteristic $2$.

We now have the asymptotic for the number of quadratic extensions of
an arbitrary global field in terms of various local volumes in $V$
(cf. Theorem \ref{thsieve22} and equation \eqref{eq:numberoffields}).

\subsection{Proof of the main theorems}

We perform volume computations by pulling back to $G$ as in Section
\ref{sec:volume}. Since our parametrization theorem now works for any
PID, the proof of Proposition \ref{propJvalue} in the case $n=3,4,5$
applies to show that $\J=\pm1$. Alternatively, one may compute it
directly using the differential forms $\omega_V = da\wedge db,$
$\omega_G = \lambda^{-1}dn\wedge d^\times\lambda$ and the orbit map
given by
$$\Bigl(\begin{array}{cc}1 &\\n & \lambda\end{array}\Bigr).(a,b) =
  (\lambda a + 2n,\lambda^2 b + \lambda n a + n^2).$$ The Jacobian
  change of variable formula (cf. Proposition \ref{prop:Jchange}) then
  holds in any characteristic. We have shown above that the Tamagawa
  number of $G$ is $1$ and so we have the analogous cancelations to
  those in Section \ref{sec:product}.

Finally Theorems \ref{thmainfield}, \ref{thmainfield2}, and
\ref{thmainfieldS} follow from the same formal argument as in Section
\ref{sec:proof}.

\section{Unramified extensions of number fields}

In this section, we prove Theorems \ref{thclassgps} and
\ref{thurna}. Before turning to the proofs of these theorems, we
introduce the notion of splitting types. Let $K$ be a local field and
let $L$ be a finite etale extension of $K$. Then
$L=L_1\oplus\cdots\oplus L_k$ is a sum of local field extensions of
$K$. We say that the {\it splitting type} of $L/K$ is
$(f_1^{e_1}\ldots f_k^{e_k})$, where $f_i$ is the local degree of
$L_i/K$ and $e_i$ is the ramification degree of $L_i/K$. If $K$ is
archimedian, we set $f_i=[L_i:K]$ and $e_i=1$ for all $i$. For a
global field $F$, a finite extension $L/F$, and a place $\fP$ of $K$,
we say that the {\it splitting type} of $L/F$ at $\fP$ is the
splitting type of $L\otimes F_\fP/F_\fP$.

\subsection{Proof of Theorem \ref{thclassgps}}

Let $F$ be a fixed global field. We first compute the average size of
the $3$-torsion subgroups of the relative class groups of quadratic
extensions of $F$. Let $L$ be a quadratic extension of $F$. By
duality, the number of index-$3$ subgroups of $\Cl(L/F)$ is equal to
$(h_3(L/F)-1)/3$. Class field theory implies that the set of index-$3$
subgroups of $\Cl(L/F)$ is in bijection with the set of unramified
cubic extensions $L_3$ of $L$ such that $L_3/F$ is Galois with Galois
group $S_3$. There exists an intermediate extension $K$ between $L_3$
and $F$ with $[K:F]=3$, unique up to conjugacy. Furthermore, $K$ is
nowhere overramified over $F$, i.e., no prime ideal $\p$ of $F$ splits
as $\PP^3$. Conversely, given a nowhere overramified cubic
$S_3$-extension $K$ of $F$, let $L_3$ denote the Galois closure of $K$
over $F$. Then $L_3$ contains a unique subfield $L$ that is a
quadratic extension of $F$, and $L_3/L$ is unramified and thus
corresponds to an index-$3$ subgroup of $\Cl(L/F)$. Furthermore, the
relative discriminants of $K$ and $L$ over $F$ are the same. Given a
place $\fP$ of $F$, the splitting type of $L/F$ at $\fP$ is determined
by the splitting type of $K/F$ at $\fP$. These splitting types are
easily computed using a short and elegant note of Wood \cite{MWsp},
and we list them in the following table.

\begin{table}[ht]
  \centering
  \begin{tabular}{|c | c|}
    \hline
    Splitting type of $K$ at $\fP$ & Splitting type of $L$ at $\fP$\\
    \hline
    $(111)$&$(11)$\\
    $(3)$&$(11)$\\
    $(12)$&$(2)$\\
    $(1^21)$&$(1^2)$\\
    \hline
  \end{tabular}
\caption{Relation between cubic and quadratic splitting
  types}\label{tabcubicsp}
\end{table}

Table \ref{tabcubicsp} gives a map $\rho$ from the cubic splitting
types of $K$ to the quadratic splitting types of~$L$. Let
$\Sigma=(\Sigma_\fP)_\fP$ be an archimedeally pure acceptable
collection of local specifications for quadratic extensions of $F$. We
define the collection $\Lambda=(\Lambda_\fP)_\fP$ of local
specifications for cubic equations by setting
$\Lambda_\fP=\rho^{-1}(\Sigma_\fP)$. The acceptability of $\Sigma$
implies that $\Lambda$ is also acceptable. Furthermore, the above
discussion implies that we have
\begin{equation*}
  \displaystyle\frac{\displaystyle\sum_{L\in S_{2,\Sigma}(F,X)}(h_3(L/F)-1)/2}{\#S_{2,\Sigma}(F,X)}=
  \frac{N_{3,\Lambda}(F,X)}{2N_{2,\Sigma}(F,X)},
\end{equation*}
where the factor of $2$ in the denominator appears since
$N_{2,\Sigma}(F,X)$ counts quadratic extensions of $F$ weighted by a
factor of $1/2$. Using Theorem \ref{thmainfield2}, which computes the
asymptotics for $N_{n,\Sigma}(F,X)$, we therefore obtain
\begin{equation}\label{eqpf3tor2pf}
  \frac{\displaystyle\sum_{L\in S_{2,\Sigma}(F,X)}h_3(L/F)}{\#S_{2,\Sigma}(F,X)}=1+
  \frac{\displaystyle\prod_\fP m_{\fP}(\Lambda_\fP)}{\displaystyle\prod_\fP m_{\fP}(\Sigma_\fP)},
\end{equation}
where the quantities $m_\fP$ are defined in the statement of Theorem
\ref{thmainfield2} and the products are taken over all places $\fP$ of
$F$. For every non-archimedean place $\fP$, it is easy to see from
Table \ref{tabcubicsp} that $m_\fP(\Lambda)=m_\fP(\Sigma)$. When
$F_\fP=\C$, we have $K\otimes\F_p=\C^2$ and $L\otimes\F_\fP=\C^3$, and
it follows that $m_\fP(\Lambda_\fP)/m_\fP(\Sigma_\fP)=1/3$. If
$F_\fP=\R$, then $\Sigma_\fP$ has two choices: either
$\Sigma_\fP=\{\C\}$ which implies that $\Lambda_\fP=\{\R\times\C\}$
and $m_\fP(\Lambda_\fP)=m_\fP(\Sigma_\fP)$ or $\Sigma_\fP=\{\R^2\}$
which implies that $\Lambda_\fP=\{\R^3\}$ and
$m_\fP(\Lambda_\fP)/m_\fP(\Sigma_\fP)=1/3$. Part (a) of
Theorem~\ref{thclassgps} now follows from \eqref{eqpf3tor2pf}.

We now prove Part (b) of Theorem~\ref{thclassgps} by computing the
average size of the $2$-torsion subgroups of the relatice class groups
of cubic extensions of a global field $F$.

This time, we take $L$ to be a cubic extension of $F$. The number of
index-$2$ subgroups of $\Cl(L/F)$ is equal to $h_2(L/F)-1$. The set of
index-$2$ subgroups of $\Cl(L/F)$ is in bijection with the set of
unramified quadratic extensions $L_2$ of $L$ such that $L_2/F$ is
Galois with Galois group $S_4$. Denote the Galois closure of $L_4$ by
$F_{24}$. This yields a quartic extension $K$ of $F$ contained in
$F_{24}$, unique up to conjugacy, which is {\it nowhere overramified}
over $F$, i.e., no prime in $F$ has splitting type $(1^21^2)$,
$(2^2)$, or $(1^4)$ in $K$. Furthermore, the relative discriminants of
$L/F$ and $K/F$ are the same. Conversely, given a quartic nowhere
overramified $S_4$ extension $K$ of $F$, let $F_{24}$ denote the
Galois closure of $K$ over $F$. Let $L$ denote the cubic resolvent
field contained in $K_{24}$, and let $L_2$ denote the unique quadratic
extension of $L$ whose Galois closure over $F$ is $F_{24}$. Then
$L_2/L$ is unramified. The following table relates the splitting types
of not overramified primes in $K$ to their splitting types in $L$.

\begin{table}[ht]
  \centering
  \begin{tabular}{|c | c|}
    \hline
    Splitting type of $L$ at $\fP$ & Splitting type of $K$ at $\fP$\\
    \hline
    $(1111)$&$(111)$\\
    $(22)$&$(111)$\\
    $(112)$&$(12)$\\
    $(4)$&$(12)$\\
    $(13)$&$(3)$\\
    $(1^211)$&$(1^21)$\\
    $(1^22)$&$(1^21)$\\
    $(1^31)$&$(1^3)$\\
    \hline
  \end{tabular}
\caption{Relation between quartic and cubic splitting
  types}\label{tabquarticsp}
\end{table}

Table \ref{tabcubicsp} gives a map $\rho$ from the quartic splitting
types of $K$ to the cubic splitting types of~$L$. For an an
archimedeally pure acceptable collection of local specifications
$\Sigma=(\Sigma_\fP)_\fP$ for cubic extensions of $F$, we define the
acceptable collection $\Lambda$ and $\Lambda^+$ of local
specifications for quartic equations by setting
$\Lambda_\fP=\Lambda^+_\fP=\rho^{-1}(\Sigma_\fP)$ for nonarchimedean
$\fP$ and for $F_\fP=\C$, and setting $\Lambda_\fP=\{\R^4\}$,
$\Lambda^+_\fP=\{\R^4,\C^2\}$ when $\Sigma_\fP=\{\R^3\}$, and setting
$\Lambda_\fP=\Lambda^+_\fP=\{\R^2\oplus\C\}$ when
$\Sigma_\fP=\{\R\oplus\C\}$.  As in Part (a), applying Theorem
\ref{thmainfield2} yields the following:
\begin{equation}\label{eqpf3tor2pf1}
\begin{array}{rcl}
\displaystyle\frac{\displaystyle\sum_{L\in S_{3,\Sigma}(F,X)}h_2(L/F)}{\#S_{3,\Sigma}(F,X)}&=&1+
\displaystyle
\frac{\displaystyle\prod_\fP m_{\fP}(\Lambda_\fP)}{\displaystyle\prod_\fP m_{\fP}(\Sigma_\fP)};\\[.3in]
\displaystyle\frac{\displaystyle\sum_{L\in S_{3,\Sigma}(F,X)}h^+_2(L/F)}{\#S_{3,\Sigma}(F,X)}&=&1+
\displaystyle
\frac{\displaystyle\prod_\fP m_{\fP}(\Lambda^+_\fP)}{\displaystyle\prod_\fP m_{\fP}(\Sigma_\fP)}.
\end{array}
\end{equation}

The ratios $m_\fP(\Lambda_\fP)/m_\fP(\Sigma_\fP)$ and
$m_\fP(\Lambda^+_\fP)/m_\fP(\Sigma_\fP)$ can be easily computed using
Table \ref{tabquarticsp}, and are seen to be $1$ when $\fP$ is
nonarchimedean. When $F_\fP=\C$ the ratios are both $1/4$, when
$\Sigma_\fP=\{\R^3\}$ we have
$m_\fP(\Lambda_\fP)/m_\fP(\Sigma_\fP)=1/4$,
$m_\fP(\Lambda^+_\fP)/m_\fP(\Sigma_\fP)=1$, and when
$\Sigma_\fP=\{\R\oplus\C\}$ we have
$m_\fP(\Lambda_\fP)/m_\fP(\Sigma_\fP)=m_\fP(\Lambda_\fP)/m_\fP(\Sigma_\fP)=1/2$.
Part (b) of Theorem~\ref{thclassgps} thus follows from
\eqref{eqpf3tor2pf1}.

\subsection{Proof of Theorem \ref{thurna}}
We first prove Part (a). Fix $n\leq 5$, and let $M$ be an
$S_n$-extension of $F$ having degree-$n!$. We denote the subfield
(unique up to conjugacy) of $M$ having degree-$n$ over $F$ by $L$, and
the subfield having degree-$2$ over $F$ by $K$. We call $K$ the {\it
  quadratic resolvent} of $L$. It is known that $M$ is unramified over
$K$ at a place $\fP$ if and only if $L$ is simply ramified over $F$ at
$\fP$, i.e., the ramified terms in the splitting type at $\fP$ of
$L/F$ contain at most a $1^2$. In particular, if $F_\fP=\R$, then
$L\otimes F_\fP$ can only be $\R^n$ or $\R^{n-2}\oplus\C$, depending
on whether $K\otimes \F_\fP$ is $\R$ or $\C$, respectively. In
general, the splitting type of $L$ determines the
splitting type of
the quadratic resolvent of $L$. When $n=3$, this relation is listed in
Table \ref{tabcubicsp}. When $n=4$, this relation can be determined
from Tables \ref{tabcubicsp} and \ref{tabquarticsp} since the
quadratic resolvent of a quartic extension $L$ of $F$ is the quadratic
resolvent of the cubic resolvent of $L$. The relation between the
splitting types of quintic extensions $L$ of $F$ and their quadratic
resolvents $K$ are listed in the following table.

\begin{table}[ht]
  \centering
  \begin{tabular}{|c | c|}
    \hline
    Splitting type of $L$ at $\fP$ & Splitting type of $K$ at $\fP$\\
    \hline
    $(11111)$&$(11)$\\
    $(1112)$&$(2)$\\
    $(113)$&$(11)$\\
    $(122)$&$(11)$\\
    $(14)$&$(2)$\\
    $(23)$&$(2)$\\
    $(5)$&$(11)$\\
    $(1^2111)$&$(1^2)$\\
    $(1^212)$&$(1^2)$\\
    $(1^23)$&$(1^2)$\\
    \hline
  \end{tabular}
\caption{Relation between quintic and quadratic splitting
  types}\label{tabquinticsp}
\end{table}

Therefore, for $n=3$, $4$, and $5$, we have maps $\rho$ from
degree-$n$ splitting types to quadratic splitting types. Now, given an
archimedeally pure acceptable collection of local specifications
$\Sigma$ for quadratic extensions of $F$, we define the acceptable
collection of local specifications $\Lambda$ for degree-$n$ extensions
of $F$ by defining $\Lambda_\fP$ to be $\rho^{-1}(\Sigma_\fP)$. From
the above discussion, it follows that we have
\begin{equation*}
E_\Sigma(A_n,S_n)=\lim_{X\to\infty}\frac{N_{n,\Lambda}(F,X)}{2N_{2,\Sigma}(F,X)},
\end{equation*}
where the factor of $2$ in the denominator arises because quadratic
extensions of $F$ are counted with weight $1/2$ in
$N_{2,\Sigma}(F,X)$. From Theorem \ref{thmainfield2} we thus obtain
\begin{equation*}
E_\Sigma(A_n,S_n)=\frac{\prod_\fP m_\fP(\Lambda)}{2\prod_\fP m_\fP(\Sigma)}.
\end{equation*}
The ratio $m_\fP(\Lambda)/m_\fP(\Sigma)$ is easily seen to be one for
nonarchimedean $\fP$ from Tables \ref{tabcubicsp}, \ref{tabquarticsp},
and \ref{tabquinticsp}.  If $F_\fP=\C$, then $\Sigma_\fP=\C^2$,
$\Lambda_\fP=\C^n$, and $m_\fP(\Lambda)/m_\fP(\Sigma)=2/n!$. For
$F_\fP=\R$, there are two possible choices for $\Sigma_\fP$: if
$\Sigma_\fP=\{\R^2\}$, then $\Lambda_\fP=\R^n$ and
$m_\fP(\Lambda)/m_\fP(\Sigma)=2/n!$, while if $\Sigma_\fP=\{\C\}$,
then $\Lambda_\fP=\R^{n-2}\oplus\C$ and
$m_\fP(\Lambda)/m_\fP(\Sigma)=1/(n-2)!$. Part (a) of Theorem
\ref{thurna} therefore follows.

To prove Part (b), we proceed as follows. Fix $n=2$, $3$, or $4$. Let
$K$ be a quadratic extension of $F$. Let $L$ be a degree-$n$ extension
of $F$ such that the relative discriminant of $L$ over $F$ divides the
relative discriminant of $K$ over $F$. Then the composite field
extension $M$ of the Galois closure of $L$ over $F$ and $K$ is an
$(S_n,S_n\times C_2)$-extension of $K$ unramified at all finite
places. If we further assumed that for every place $\fP$ of $F$ such
that $F_\fP=\R$, we have $L\otimes F_\fP=\R^n$ or $\R^{n-2}\oplus\C$
depending on whether $K\otimes F_\fP=\R^2$ or $\C$, respectively, then
$M$ is an unramifie $(S_n,S_n\times C_2)$-extension of $K$. In
particular, note that every degree-$n$ extension $L$ of $F$,
satisfying these archimedean conditions, contributes to unramified
$(S_n,S_n\times C_2)$-extensions of infinitely many quadratic
extensions $K$ of $F$, namely those whose relative discriminants are
divisible by the relative discriminant of $L$ over $F$.

We fix a large integer $Y$. There are $\gg Y$ degree-$n$ simply
ramified extensions of $F$, satisfying any prescribed archimedean
conditions whose relative discriminants have norm bounded by $Y$. We
denote their relative discriminants, written with multiplicity, by
$d_1,d_2,\ldots, d_{tY}$. For sufficiently large $X$, each such
degree-$n$ field with relative discriminant $d_i$ contributes to the
count of unramified $(S_n,S_n\times C_2)$-extensions for $\gg
X/N(d_i)$ quadratic extensions of $F$ whose relative discriminant has
norm bounded by $X$, namely those quadratic fields whose discriminants
are divisible by $d_i$. Therefore, the average number of unramified
$(S_n,S_n\times C_2)$-extensions of quadratic extensions of $F$ whose
discriminants have norm bounded by $X$ is
$$
\gg \frac{1}{N(d_1)}+\frac{1}{N(d_1)}+\cdots \frac{1}{N(d_{tY})}.
$$ Since this sum diverges as $Y$ goes to infinity, Part (b) of
Theorem \ref{thurna} follows.

\appendix
\renewcommand{\thesection}{\Alph{section}}
\makeatletter
\renewcommand{\@seccntformat}[1]{%
  \ifcsname the##1\endcsname
    Appendix~\csname the##1\endcsname:~%
  \fi}
\makeatother
\renewcommand{\thesection}{\Alph{section}}
\renewcommand{\thetheorem}{\Alph{section}.\arabic{theorem}}

\section{Appendix A: Lattice-point counting over function fields}

In this appendix, we prove Proposition~\ref{davgen} in the function
field case estimating the number of lattice points in an open compact
region over function fields. We believe the results here may also be
of independent use elsewhere in applications involving
geometry-of-numbers over function fields.

Fix a smooth projective and geometrically connected algebraic curve
$\sc$ over $\F_q$ and let $F$ be its field of rational functions. The goal of this appendix is to prove Proposition \ref{davgen} relating the number of lattice points inside a nice region to
the volume of the region in the case of function fields. The main technique we use is the theory of
Fourier analysis and Poisson summation over function fields which we
recall from \cite{VF}.

For any place $v$ of $F$, choose a uniformizer $\pi_v$ and denote by
$q_v=q^{\deg v}$ the size of the residue field $k(v) =
\cO_v/\pi_v\cO_v$. Let $k_v$ denote the canonical exponent of $v$ and
let $\chi_v$ be the (continuous) additive character $F_v\rightarrow
\C^\times$ defined in \cite[\S 2.1.2]{VF}. The divisor class of
$\sum_v k_v[v]$ is the canonical class of $\sc$. It will not be
important to us what the precise definitions of $k_v$ and $\chi_v$
are. Note that since $F_v$ is a locally compact group, the image of
$\chi_v$ lands inside the unit circle. Let $d_vx$ denote the Haar
measure on $F_v$ normalized so that the measure of $\cO_v$ is
$q_v^{-k_v/2}.$

Let $S$ be a nonempty finite set of places of $F$. Write $F_S$ for the product
$\prod_{v\in S}F_v$; $\chi_S$ for the character $\prod_{v\in S}\chi_v$
on $F_S$; and $d_Sx$ for the measure $\prod_{v\in S}d_vx_v$ on
$F_S$. The Fourier transform of an integrable continuous function $f$
with respect to $S$ is defined to be
\begin{equation}\label{eq:defFourier}
 \FF_S f(y) = \int_{F_S} f(x)\chi_S(xy) d_Sx\quad\forall y\in F_S.
\end{equation}
In stark contrast to the archimedean case, the following lemma implies
that the Fourier transform takes a continuous function with compact
support to a continuous function with compact support.

\begin{lemma}\label{lem:FourCC}
 {\rm $(\cite[\S2.1.3]{VF})$} For any subset $B$ of $F_S$, denote by
 $\chi_B$ its characteristic function. Let $n_v$ be fixed integers for
 $v\in S$. Then
\begin{equation}\label{eq:FourCC}
 \FF_S \chi_{\prod_{v\in S} \pi_v^{n_v}\cO_v} = \left(\prod_{v\in S} q_v^{-k_v/2}\right) \chi_{\prod_{v\in S}\pi_v^{-n_v-k_v}\cO_v}.
\end{equation}
\end{lemma}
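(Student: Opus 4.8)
\emph{Proof proposal.} The plan is to reduce the statement to a single place and then carry out the standard computation of the integral of an additive character over a compact fractional-ideal disc. Since $\chi_S=\prod_{v\in S}\chi_v$ and $d_Sx=\prod_{v\in S}d_vx_v$, and since the characteristic function of the box $\prod_{v\in S}\pi_v^{n_v}\cO_v$ is the product of the one-variable characteristic functions $\chi_{\pi_v^{n_v}\cO_v}$, Fubini's theorem shows that $\FF_S\chi_{\prod_{v\in S}\pi_v^{n_v}\cO_v}$ equals the product over $v\in S$ of the one-variable Fourier transforms $\FF_v\chi_{\pi_v^{n_v}\cO_v}$; moreover a product of indicator functions of the sets $\pi_v^{-n_v-k_v}\cO_v$ is the indicator function of their product. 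Hence it suffices to prove, for each single place $v$ and integer $n=n_v$, that $\FF_v\chi_{\pi_v^n\cO_v}$ is the multiple of $\chi_{\pi_v^{-n-k_v}\cO_v}$ by the Haar volume $d_vx(\pi_v^n\cO_v)$.

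For the local statement I would compute, for $y\in K_v$,
\[
\FF_v\chi_{\pi_v^n\cO_v}(y)=\int_{\pi_v^n\cO_v}\chi_v(xy)\,d_vx ,
\]
and note that for fixed $y$ the map $x\mapsto\chi_v(xy)$ is a continuous character of the compact group $\pi_v^n\cO_v$. The one input needed about the objects imported from \cite{VF} is that the conductor of $\chi_v$ --- the largest fractional ideal on which $\chi_v$ is trivial --- is $\pi_v^{-k_v}\cO_v$; this is precisely what the canonical exponent $k_v$ records, and (as the text remarks) no finer information about $\chi_v$ or $k_v$ is required. Consequently the character $x\mapsto\chi_v(xy)$ is trivial on $\pi_v^n\cO_v$ if and only if $\chi_v$ is trivial on the ideal $y\pi_v^n\cO_v$, i.e.\ if and only if $y\pi_v^n\cO_v\subseteq\pi_v^{-k_v}\cO_v$, i.e.\ if and only if $y\in\pi_v^{-n-k_v}\cO_v$. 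When this holds the integral equals the total mass $d_vx(\pi_v^n\cO_v)$; when it fails the integrand is a nontrivial continuous character of a compact group, so the integral vanishes by translation-invariance of $d_vx$ (orthogonality of characters). This identifies $\FF_v\chi_{\pi_v^n\cO_v}$ as the asserted multiple of $\chi_{\pi_v^{-n-k_v}\cO_v}$, and multiplying over $v\in S$ yields \eqref{eq:FourCC}.

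I do not expect a genuine obstacle here; the statement is essentially a bookkeeping exercise once the conductor of $\chi_v$ is known. The only points that require a little care are: (i) using the normalization $d_vx(\cO_v)=q_v^{-k_v/2}$, together with $d_vx(\pi_v^n\cO_v)=q_v^{-n}\,d_vx(\cO_v)$, to pin down the constant in \eqref{eq:FourCC}; and (ii) quoting the precise conventions of \cite[\S2.1.2]{VF} for $\chi_v$ and $k_v$ to justify the conductor claim. One could alternatively first change variables $x\mapsto\pi_v^{\,n}x$ to reduce the one-variable claim to the case $n=0$, carrying the Jacobian $q_v^{-n}$; this is the shape in which such an identity typically appears in \cite{VF}.
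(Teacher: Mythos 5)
The paper itself offers no proof of this lemma---it is simply cited to \cite[\S2.1.3]{VF}---so there is no in-text argument to compare against. Your proof is the standard one: factor over places, identify the conductor of the local additive character, and invoke orthogonality of characters to kill the integral off the dual lattice. The reduction via Fubini, the identification of the condition $y\in\pi_v^{-n-k_v}\cO_v$, and the vanishing-by-orthogonality step are all correct, and this is exactly how one would verify the statement.

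There is, however, one discrepancy that you have not flagged. Your local computation gives
\[
\FF_v\chi_{\pi_v^n\cO_v}=d_vx(\pi_v^n\cO_v)\cdot\chi_{\pi_v^{-n-k_v}\cO_v}=q_v^{-n}\,q_v^{-k_v/2}\,\chi_{\pi_v^{-n-k_v}\cO_v},
\]
so after taking the product over $v\in S$ the constant is $\prod_{v\in S}q_v^{-n_v-k_v/2}$, not the $\prod_{v\in S}q_v^{-k_v/2}$ appearing in \eqref{eq:FourCC}. The factor $\prod_v q_v^{-n_v}$ is missing from the paper's formula; this can also be seen by applying Fourier inversion to $\chi_{\cO_v}$, or by evaluating both sides at $y=0$ (the left side is $\Vol\bigl(\prod\pi_v^{n_v}\cO_v\bigr)$). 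So the lemma as printed is a typo (it is correct only for $n_v=0$), and your last sentence, asserting that the local identity ``yields \eqref{eq:FourCC}'' upon multiplying, is not literally true. You should state the corrected constant explicitly and note the discrepancy. The slip is harmless to the rest of the paper: the only downstream use of the lemma is the support containment \eqref{eq:support}, and the main-term identity $\FF_S\chi_{tB}(0)=\Vol(tB)$ in Proposition~\ref{prop:homoexp} is computed directly from the definition of $\FF_S$, so the normalization constant in \eqref{eq:FourCC} never enters.
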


For any continuous function $f$ on $F_S$ with compact support, define
its \emph{conductor} $c(f)$ to be the biggest open neighborhood of $0$
in $F_S$ such that $f(x + y) = f(x)$ for any $x\in F_S$ and $y\in
c(f).$ Then Lemma~\ref{lem:FourCC} implies that
\begin{equation}\label{eq:support}
 c(f) = \prod_{v\in S}\pi_v^{c_v}\cO_v \Rightarrow \textrm{Supp}(\FF_Sf) \subset \prod_{v\in S}\pi_v^{-c_v-k_v}\cO_v.
\end{equation}

\begin{theorem}{\rm (Poisson Summation~$\cite[{\rm Lemma}~3.5.9]{VF}$} Define
\begin{eqnarray*}
 \cO_S &=& \{x\in F:v(x)\geq0,\forall v\notin S\},\\
 \cO_S^\perp &=& \{y\in F: v(y) \geq -k_v, \forall v\notin S\},
\end{eqnarray*}
where $v(x),v(y)$ denote the valuations of $x,y$ with respect to $v$
respectively. Then for any continuous function $f$ on $F_S$ with
continuous Fourier transform, we have
\begin{equation}\label{eq:poisson}
 \sum_{x\in \cO_S} f(x) = \prod_{v\notin S} q_v^{-k_v/2}\sum_{y\in \cO_S^\perp} \FF_Sf(y).
\end{equation}
\end{theorem}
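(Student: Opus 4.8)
The plan is to deduce \eqref{eq:poisson} from the abstract Poisson summation formula for a discrete cocompact subgroup $\Lambda$ of a locally compact abelian group $V$, applied with $V=K_S$, $\Lambda=\cO_S$, and $K_S$ identified with its Pontryagin dual through the pairing $(x,y)\mapsto\chi_S(xy)$. Three things then need checking: that $\cO_S$ is discrete and cocompact in $K_S$ with annihilator exactly $\cO_S^\perp$; that the covolume of $\cO_S$ with respect to $d_Sx$ equals $\prod_{v\notin S}q_v^{k_v/2}$; and the routine periodization argument, of which \eqref{eq:poisson} is the value at $x=0$.

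First I would settle the duality facts. Discreteness of both $\cO_S$ and $\cO_S^\perp$ in $K_S$ follows from the finiteness of Riemann--Roch spaces: the intersection of either with a box $\prod_{v\in S}\pi_v^{m_v}\cO_v$ is a finite-dimensional space $L(D)$ for a divisor $D$ assembled from the integers $m_v$ and the canonical class $\mathcal K=\sum_v k_v[v]$. Cocompactness of $\cO_S$ --- needed so that the right-hand sum is over a lattice --- is where the nonemptiness of $S$ enters: the natural map $K_S\to\A_K/\bigl(K+\prod_{v\in S}\{0\}\times\prod_{v\notin S}\cO_v\bigr)$ has kernel $\cO_S$ and is surjective, since the cokernel of $K\to\bigoplus_{v\notin S}K_v/\cO_v$ is $\varinjlim_n H^1(\sc,\cO(nD_S))=0$ for any effective divisor $D_S$ supported on $S$; by the open mapping theorem this identifies $K_S/\cO_S$, topologically, with the compact group on the right. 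Finally, if $y\in K$ satisfies $v(y)\ge -k_v$ at every $v\notin S$, then for $x\in\cO_S$ one has $v(xy)\ge -k_v$ for all $v\notin S$, hence $\chi_v(xy)=1$ there, and the triviality of $\prod_v\chi_v$ on $K$ forces $\chi_S(xy)=\prod_{v\notin S}\chi_v(xy)^{-1}=1$; thus $\cO_S^\perp$ is contained in the annihilator of $\cO_S$.

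Next, the covolume. Since $d_Sx$ is the restriction of the self-dual adelic measure, for which $\A_K/K$ has volume $1$, the identification above gives $\Vol(K_S/\cO_S)=\bigl(\prod_{v\notin S}\Vol_{d_vx}(\cO_v)\bigr)^{-1}=\prod_{v\notin S}q_v^{k_v/2}$; this is the one genuinely global input, and $\Vol(\A_K/K)=1$ is where the product formula and the identity $\sum_v k_v\deg v=2g-2$ are hidden. Running the same argument with $\cO_S^\perp=K\cap\bigl(K_S\times\prod_{v\notin S}\pi_v^{-k_v}\cO_v\bigr)$ yields $\Vol(K_S/\cO_S^\perp)=\prod_{v\notin S}q_v^{-k_v/2}$, which is exactly $\Vol(K_S/\cO_S)^{-1}$, the covolume of the annihilator of $\cO_S$; since $\cO_S^\perp$ is a lattice of this covolume sitting inside that annihilator, the two coincide, so $\cO_S^\perp$ is the full dual lattice.

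With all this in place, periodize: $\bar f(x)=\sum_{\gamma\in\cO_S}f(x+\gamma)$ is continuous on the compact group $K_S/\cO_S$, whose characters are precisely $x\mapsto\chi_S(xy)$ for $y\in\cO_S^\perp$, and unfolding the integral shows the $y$-th Fourier coefficient of $\bar f$ is $\Vol(K_S/\cO_S)^{-1}\FF_S f(y)$; evaluating the Fourier expansion at $x=0$ gives \eqref{eq:poisson}. The only analytic point is pointwise convergence of the Fourier series of $\bar f$ at $0$, and here it is harmless: by Lemma~\ref{lem:FourCC} and \eqref{eq:support} the transform $\FF_S f$ has compact support, so the sum over $\cO_S^\perp$ is finite. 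Alternatively one can check \eqref{eq:poisson} directly when $f$ is the indicator of a box $\prod_{v\in S}\pi_v^{n_v}\cO_v$, where Lemma~\ref{lem:FourCC} and the Riemann--Roch theorem make both sides explicit powers of $q$, and then extend by $\F_q$-linearity, any admissible $f$ being a finite combination of indicators of translates of its conductor. I expect essentially all the difficulty to lie in the two global normalization statements above --- cocompactness of $\cO_S$ for $S\ne\emptyset$, and the covolume $\prod_{v\notin S}q_v^{k_v/2}$, i.e.\ $\Vol(\A_K/K)=1$ --- the Fourier-analytic steps being routine.
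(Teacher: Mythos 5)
Your argument is sound. The paper does not give its own proof of this statement---it simply quotes it from \cite[Lemma~3.5.9]{VF}---so there is no internal argument to compare against; your derivation via the abstract Poisson summation formula for a discrete cocompact subgroup of a locally compact abelian group, together with the identifications $\Vol(K_S/\cO_S)=\prod_{v\notin S}q_v^{k_v/2}$ and $\cO_S^\perp=$ dual lattice of $\cO_S$, is the standard proof and is essentially what one finds in the cited source. The three ingredients you isolate are all correct and are indeed where the content lies: discreteness of $\cO_S$ via Riemann--Roch finiteness; cocompactness via strong approximation, which is exactly where $S\neq\emptyset$ is used; the self-dual adelic normalization giving $\Vol(\A_K/K)=1$ and hence the covolume; and the covolume-match argument pinning down $\cO_S^\perp$ as the \emph{entire} annihilator rather than a proper sublattice. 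The convergence issue in the periodization step is indeed vacuous for the locally constant, compactly supported test functions to which the theorem is applied later in Propositions~\ref{prop:homoexp} and~\ref{prop:skewexp}, where $\FF_Sf$ is itself compactly supported.

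One caveat about your ``alternatively'' paragraph. Lemma~\ref{lem:FourCC} as typeset in the paper reads $\FF_S\chi_{\prod\pi_v^{n_v}\cO_v}=\bigl(\prod_{v\in S}q_v^{-k_v/2}\bigr)\chi_{\prod\pi_v^{-n_v-k_v}\cO_v}$, but the constant should be the volume of the box, $\Vol\bigl(\prod\pi_v^{n_v}\cO_v\bigr)=\prod_{v\in S}q_v^{-n_v-k_v/2}$: a factor of $\prod_v q_v^{-n_v}$ is missing from the display as printed. (The paper only ever invokes the lemma for the support statement \eqref{eq:support} and for the value $\FF_S\chi_{tB}(0)=\Vol(tB)$, so the slip is harmless there, but it would matter for your check.) If you run the box-by-box verification literally with the printed constant, the two sides of \eqref{eq:poisson} disagree unless every $n_v=0$. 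With the corrected constant, the identity for the box $\prod_{v\in S}\pi_v^{n_v}\cO_v$ reduces, after inserting $\prod_{v}q_v^{-k_v/2}=q^{1-g}$ and $\prod_{v\in S}q_v^{-n_v}=q^{\deg D}$ for the divisor $D=-\sum_{v\in S}n_v[v]$, exactly to the Riemann--Roch relation between the dimensions of the Riemann--Roch spaces of $D$ and of (canonical class) $-D$. So your alternative route does close up, but only after correcting the constant in that lemma.
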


For any place $v$, the $v$-adic norm $|\alpha|_v$ of some $\alpha\in
F_v$ is defined to be $|\alpha|_v=q_v^{-v(\alpha)}.$ (This is the same
definition as given in Section \ref{sec:notation}.) For any $t=(t_v)\in F_S$,
its $S$-norm is defined to be
$$|t|_S = \prod_{v\in S}|t|_v = q^{-\sum_{v\in S}v(t)\deg v},$$
where we have abbreviated $v(t_v)$ and $|t_v|_v$ to $v(t)$ and $|t|_v$ respectively. We
are interested in the number of lattice points in a homogeneously
expanding region in $F_S$.

\begin{proposition}\label{prop:homoexp}
 Let $B$ be an open compact subset of $F_S$. Let $c$ be a positive real constant. Then for any $t=(t_v)\in F_S$ such that $|t|_v^{1/\deg v}/|t|_{v'}^{1/\deg v'}<c$ for every $v,v'\in S$,
\begin{equation}\label{eq:homoexp}
 \#\{tB \cap \cO_S\} =  \prod_{v\notin S} q_v^{-k_v/2}\Vol(tB) +O(1) = \Vol_{\cO_S}(tB) + O(1)
\end{equation}
where $\Vol$
denotes the volume computed with respect to $d_S$, and $\Vol_{\cO_S}$
is a constant multiple of $\Vol$ such that $F_S/\cO_S$ has volume $1$. The implied constant in the second summand depends only on $F$, $B$ and $c$. The dependency on $B$ is through a bound $M$ on the $S$-norm of elements of $B$ and the conductor $c(\chi_B)$ of the characteristic function of $B$.
\end{proposition}

\begin{proof}
Since $B$ is bounded, there exists a constant $M$ such that for any $b\in B$, $|b|_S<M.$ Suppose first that $|t|_S\leq1/M$. Then for any $b\in B$, $|tb|_S<1$ and so
$$\sum_{v\in S}v(tb)\deg(v) > 0.$$
On the other hand, if $b'\in \cO_S$, then $v(b')\geq0$ for every $v\notin S$ and $\sum_{v} v(b')\deg(v) = 0$. Hence we see that $tB\cap \cO_S = \emptyset$ while $tB$ is contained in the bounded open ball defined by $|b'|_S<1$. In this case, both sides of \eqref{eq:homoexp} are bounded by an absolute constant.

Suppose from now on that $|t|_S > 1/M$. Then the sum $\sum_{v\in S} -v(t)\deg(v)$ is bounded below by $-\log_q M$. By assumption on the proximity of $\log |t|_v$ for different $v$, we see that each individual $-v(t)$, for $v\in S$, is bounded below by some constant $C_1$ depending only on $M$ and $c$. Write the conductor of $\chi_B$ as $\prod_{v\in S}\pi_v^{c_v}\cO_v$
 and so $c(\chi_{tB}) = \prod_{v\in S}\pi_v^{c_v+v(t)}\cO_v.$ Hence by
 \eqref{eq:support},
$$\textrm{Supp}(\FF_S\chi_{tB})\subset \prod_{v\in S} \pi_v^{-c_v-v(t)-k_v}\cO_v.$$
If $y\in \textrm{Supp}(\FF_S\chi_{tB})\cap \cO_S^\perp$ is nonzero, then we must have
\begin{eqnarray*}
 v(y) &\geq& -k_v,\quad\forall v\notin S,\\
 v(y) &\geq& -c_v-v(t)-k_v,\quad\forall v\in S.
\end{eqnarray*}
Adding these up over all $v$ gives
$$0 = \sum_v v(y) \deg v \geq -\sum_v k_v\deg v  - \sum_{v\in S}c_v\deg v  - \sum_{v\in S}v(t)\deg v.$$
Hence the sum $\sum_{v\in S}-v(t)\deg(v)$ is also bounded above by some constant $C_2$ depending only on $F$ and $c(\chi_B)$. We write $S(t)$ for the $|S|$-tuple $(v(t))\in\Z^{|S|}.$

Suppose now that
$\sum_{v\in S}-v(t)\deg(v)\leq C_2.$
Then $S(t)$ belongs to some finite subset of $\Z^{|S|}$ depending only on $F$, $M$, $c(\chi_B)$ and $c$. If $y\in \textrm{Supp}(\FF_S\chi_{tB})\cap \cO_S^\perp$ is nonzero, then $y$ is a global section of some line bundle on $\sc$ that depends only on $S(t)$. Hence $\textrm{Supp}(\FF_S\chi_{tB})\cap \cO_S^\perp$ is contained in a finite subset $U$ of $\cO_S^\perp$ depending only on $F$, $M$, $c(\chi_B)$ and $c$. Write $B$ as a finite disjoint union of translates $B = \cup_b b+c(B)$ of its conductor. Note the number of translates needed to cover $B$ is bounded above in terms of $M$ and $c(\chi_B)$ by taking the volume of $B$. Then $tB$ is a finite union of translates $tb + c(tB)$. Hence the volume of $tB$ is bounded by a constant depending only on $F$, $M$, $c(\chi_B)$ and $c$. By the translation property of the Fourier transform and Lemma \ref{lem:FourCC}, we see that $|\FF_S\chi_{tb + c(tB)}(y)|$ for any $y\in \cO_S^\perp$ depends only on $c(\chi_B)$, $S(t)$ and $y$. Finally we apply Poisson summation \eqref{eq:poisson} to the continuous function $\chi_{tB}$. The sum on the right hand side of \eqref{eq:poisson} is bounded above by a finite sum
$$\sum_b\sum_{y\in U} |\FF_S\chi_{tb + c(tB)}(y)|$$
which depends only on $F$, $M$, $c(\chi_B)$ and $c$.

Finally suppose that $\sum_{v\in S}-v(t)\deg(v)> C_2.$ Then $\textrm{Supp}(\FF_S\chi_{tB})\cap \cO_S^\perp$ contains no nonzero element. Therefore,
$$\#\{tB \cap \cO_S\} = \sum_{x\in \cO_S}\chi_{tB}(x) = \prod_{v\notin S} q_v^{-k_v/2}\FF_S\chi_{tB}(0) = \prod_{v\notin S} q_v^{-k_v/2}\Vol(tB).$$
The second equality in \eqref{eq:homoexp} follows because $\Vol(F_S/\cO_S) = \prod_{v\notin S} q_v^{-k_v/2}$ (\cite[Proposition~3.5.1]{VF}). Note there is no error term in this case.
\end{proof}

We remark that the condition $|t|_v^{1/\deg v}/|t|_{v'}^{1/\deg v'}<c$ is not necessary. All we need in the proof is that a lower bound on $\sum_{v\in S}-v(t)\deg v$ implies a lower bound for each $-v(t)$. For example, the condition $\log |t|_v/\log |t|_{v'}<c$ is also good enough.

For our applications, we need a more general version of
Proposition~\ref{prop:homoexp} where $B$ is an open compact subset of $F_S^n$ and is first hit by some linear transform $g$ belonging to some open compact subset of $\GL_n(F_S)$ and then scaled by some diagonal matrix $t$.

\begin{proposition}\label{prop:skewexp}
 Let $B$ be an open compact subset of $F_S^n$. Let $K$ be an open compact subset of $\GL_n(F_S)$. Let $c$ be a positive real constant. Then for any $g\in K$ and for any $t=
 \diag(t_1,\ldots,t_n)\in\GL_n(F_S)$ such that for any $i=1,\ldots,n$, we have $|t_i|_v^{1/\deg v}/|t_i|_{v'}^{1/\deg v'}<c$ for any $v,v'\in S$,
 \begin{equation}\label{eq:skew}
 \#\{tgB \cap \cO_S^n\} = \Vol_{\cO_S^n}(tgB) + O(\Vol(\proj(tgB))),
 \end{equation}
 where $\Vol_{\cO_S^n}$ is a constant multiple of $\Vol$ such that
$F_S^n/\cO_S^n$ has volume $1$, and $\Vol(\proj(tgB))$ denotes the greatest $\ell$-dimensional volume of any projection of $tgB$ onto a coordinate subspace obtained by equating $n-\ell$ coordinates to zero, where $\ell$ takes all values from $1$ to $n-1$. The implied constant in the second summand depends only on $F$, $B$, $K$ and $c$.
\end{proposition}

\begin{proof}
 The definition of the Fourier transform \eqref{eq:defFourier}
 generalizes to higher dimensions where one replaces $xy$ by
 $x_1y_1+\cdots+x_ny_n$ when $x=(x_1,\ldots,x_n)$, and
 $y=(y_1,\ldots,y_n)$. The Poisson summation formula
 \eqref{eq:poisson} generalizes to
\begin{equation}\label{eq:poissonhigh}
 \sum_{x\in \cO_S^n} f(x) = \left(\prod_{v\notin S} q_v^{-k_v/2}\right)^n\sum_{y\in (\cO_S^\perp)^n} \FF_Sf(y),
\end{equation}
for any continuous function $f$ with continuous Fourier transform. The
notion of conductor also generalizes and we write
$$c(B) = \prod_{v\in S} (\pi_v^{c_{v,1}}\cO_v \times \cdots\times
\pi_v^{c_{v,n}}\cO_v)\subset\prod_{v\in S}K_v^n.$$ As $g$ varies in
the open compact set $K$, the conductor $c(\chi_{gB})$ of the
characteristic function of $gB$ takes only finitely many possibilities
and the number of translates of $c(\chi{gB})$ needed to cover $gB$
also has finitely many possibilities. Moreover, there is a real
constant $M$ that bounds the $S$-norm of every $\cO_S$-coordinate of
$gb$ for any $g\in K$ and any $b\in B$. These are the only dependency
on $K$ and $B$. In what follows, we replace $gB$ by~$B$.

As in the proof of Proposition \ref{prop:homoexp}, we may assume that
each $-v(t_i)$ is bounded below by some constant $C_1$ depending only
on $M$ and $c$. For each $i=1,\ldots,n$, we define the
constants $$C_{2,i} = \sum_v k_v\deg v + \sum_{v\in S}c_{v,i}\deg v.$$
Let $I$ be a subset of $\{1,\ldots,n\}$. Suppose
\begin{eqnarray*}
\sum_{v\in S} -v(t_i)\deg(v) &>& C_2,\quad\forall i\in I\\
\sum_{v\in S} -v(t_i)\deg(v) &\leq& C_2,\quad\forall i\notin I.
\end{eqnarray*}
Then if $(y_1,\ldots,y_n)\in \textrm{Supp}(\FF_S \chi_{tB})\cap
(\cO_S^\perp)^n$, we have $y_i=0$ for all $i\in I$. When
$I=\{1,\ldots,n\}$, then $\textrm{Supp}(\FF_S\chi_{tB})\cap
(\cO_S^\perp)^n$ contains no nonzero element and just as in the proof
of Proposition \ref{prop:homoexp}, we get the main term
$\Vol_{\cO_S^n}(tB)$ in \eqref{eq:skew}.

Suppose now $I$ is a proper subset of $\{1,\ldots,n\}$. Without loss
of generality, we may assume $I = \{i,\ldots,n\}$ for some $i\geq
2$. We write $\proj_{i,\ldots,n}$ for the projection onto the last
$n-i+1$ coordinates and $\proj_{1,\ldots,i-1}$ for the projection on
the first $i-1$ coordinates. Our goal is to show that
$$\#\{tB\cap \cO_S^n\} \leq C_3 \Vol(\proj_{i,\ldots,n}(tB))$$ for
some constant $C_3$ depending only on $F$, the bound of the $S$-norm
of elements of $B$, the conductor $c(B)$ of $B$, and $c$.

Any element $(y_1,\ldots,y_n)$ of $\textrm{Supp}(\FF_S\chi_{tB})\cap
(\cO_S^\perp)^n$ has the property that $y_j = 0$ for any
$j=i,\ldots,n$, and for any $j=1,\ldots,i-1$,
\begin{eqnarray*}
 v(y_j)&\geq& -k_v, \quad\forall v\notin S,\\
 v(y_j)&\geq& -c_{v,j} - v(t_j) - k_v \quad\forall v\in S.
\end{eqnarray*}
As before, all the possible $(i-1)$-tuples $(y_1,\ldots,y_{i-1})$, as
$t$ varies, belong to some finite set $U$ depending only on $F$, $C_1$
and $C_2$.

For any $(x_i,\ldots,x_n)\in \proj_{i,\ldots,n}(tB)$, let
$B(t,x_i,\ldots,x_n)$ denote the (open compact) subset of $F_S^{i-1}$
consisting of elements $(x_1,\ldots,x_{i-1})$ such that
$(x_1,\ldots,x_n)\in tB.$ Then
$\textrm{Supp}(\FF_S\chi_{B(t,x_i,\ldots,x_n)})$ is contained in
$U$. This follows because the conductor of $B(t,x_i,\ldots,x_n)$
contains $\proj_{1,\ldots,i-1}(c(tB))$. Let $C_S$ denote the constant
$\prod_{v\notin S} q_v^{-k_v/2}$. Then
\begin{eqnarray*}
 \#\{tB \cap \cO_S^n\} &=& C_S^n \sum_{y\in U}\FF_S\chi_{tB}(y,0,\ldots,0)\\ &=& C_S^n \sum_{y\in
   U}\int_{\substack{(x_i,\ldots,x_n)\\ \in
     \proj_{i,\ldots,n}(tB)}}\int_{\substack{(x_1,\ldots,x_{i-1})\\ \in
     B(t,x_i,\ldots,x_n)}}\chi_{S}(x_1y_1+\cdots+x_{i-1}y_{i-1})d_Sx\\ &=&
 C_S^n\int_{\substack{(x_i,\ldots,x_n)\\ \in
     \proj_{i,\ldots,n}(tB)}}\sum_{y\in U}\FF_S\chi_{B(t,x_i,\ldots,x_n)}(y)\\ &=&
 C_S^{n-i+1}\int_{\substack{(x_i,\ldots,x_n)\\ \in
     \proj_{i,\ldots,n}(tB)}}\#\{B(t,x_i,\ldots,x_n)\cap
 \cO_S^{i-1}\}d_Sx\\ &\leq&
 C_S^{n-i+1}\int_{\substack{(x_i,\ldots,x_n)\\ \in
     \proj_{i,\ldots,n}(tB)}}\#\{\proj_{1,\ldots,i-1}(tB)\cap
 \cO_S^{i-1}\}d_Sx\\ &=&
 C_S^{n-i+1}\Vol(\proj_{i,\ldots,n}(tB))\#\{\proj_{1,\ldots,i-1}(tB)\cap
 \cO_S^{i-1}\}.
\end{eqnarray*}
Finally the same argument as in the proof of Proposition \ref{prop:homoexp} shows that
$$\#\{\proj_{1,\ldots,i-1}(tB)\cap \cO_S^{i-1}\}\leq C_4$$ for some
constant $C_4$ depending only on $F$, the bound of the $S$-norm of
elements of $B$, the conductor $c(B)$ of $B$, and $c$.  This completes
the proof of Proposition~\ref{prop:skewexp}.
\end{proof}

\section*{Acknowledgments}
The first and third authors were supported by a Simons Investigator
Grant, and we thank the Simons Foundation for their kind support of
this work. It is a pleasure to thank Alireza Golsefidy, M.~S.~Raghunathan, and Jacob Tsimerman for helpful conversations.


\begin{thebibliography}{40}

\bibitem{Artn} E.\ Artin, Questions de base minimale dans la th\'{e}orie des nombres alg\'ebriques, {\it Collected papers of Emil Artin}, Addison-Wesley, Reading, MA, 1965, 229--321.

\bibitem{BII} M.\ Bhargava, Higher composition laws.\ II.\ On cubic analogues of Gauss composition, {\it Ann.\ of Math.\ $($2$)$} {\bf 159}(2004), no.\ 2, 865--886.

\bibitem{BIII} M.\ Bhargava, Higher composition laws.\ III.\ The parametrization of quartic rings, {\it Ann.\ of Math.\ $($2$)$} {\bf 159}(2004), no.\ 3,  1329--1360.

\bibitem{BIV} M.\ Bhargava, Higher composition laws.\ IV.\ The parametrization of quintic rings, {\it Ann.\ of Math.\ $($2$)$} {\bf 167}(2008), no.\ 1,  53--94.

\bibitem{Bmass} M.\ Bhargava, Mass formulae for extensions of local fields, and conjectures on the density of number field discriminants, {\it Int.\ Math.\ Res.\ Not.} no.\ 17 (2007), 20 pp.

\bibitem{hasse}
M.\ Bhargava,  A positive proportion of plane cubics fail the Hasse principle,
\url{http://arxiv.org/abs/1402.1131}.

\bibitem{B}
M.\ Bhargava,  Most hyperelliptic curves over $\Q$ have no rational points,
\url{http://arxiv.org/abs/1308.0395}.

\bibitem{dodqf} M.\ Bhargava, The density of discriminants of quartic
  rings and fields,  {\it Ann.\ of Math.\ $($2$)$} {\bf 162} (2005), no.\ 2,  1031--1063.

\bibitem{dodpf} M.\ Bhargava, The density of discriminants of quintic
  rings and fields, {\it Ann.\ of Math.\ $($2$)$} {\bf 172} (2010), no.\ 3,  1559--1591.



\bibitem{Bgeosieve} M.\ Bhargava, The geometric sieve and the density
  of squarefree values of polynomial discriminants and other invariant
  polynomials, \url{http://arxiv.org/abs/1402.0031}.





%\bibitem{squarefree}
%M.\ Bhargava,
%The geometric sieve and the density of squarefree values of invariant polynomials,
%{\tt http://arxiv.org/abs/1402.0031}.


\bibitem{BG1}
M.\ Bhargava and B.\ Gross,
The average size of the $2$-Selmer group of the Jacobians of hyperelliptic curves with
a rational Weierstrass point, {\it Automorphic Representations and $L$-functions}, {\it TIFR Studies
in Math.} {\bf 22} (2013), 23--91.

\bibitem{BGW} M.\ Bhargava, B.\ Gross, and X.\ Wang, A positive
  proportion of locally soluble hyperelliptic curves over Q have no
  rational points over any odd degree extension, {\it
    J.\ Amer.\ Math.\ Soc.\ } {\bf 30} (2017), 451--493.

%\bibitem{BH}
%M.\ Bhargava and W.\ Ho, Coregular spaces and genus one curves,
%{\tt http://arxiv.org/abs/1306.4424}.

\bibitem{BH} M.\ Bhargava and W.\ Ho, Coregular spaces and genus one curves, {\it Camb.\ J.\ Math.} {\bf 4} (2016), no.\ 1, 1--119.

\bibitem{BH1} M.\ Bhargava and W.\ Ho, On average sizes of Selmer groups and ranks in families of elliptic curves having marked points, \url{https://arxiv.org/abs/2207.03309} 

\bibitem{BS2} M.\ Bhargava and A.\ Shankar, Binary quartic forms
  having bounded invariants, and the boundedness of the average rank
  of elliptic curves, {\it Ann.\ of Math.\ (2)} {\bf 181} (2015), no.\ 1, 191--242.

\bibitem{TC} M.\ Bhargava and A.\ Shankar, Ternary cubic forms having
  bounded invariants and the existence of a positive proportion of
  elliptic curves having rank 0, {\it Ann.\ of Math.\ (2)} {\bf 181} (2015), no.\ 2, 587--621.

\bibitem{foursel} M.\ Bhargava and A.\ Shankar, The average number of
  elements in the $4$-Selmer groups of elliptic curves is $7$,
  \url{http://arxiv.org/abs/1312.7333}\,.

\bibitem{fivesel} M.\ Bhargava and A.\ Shankar, The average size of the
$5$-Selmer group of elliptic curves is $6$, and the average rank is less than 1,
\url{http://arxiv.org/abs/1312.7859}\,.

\bibitem{BST} M.\ Bhargava, A.\ Shankar and J.\ Tsimerman, On the Davenport-Heilbronn theorems and second order terms, {\it Invent.\ Math.} {\bf 193} (2013), no.\ 2, 439--499.

\bibitem{BSW} M.\ Bhargava, A.\ Shankar and X.\ Wang, Geometry-of-numbers methods over global fields II: Coregular representations, in preparation.

\bibitem{rankone}
M.\ Bhargava and C.\ Skinner, A positive proportion of elliptic curves over $\Q$ have rank one, {\it J.\ Ramanujan Math.\ Soc.\ } {\bf 29} (2014), no.\ 2, 221--242.

\bibitem{bsddensity}
M.\ Bhargava, C.\ Skinner, and W.\ Zhang, A majority of elliptic curves over $\Q$ satisfy the Birch and Swinnerton-Dyer Conjecture, \url{http://arxiv.org/abs/1407.1826}.

\bibitem{BV}
M.\ Bhargava and I.\ Varma, The mean number of 3-torsion elements in the class groups and ideal groups of quadratic orders, {\it Proc.\ Lond.\ Math.\ Soc.\ $($2$)$} {\bf 112} (2016), no.\ 2, 235--266.

\bibitem{BV2}
M.\ Bhargava and I.\ Varma, On the mean number of 2-torsion elements in the class groups, narrow class groups, and ideal groups of cubic orders and fields, {Duke Math.\ J.\ } {\bf 10} (2015), 1911--1933.

\bibitem{Brl}
A.\ Borel, Some finiteness properties of adele groups over number fields, {\it Publ. Math. IHES} {\bf 16} (1983), 5--30.

\bibitem{BP}
A.\ Borel, G.\ Prasad, Finiteness thoerems for discrete subgroups of bounded covolume in semi-simple groups, {\it Publ. Math. IHES} {\bf 69} (1989), 119--171.

\bibitem{Cnrd}
B.\ Conrad, Finiteness theorems for algebraic groups over function fields, {\it Compos. Math.} {\bf 148} no.\ 2 (2012), 555--639.

\bibitem{DW} B.\ Datskovsky and D.\ Wright, Density of discriminants of cubic extensions, {\it J.\ Reine Angew.\ } {\bf 386} (1998), 116--138.

\bibitem{Lipschitz}
H.\ Davenport, On a principle of Lipschitz,
{\it J.\ London Math.\ Soc.} {\bf 26} (1951), 179--183.
Corrigendum: ``On a principle of Lipschitz '',  {\it J.\ London Math.\
  Soc.} {\bf 39} (1964), 580.

\bibitem{DH} H.\ Davenport and H.\ Heilbronn, On the density of discriminants of cubic fields, II, {\it Proc.\ Roy.\ Soc.\ London Ser.\ A} {\bf 322} no.\ 1551 (1971), 405--420.

\bibitem{DF}
B.\ N.\  Delone and D.\ K.\ Faddeev, {\it The theory of irrationalities
of the third degree}, AMS Translations of Mathematical Monographs
{\bf 10}, 1964.

\bibitem{AWD}
A.\ W.\ Deng, Rational points on weighted projective spaces, \url{http://arxiv.org/abs/math/9812082}.

\bibitem{G}
B.\ Gross, On the motive of a reductive group, {\it Invent.\ math.\ } {\bf 130} (1997), no.\ 2, 287--313.

\bibitem{JP} M.\ Jarden and G.\ Prasad, Appendix on the discriminant quotient formula for global field, {\it Publ. Math. IHES} {\bf 69} (1989), 115--117.

\bibitem{KW} A.\ Kable and D.\ Wright, Uniform distribution of the Steinitz invariants of quadratic and cubic extensions, {\it Compos.\ Math.} {\bf 142} no.\ 1 (2006), 84--100.

\bibitem{KL} Z.\ Klagsbrun, Davenport--Heilbronn Theorems for Quotients of Class Groups, \url{http://arxiv.org/abs/1701.02834}.

\bibitem{Mlnr}
J.\ Milnor, {\it Introduction to Algebraic $K$-theory}, Princeton University Press and University of Tokyo Press, 1971.

\bibitem{Os} J.\ Oesterle, Nombres de Tamagawa et groupes unipotents en caract\'eristique $p$, {\it Invent.\ Math.} {\bf 78} (1984), no.\ 1, 13--88.

\bibitem{PS2}
B.\ Poonen, M.\ Stoll,
Most odd degree hyperelliptic curves have only one rational point,
{\it Ann.\ of Math.\ $($2$)$} {\bf 180} (2014), no.\ 3, 1137--1166.

\bibitem{SK}
M.\ Sato and T.\ Kimura, A classification of irreducible
prehomogeneous vector spaces and their relative invariants,
{\it Nagoya Math.\ J.} {\bf 65} (1977), 1--155.

\bibitem{Smass} J.\-P.\ Serre, Une ``formule de masse'' pour les extensions totalement ramifi\'es de degr\'e donn\'e d'un corps local, {\it C.\ R.\ Acad.\ Sci.\ Paris S\'er.\ A-B} {\bf 286} no.\ 22 (1978), A1031--A1036.

\bibitem{SW}
A.\ Shankar, and X.\ Wang, Rational points on hyperelliptic curves having a
marked non-Weierstrass point, {\it Compos.\ Math.} {\bf 154} (2018), 188--222.

\bibitem{ST} A.\ Shankar. The average rank of elliptic curves over
  number fields, Ph.D thesis, Princeton 2012.

\bibitem{AJ} A.\ Shankar and J.\ Tsimerman, Counting $S_5$-fields with
  a power saving error term, {\it Forum Math.\ Sigma} \textbf{2}
  (2014), e13.

  
\bibitem{Sp} T.\ A.\ Springer, Reduction theory over global fields,
  {\it Proc.\ Indian Acad.\ Sci.\ (Math.\ Sci.\ )} {\bf 104} (1994),
  207--216.


\bibitem{Th}
J.\ Thorne, $E_6$ and the arithmetic of a family of non-hyperelliptic curves of genus 3, {\it Forum Math.\ Pi}, Vol.\ 3 (2015), e1.

\bibitem{VF} M.\ van Frankenhuijsen, The Riemann hypothesis for
  function fields, {\it London Mathematics Society Student Texts}
  \textbf{80}, Cambridge University Press 2014.

\bibitem{Weil} A.\ Weil, {\it Adeles and algebraic groups}, Birkh\"auser, 1982.

\bibitem{WoodFinite} M.\ M.\ Wood, The Distribution of the Number of Points on Trigonal Curves over $\F_q$, {\it Int. Math. Res. Not.} {\bf 23} (2012), 5444--5456

\bibitem{WoodThesis} M.\ M.\ Wood, Moduli spaces for rings and ideals, Ph.D.\ Thesis, Princeton University, June 2009.

  \bibitem{WoodQuartic} M.\ M.\ Wood, Parametrizing quartic algebras
    over an arbitrary base, {\it Algebr.\ Number Theory} {\bf
      5-8} (2011), 1069--1094.

\bibitem{MWsp} M.\ M.\ Wood, How to determine the splitting type of a prime, unpublished note. %available at {\tt https://people.math.harvard.edu/mmwood/Splitting.pdf}.

\bibitem{WY}
D.\ J.\ Wright and A.\ Yukie, Prehomogeneous vector spaces and field
extensions, {\it Invent.\ Math.} {\bf 110} (1992), 283--314.

\end{thebibliography}
\end{document}